\documentclass[a4paper,reqno, 10pt]{amsart}

\usepackage{amsmath,amssymb,amsfonts,amsthm, mathrsfs}
\usepackage[latin1]{inputenc}
\usepackage{lmodern}
\usepackage{tikz}

\usepackage{verbatim,wasysym,cite}

\usepackage[latin1]{inputenc}
\usepackage{microtype}
\usepackage{color,enumitem,graphicx}
\usepackage[colorlinks=true,urlcolor=blue, citecolor=red,linkcolor=blue,linktocpage,pdfpagelabels, bookmarksnumbered,bookmarksopen]{hyperref}

\numberwithin{equation}{section}
\newtheorem{theorem}{Theorem}[section]
\newtheorem{lemma}[theorem]{Lemma}
\newtheorem{remark}[theorem]{Remark}
\newtheorem{definition}[theorem]{Definition}
\newtheorem{proposition}[theorem]{Proposition}
\newtheorem{corollary}[theorem]{Corollary}

\newcommand{\qtq}[1]{\quad\text{#1}\quad}

\newcommand{\R}{\mathbb{R}}
\newcommand{\C}{\mathbb{C}}

\newcommand{\eps}{\varepsilon}

\numberwithin{equation}{section}
\numberwithin{theorem}{section}
\numberwithin{figure}{section}
\numberwithin{table}{section}

\def\({\left(}
\def\){\right)}
\def\<{\left\langle}
\def\>{\right\rangle}

\def\eps{\varepsilon}

\let\Re=\undefined\DeclareMathOperator*{\Re}{Re}
\let\Im=\undefined\DeclareMathOperator*{\Im}{Im}

\begin{document}
	
\title[4D mass-energy double critical NLS]{Threshold dynamics for the 4$d$ mass-energy double critical NLS}

\author[Alex H. Ardila]{Alex H. Ardila}
\address{Alex H. Ardila
\newline \indent Department of Mathematics, Universidad del Valle, Colombia} 
\email{ardila@impa.br}

\author[Zuyu Ma]{Zuyu Ma}
\address{Zuyu Ma
\newline \indent  The Graduate School of China Academy of Engineering Physics, Beijing 100088,\  China}
\email{mazuyu23@gscaep.ac.cn}
	
\author[Jason Murphy]{Jason Murphy}
\address{Jason Murphy
\newline \indent  Department of Mathematics, University of Oregon, Eugene, OR 97403,  USA}
\email{jamu@uoregon.edu}
	
\author[Jiqiang Zheng]{Jiqiang Zheng}
\address{Jiqiang Zheng
\newline \indent  Institute of Applied Physics and Computational Mathematics,  \ Beijing, \ 100088, \ China}
\email{zheng\_jiqiang@iapcm.ac.cn, zhengjiqiang@gmail.com}
	
\subjclass[2020]{35Q55}
\keywords{energy-critical, mass-critical, threshold dynamics, scattering}

\begin{abstract}
We consider the $4d$ mass-energy double critical NLS
\[
(i\partial_t+\Delta)u = -|u|^2 u + |u| u.
\]
In \cite{Luo2024, ChengMiaoZhao2016}, the authors established a scattering/blowup dichotomy for solutions satisfying the energy constraint $E(u_0)< E^c(W)$, where $W$ is the energy-critical NLS ground state and $E^c$ is the energy for the underlying cubic NLS.  We prove that the scattering/blowup dichotomy persists even at the energy threshold $E(u_0)=E^c(W)$.   
\end{abstract}

\maketitle

\section{Introduction}\label{sec:intro}
We consider the nonlinear Schr\"odinger equation with combined nonlinearity of the form
\begin{equation}\label{NLS}\tag{DCNLS}
\begin{cases}
(i\partial_{t}+\Delta)u=-|u|^{2}u+|u|u,\\
u|_{t=0}=u_{0}\in H^{1}(\R^{4}),
\end{cases}
\end{equation}
with $u:\R\times\R^4\to\C$. This is the Hamiltonian flow corresponding to the following conserved \emph{energy}, which is finite for $H^1$ data: 
\[
E(u)=\int_{\R^{4}}\Big(\tfrac{1}{2}|\nabla u|^{2}-\tfrac{1}{4}|u|^{4}+\tfrac{1}{3}|u|^{3}\Big)\,dx.
\]
Solutions additionally conserve the \emph{mass}, defined by
\[
M(u)=\int_{\R^4} |u|^2\,dx.
\]

Equation \eqref{NLS} models physical systems such as pulse propagation in non-centrosymmetric optical waveguides and the axial dynamics of cigar-shaped Bose-Einstein condensates, where it describes the interplay between repulsive contact interactions (cubic nonlinearity) and dipole-dipole attractions (quadratic nonlinearity). Furthermore, the nuclear hydrodynamics equation with effective Skyrme forces is known to reduce quasi-classically to the \eqref{NLS} form. For further physical context, we refer the reader to \cite{MateoDelgado2008, SinhaSantos2007} and references therein.

An important related model is the $4d$ focusing energy-critical NLS
\begin{equation}\label{ECgNLS}\tag{NLS}
\begin{cases} 
(i\partial_{t}+\Delta) u=-|u|^{2}u,\\
u|_{t=0}=u_{0}\in \dot{H}^{1}(\R^{4}),
\end{cases} 
\end{equation}
with associated energy
\[
E^{c}(u)=\int_{\R^{4}}\Big(\tfrac{1}{2}|\nabla u|^{2}-\tfrac{1}{4}|u|^{4}\Big)\,dx.
\]
	
The equation \eqref{ECgNLS} has static solutions of the form $u(t,x)=W(x)$, where $W$ solves the elliptic equation
\begin{align}\label{ellipC}
-\Delta W=|W|^{2}W.
\end{align}
An explicit solution, known as the \emph{ground state}, is given by 
\[
W(x):= \left(1+\tfrac{|x|^{2}}{8}\right)^{-1}.
\]

In recent years, the study of dynamics for NLS with combined nonlinearities has attracted significant attention. A systematic study was initiated by Tao, Vi\c{s}an and Zhang in the work \cite{TaoVisanZhang2007}. For the NLS with defocusing-defocusing double critical regime, they showed (using the interaction Morawetz estimate) that solutions are global and scatter in time for any initial data in $H^1$. Subsequently, Akahori, Ibrahim, Kikuchi and Nawa~\cite{AkaIbraKikuNawa2013} formulated a sharp scattering threshold for the case of focusing energy-critical NLS perturbed by a focusing intercritical nonlinearity, and studied the solution dynamics below this threshold. Let us also mention the works \cite{MiaoChangxing2013, KillipOhPoVi2017, Luo2022, MiaoChangxing2015,MiaoChangxing2017}, which have studied the global dynamics of NLS with combined nonlinearities in different regimes.

For equation \eqref{NLS}, the behavior of solutions satisfying the energy constraint $E(u)<E^c(W)$ is now well-understood.  This problem was first considered in \cite{ChengMiaoZhao2016} in the radial setting, with the radial assumption later being removed in \cite{Luo2024} using the so-called double track profile decomposition. 

\begin{theorem}[Sub-threshold dynamics, \cite{Luo2024, ChengMiaoZhao2016 }]\label{Th1}
Let $u_{0}\in H^{1}(\R^{4})$, and let $u$ be the corresponding maximal-lifespan solution to \eqref{NLS} with $u|_{t=0}=u_{0}$.
\begin{enumerate}[label=\rm{(\roman*)}]
\item $($Subcritical$)$ If $u_{0}$ obeys
\begin{equation}\label{Thres00}
E(u_{0})<E^{c}(W) \quad \text{and} \quad \|\nabla u_{0}\|_{L^{2}}<\|\nabla W\|_{L^{2}},
\end{equation}
then $u$ is global, $u\in L_{t,x}^{6}(\R\times\R^4)\cap L_{t,x}^{3}(\R\times\R^4)$, and scatters\footnote{Here \emph{scattering} as $t\to\pm\infty$ refers to the statement that
\[
\lim_{t\to\pm\infty}\|u(t)-e^{it\Delta}u_\pm\|_{H^1}=0\qtq{for some}u_\pm\in H^1.
\]}	 as $t\to\pm\infty$. 
\item$($Supercritical$)$ If $u_{0}$ is radial and obeys
\begin{equation}\label{BlowC00}
E(u_{0})<E^{c}(W) \quad \text{and} \quad \|\nabla u_{0}\|_{L^{2}}>\|\nabla W\|_{L^{2}},
\end{equation}
then $u$ blows up in both time directions in finite time.
\end{enumerate}
\end{theorem}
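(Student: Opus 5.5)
The plan follows the standard Kenig--Merle route, adapted to the double critical structure. The first step is variational. From the sharp Sobolev inequality $\|u\|_{L^4}\le C_4\|\nabla u\|_{L^2}$, with equality for $W$, we get $E(u)\ge E^c(u)\ge f(\|\nabla u\|_{L^2}^2)$, where $f(y)=\tfrac12 y-\tfrac{C_4^4}{4}y^2$. The maximum of $f$ is $E^c(W)$, attained at $y=\|\nabla W\|_{L^2}^2$. Hence under $E(u_0)\le E^c(W)-\delta$, a continuity argument shows two things:
\begin{itemize}[label={}]
\item the conditions \eqref{Thres00} and \eqref{BlowC00} are invariant under the flow;
\item in the subcritical case $\|\nabla u(t)\|_{L^2}^2\le(1-\delta')\|\nabla W\|_{L^2}^2$, and in the supercritical case $\|\nabla u(t)\|_{L^2}^2\ge(1+\delta')\|\nabla W\|_{L^2}^2$.
\end{itemize}
In the subcritical case we also obtain coercivity of the virial functional
\[
V(u)=\|\nabla u\|_2^2-\|u\|_4^4+\tfrac{2}{3}\|u\|_3^3 \gtrsim_\delta \|\nabla u\|_2^2 .
\]
This uses that the $|u|^3$ term enters $V$ with a favorable sign. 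Combined with mass conservation, the solution stays uniformly bounded in $H^1$.

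For part (ii), radial blowup, we use the localized virial identity with a radial cutoff $\phi_R$:
\[
\partial_t^2\int\phi_R|u|^2 = 8\|\nabla u\|_2^2-8\|u\|_4^4+\tfrac{16}{3}\|u\|_3^3+\text{error}_R .
\]
Rewriting via the energy, the main term is $16E(u)-8\|\nabla u\|_2^2-\tfrac{8}{3}\|u\|_3^3\cdot(\ldots)$. A cleaner form is $8\|\nabla u\|_2^2-8\|u\|_4^4+\tfrac{16}{3}\|u\|_3^3=32E-8\|\nabla u\|_2^2-\tfrac{16}{3}\|u\|_3^3\cdot\tfrac12$. The supercritical gap then bounds this main term by $-c\delta'$. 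The radial Strauss inequality, together with mass conservation, controls the exterior error terms for $R$ large. A Glassey convexity argument then gives finite-time blowup in both directions.

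For part (i) we argue by contradiction through a minimal blowup solution. The energy-critical NLS theory of Dodson gives global spacetime bounds for \eqref{ECgNLS} below $W$. Using long-time perturbation theory for \eqref{NLS} in $L^6_{t,x}\cap L^3_{t,x}$-type norms, we assume scattering fails at some critical energy $E_c<E^c(W)$. We then extract a minimal nonscattering solution via a linear profile decomposition in $H^1$. Because the problem is both mass- and energy-critical, the profiles come in two tracks, and the profile decomposition must handle both separately:
\begin{itemize}[label={}]
\item \emph{$\dot H^1$-concentrating profiles} (scales $\lambda_n\to0$). These are approximated by solutions of \eqref{ECgNLS}, since the $|u|u$ term is negligible at small scales. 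They scatter by Dodson's theorem because their energy is below $E^c(W)$ with kinetic energy below $W$.
\item \emph{$L^2$-spreading profiles} ($\lambda_n\to\infty$). These are approximated by the defocusing mass-critical NLS $(i\partial_t+\Delta)u=|u|u$, which scatters by Dodson's mass-critical theorem.
\end{itemize}
Energy decoupling and the variational bounds force a single profile at scale $1$. This yields an almost periodic solution, compact in $H^1$ modulo translations. Finally, a localized virial argument with the coercivity of $V$ rules it out. Since there is no scaling in $H^1$, compactness gives $x(t)=o(t)$ by Galilean normalization (zero momentum). Then $\int_0^T V(u)\,dt\lesssim R$ while the left side is $\gtrsim T$, a contradiction.

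The main obstacle is the profile decomposition and nonlinear profile step. We must verify that the small- and large-scale nonlinear profiles are good approximate solutions of \eqref{NLS} in the relevant Strichartz norms. In particular, we must show the cross terms between the two critical nonlinearities vanish asymptotically: $|u|u$ on concentrating profiles and $|u|^2u$ on spreading profiles.
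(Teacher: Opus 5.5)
\textbf{Verdict: genuine gap in part (i).} The paper imports this theorem from the cited works rather than proving it. Your part (ii) is sound: energy trapping, the localized virial with Strauss' radial bound, and Glassey's argument. One slip there: the correct identity is $8\|\nabla u\|_2^2-8\|u\|_4^4+\tfrac{16}{3}\|u\|_3^3=32E(u)-8\|\nabla u\|_2^2-\tfrac{16}{3}\|u\|_3^3$, with no factor $\tfrac12$. This is harmless, since the cubic term has the good sign. The architecture of part (i) is also the right one (cf.\ Theorem~\ref{Profi}, Lemmas~\ref{P:embedding} and~\ref{nonlinearPII}):
\begin{itemize}
\item a double-track profile decomposition;
\item concentrating profiles handled by Dodson's energy-critical theorem;
\item spreading profiles handled by the mass-critical theorem.
\end{itemize}
The gap is your sentence ``energy decoupling and the variational bounds force a single profile at scale 1. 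This yields an almost periodic solution, compact in $H^1$ modulo translations.'' Energy decoupling only shows that one profile (with $\lambda_n\equiv1$, $t_n\equiv0$) carries all the energy. It says nothing about mass. Spreading profiles with $\lambda_n^j\to\infty$ and $\xi_n^j\to0$ have energy $o(1)$ but fixed mass. The remainder vanishes in $\dot H^1$ and in $L^6_{t,x}\cap L^3_{t,x}$ under the free flow, but not in $L^2$. So minimizing energy gives a critical element that is precompact only in $\dot H^1$ modulo translations and merely bounded in $L^2$. The paper records exactly this phenomenon (``$R_n\to 0$ strongly in $\dot H^1$ (but not necessarily in $L^2$)''), and it is why the paper introduces the minimal mass $m_c$ in Proposition~\ref{P6}.

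With only $\dot H^1$-compactness, your rigidity step fails at ``compactness gives $x(t)=o(t)$''. Two parts of it are fine. Zero momentum via a Galilean boost works, because the boost lowers both $E$ and $\|\nabla u\|_2$, contradicting minimality. The virial tails are also fine, e.g.\ $\int_\Omega|u|^3\le\|u\|_{L^2}\|u\|_{L^4(\Omega)}^2$. But the standard proof of $x(t)=o(t)$ compares the localized center of mass $\int x\,\psi_R|u|^2\,dx$ with $x(t)M(u)$. That comparison needs $\sup_t\int_{|x-x(t)|>R_0}|u|^2\,dx\to0$ as $R_0\to\infty$, and mass that has leaked far from $x(t)$ destroys it.

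Relatedly, you cannot induct on energy alone. Superpose many widely separated, highly spread-out bumps $\lambda^{-2}\phi(\cdot/\lambda-y_k)$. This gives data of arbitrarily small energy whose solutions have arbitrarily large $L^3_{t,x}$ norm. So the bounds, and even the small-energy base case (obtained by perturbing off the mass-critical flow), must depend on the mass.

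There are two ways to close the gap:
\begin{itemize}
\item Induct on a quantity strictly increasing in both mass and energy, or, among minimal-energy blowup solutions, minimize the mass as in Part~1 of the proof of Proposition~\ref{P6}. This forces the leftover mass to vanish and yields genuine $H^1$-compactness, after which your Galilean/virial rigidity goes through.
\item Bypass $x(t)$ with an interaction Morawetz estimate, as in the proof of Proposition~\ref{P5}. That argument uses only $\dot H^1$-compactness with $\lambda(t)\equiv1$ and $u\in L^2$.
\end{itemize}
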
	

Our goal is to study the dynamics of \eqref{NLS} at the energy threshold $E(u_0)=E^c(W)$. In particular, we prove that scattering in the subcritical case continues to hold even at the threshold energy. 
	
\begin{theorem}[Threshold dynamics]\label{Th2}  Let $u_{0}\in H^{1}(\R^4)$  and let $u$ be the corresponding maximal-lifespan solution to \eqref{NLS} with $u|_{t=0}=u_0$. If $u_{0}$ obeys
\begin{equation}\label{Thres}
E(u_{0})=E^{c}(W) \quad \text{and} \quad \|\nabla u_{0}\|_{L^{2}}<\|\nabla W\|_{L^{2}},
\end{equation}
then $u$ is global, $u\in L_{t,x}^{6}(\R\times\R^4)\cap L_{t,x}^{3}(\R\times\R^4)$, and scatters as $t\to\pm\infty$.
\end{theorem}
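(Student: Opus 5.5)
We argue by contradiction using the concentration-compactness/rigidity scheme, adapted to the threshold. The first step is a variational analysis. For $u_0$ satisfying \eqref{Thres}, the sharp Sobolev inequality $\|f\|_{L^4}\le C_4\|\nabla f\|_{L^2}$ (optimized by $W$) together with $\tfrac13\|u\|_{L^3}^3>0$ gives
\[
E^c(u(t)) < E(u(t)) = E^c(W)
\]
for all $t$. Hence $\|\nabla u(t)\|_{L^2}<\|\nabla W\|_{L^2}$ on the lifespan by continuity, and the solution is global in $H^1$ with uniform kinetic energy bounds. The quadratic term is what makes the threshold \emph{strictly} subcritical for the cubic problem. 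Quantitatively, define
\[
\delta(t)=\|\nabla W\|_{L^2}^2-\|\nabla u(t)\|_{L^2}^2 .
\]
We then get a coercivity estimate for the virial functional $V(u)=\|\nabla u\|_2^2-\|u\|_4^4+\tfrac34\|u\|_3^3$ of the form
\[
V(u)\gtrsim \min\{\delta,\ \|\nabla u\|_2^2\}+\|u\|_{L^3}^3 .
\]
The $\|u\|_3^3$ term is always strictly positive unless $u\equiv0$.

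The key point is that failure of scattering at threshold can only happen through a profile that ``looks like $W$''. Suppose $u$ does not scatter forward. Run the double-track profile decomposition of \cite{Luo2024} on a sequence $u(t_n)$. Any profile living at a non-degenerate scale (mass-energy both relevant) has energy strictly below $E^c(W)$ once any other nontrivial piece is present. In that case, Theorem~\ref{Th1} and the stability theory give scattering, a contradiction. Therefore there is a single profile. If its scale is non-degenerate, the solution is almost periodic modulo translations in $H^1$. If its scale is concentrating ($\lambda_n\to0$), the profile is an $\dot H^1$ solution of \eqref{ECgNLS} with $E^c=E^c(W)$ and $\|\nabla\cdot\|_2\le\|\nabla W\|_2$. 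Since the mass stays bounded while $\lambda_n\to0$, the $L^3$ term vanishes in the limit. We must exclude this concentrating case, which occurs only if $\delta(t_n)\to0$. So $\delta(t)$ is the modulation parameter to control.

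The rigidity step combines a localized virial argument with modulation analysis near $W$. When $\delta(t)\ge\delta_0$, coercivity gives $V(u(t))\gtrsim_{\delta_0}\|u(t)\|_3^3+\|\nabla u\|^2_2$. When $\delta(t)$ is small, we use modulation: write $u(t)=e^{i\theta}\lambda^{-1}(W+v)(\cdot/\lambda-x)$ with $\|\nabla v\|_2\sim\delta$. Then $E=E^c(W)$ forces $\tfrac13\|u\|_3^3\lesssim\delta^2$. On the other hand, Hölder/interpolation with mass conservation gives $\|u\|_3^3\gtrsim M(u_0)^{-?}\lambda^{-1}$-type bounds, since $W\notin L^2$ but $W\in L^3$ in 4D. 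Concretely, $\|\lambda^{-1}W(\cdot/\lambda)\|_{L^3}^3=\lambda\|W\|_3^3$. So $\lambda\lesssim\delta^2$, and bounded mass with $W\notin L^2$ gives $\lambda\gtrsim$ something depending on $\delta$ via truncation. This should show that $\delta$ cannot go to zero: the mass of $\lambda^{-1}W(\cdot/\lambda)$ on a ball of radius $R\lambda$ is $\sim\lambda^2\log R$, which is compatible. This balance is the main obstacle. I would first try to prove $\inf_t\delta(t)>0$ directly from $E^c(u)=E^c(W)-\tfrac13\|u\|_3^3$ and a lower bound on $\|u\|_3$ along the almost-periodic orbit coming from compactness in $L^2$ (no mass escapes, so $\|u\|_3$ cannot degenerate).

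Granting $\inf\delta>0$, the solution is almost periodic in $H^1$ with a bounded frequency scale. The standard truncated virial identity with $\partial_t^2\int\phi_R|u|^2=4V(u)+o_R(1)$ then gives
\[
\int_0^T\|u\|_{L^3}^3\,dt\lesssim R\sup_t\|u\|_{L^2}\|\nabla u\|_{L^2}.
\]
Combined with $\|u(t)\|_{L^3}\gtrsim1$ (from compactness and $u\ne0$), this is a contradiction for $T\gg R$. The spatial center $x(t)$ is controlled using zero momentum after a Galilean reduction, which is available since the equation is Galilean invariant; the reduction only lowers the energy. Hence $u\in L^6_{t,x}\cap L^3_{t,x}$ globally and scatters in both directions by time reversal.
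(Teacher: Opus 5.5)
The gap is the case $\delta(u(t_n))\to0$ with the scale concentrating, and the route you propose for it cannot work. You correctly isolate this as the dangerous scenario, and you have the right ingredients: $\|u\|_{L^3}^3\lesssim\delta^2$, and a bound tying $\delta$ to the scale as in \eqref{mo4}. But you never exclude it.

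\textbf{Why $L^2$ compactness is unavailable.} You want a uniform lower bound on $\|u(t)\|_{L^3}$ from ``compactness in $L^2$''. For a single threshold blowup solution, the profile decomposition only gives $u(t_n)=\phi(\cdot-x_n)+R_n$, with $R_n\to0$ in $\dot H^1$ and $e^{it\Delta}R_n\to0$ in $L^3_{t,x}\cap L^6_{t,x}$. The remainder $R_n$ may keep a dispersed part of the mass. So you get precompactness in $\dot H^1$ only, not almost periodicity in $H^1$.

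\textbf{Why the argument is circular.} Along any sequence with $\delta(u(t_n))\to0$, modulation forces the scale $\mu(t_n)\to\infty$ and $\|u(t_n)\|_{L^3}^3\lesssim\delta(u(t_n))^2\to0$. The bubble carries vanishing mass on every fixed ball, so $H^1$-precompactness modulo translations fails exactly along such sequences. A uniform lower bound on $\|u\|_{L^3}$ is therefore equivalent to the statement $\inf_t\delta>0$ you are trying to prove. Your own remark that the $\lambda^2\log R$ mass balance is ``compatible'' confirms that conservation laws alone cannot rule this case out.

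\textbf{What the paper does instead.} It argues in the opposite direction, in three steps.
\begin{enumerate}
\item An interaction Morawetz estimate shows that $\inf_t\delta(u(t))>0$ is impossible. This estimate needs only $L^2$ and $\dot H^1$ bounds and is insensitive to $x(t)$. Hence every $u\in\mathcal B$ approaches the ground-state orbit along some $t_n\to\infty$.
\item The minimal blowup mass $m_c$ upgrades $\dot H^1$ to $H^1$ convergence at the exit times where $\delta=\eta_*$. A profile of mass $<m_c$ would scatter, and by stability in the form of Remark~\ref{R:stabi} so would the solution. This produces an element of $\mathcal B$ with $\sup_{t\ge0}\delta(u(t))\le\eta_*$.
\item For that solution, a modulated virial bootstrap shows that the modulation scale and center stay nearly constant. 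It uses $|I_R[u]|\lesssim R^2\delta$ and $\frac{d}{dt}I_R[u]\gtrsim\delta$, with $R$ comparable to the inverse modulation scale, and treats the mass-critical term $\int\Delta[w_R]|u|^3$ perturbatively via $\|u\|_{L^3}^3\lesssim\delta^2$. Near-constancy of the scale contradicts \eqref{mo4} along $\delta(u(t_n))\to0$.
\end{enumerate}
Neither the minimal-mass reduction nor this small-modulation rigidity appears in your proposal.

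\textbf{Smaller points.}
\begin{itemize}
\item Uniform kinetic-energy bounds do not give global existence for an energy-critical problem. The paper needs Proposition~\ref{P3}, proved via compactness and a localized-mass argument.
\item The claim that concentration forces $\delta\to0$ needs the threshold classification for \eqref{ECgNLS} from \cite{MaMiaoMurphyZheng2025}.
\item The zero-momentum argument for $x(t)=o(t)$ requires tightness of the mass, which again presupposes the $H^1$ compactness you lack.
\end{itemize}
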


By applying essentially the same argument developed in \cite[Section 4]{AJZ}, we can also show that if a solution $u$ to \eqref{NLS} with radial initial data $u_0$ satisfies the condition 
\begin{equation}\label{ThresBlo}
E(u_0) = E^c(W) \quad \text{and} \quad \|\nabla u_0\|_{L^2} > \|\nabla W\|_{L^2},
\end{equation}
then the solution $u$ blows up in both time directions in finite time. We also expect that an analogous result holds for the equation
\begin{equation*}
\begin{cases}
(i\partial_{t} + \Delta) u = -|u|^{\frac{4}{d-2}}u + |u|^{\frac{4}{d}}u,\\
u|_{t=0}=u_{0} \in H^{1}(\mathbb{R}^{d})
\end{cases}
\end{equation*}
in dimensions $d\geq 5$. 

Our main result fits in the context of ground state threshold dynamics for nonlinear dispersive PDE, a topic of intense ongoing research. For the case NLS with combined nonlinearities, we specifically mention the recent works \cite{AJZ, KMV, Murphy, ArdilaMurphy2023, HamanoKikuchiWatanabe2023}. In particular, in \cite{AJZ} the dynamics at the energy threshold is classified in three space dimensions for the focusing energy-critical NLS perturbed by a defocusing cubic nonlinearity with radial initial data. For the case of the $3d$ defocusing energy-critical NLS perturbed by a focusing cubic nonlinearity (cubic-quintic NLS), the dynamics were classified in \cite{ArdilaMurphy2023} without assuming radiality of the initial data. 

Finally, we expect that the methodology developed in this paper is fully applicable to the study of threshold \emph{non-radial} scattering for the focusing energy-critical NLS perturbed by a general defocusing energy-intercritical nonlinearity
\begin{equation*}
\begin{cases}
(i\partial_{t} + \Delta) u = -|u|^{\frac{4}{d-2}}u + |u|^{p}u,\quad  p\in(\frac{4}{d},\frac{4}{d-2})\\
u|_{t=0}=u_{0} \in H^{1}(\mathbb{R}^{d})
\end{cases}
\end{equation*}
in dimensions $d \geq 4$. In particular, the proof will be greatly simplified because we do not need to establish compactness (modulo the symmetry group) in $L^2$ for the blowup solution, which is necessary and crucial in our double critical setting, as discussed in detail below.

\subsection{Strategy of the proof, part I} 

The proof of Theorem~\ref{Th2} proceeds by contradiction.  In particular, if Theorem~\ref{Th2} fails then there exists a non-scattering solution $u$ satisfying the energy constraints \eqref{Thres}. Without loss of generality, we may assume that $u$ fails to scatter forward in time.  Using the double-track profile decomposition from \cite{Luo2024} and the sub-threshold scattering result Theorem~\ref{Th1}, we can prove that the solution $u$ must be precompact in $\dot H^1$ modulo a time-dependent frequency scale $\lambda(t)\geq 1$ and spatial center $x(t)$. We can additionally prove that $u$ is forward global.  

To complete the proof, we would like to show that such a solution cannot exist.  In general, this requires an analysis of the compactness parameters $\lambda(t)$ and $x(t)$.  For example, in the radial setting one has $x(t)\equiv 0$. This simplifies the problem considerably, as it allows for the direct use of a virial-type estimate (cf. \cite{AJZ}).  Unfortunately, in the non-radial setting we have no \emph{a priori} control over $x(t)$ in general.  Nonetheless, there is one situation in which we can effectively control $x(t)$, namely, when the solution $u$ is very close to the orbit of the ground state (we call this the \emph{small modulation regime}).  To make this precise, we define
\begin{equation}\label{deltau}
\delta(u(t)):=\|\nabla W\|_{L^2}^2 - \|\nabla u(t)\|_{L^2}^2
\end{equation}
and observe that if $0<\delta(u(t))\ll 1$, we can define modulation parameters $(\theta(t),\mu(t),x_0(t))$ yielding a decomposition
\[
e^{i\theta(t)}\tfrac{1}{\mu(t)} u(t,\tfrac{x-x_0(t)}{\mu(t)})=[1+\alpha(t)]W + h(t)
\]
for suitable small $\alpha,h$.  The time evolutions of the modulation parameters are dictated by certain ODEs, which in particular allow us to obtain \emph{a priori} control over $x_0(t)$.  

In fact, this motivates our fundamental strategy, similar to that appearing in works such as \cite{MaMiaoMurphyZheng2025, DodsonMC}: \emph{we seek to reduce the problem to the preclusion of an almost periodic solution belonging to the small modulation regime for all (forward) time.} Indeed, using a bootstrap argument based on modulation analysis and (modulated) virial estimates (inspired by \cite{DodsonMC}), we can ultimately preclude the existence of such solutions and thereby complete the proof of Theorem~\ref{Th2}.

In fact, while this overall approach is similar to that of \cite{MaMiaoMurphyZheng2025, DodsonMC} (which address the mass and energy critical equations individually), carrying out the reduction described above leads to significant new challenges in our setting due to the double critical nonlinearity.  At a technical level, the difficulty arises from the fact that while we naturally obtain compactness for blowup solutions at the $\dot H^1$ level, carrying out the reduction described above potentially requires compactness in $L^2$ level, as well.  This leads us to consider a new formulation of the problem, which in particular involves the notion of \emph{minimal mass for blowup} at the energy threshold.

\subsection{Strategy of the proof, part II}\label{S:strategy} We now outline in more detail our approach to proving Theorem~\ref{Th2}.

\begin{definition}[Threshold blowup set]\label{D:B} Let us define $\mathcal{B}$ to be the set of maximal-lifespan solutions $u:I_{\max}\times\R^4\to\C$ to \eqref{NLS} with $u(0)\in H^1$ that satisfy the energy constraint \eqref{Thres} and blow up forward in time.
\end{definition}

We define the \emph{critical mass}
\begin{equation}\label{m_c}
m_c := \inf_{u\in\mathcal{B}} M(u) \in[0,\infty] 
\end{equation}
and introduce a universal small constant $\eta_*>0$ (quantifying the notion of sufficiently small modulation) that will be determined in the analysis below. 

In light of time-reversal symmetry, Theorem~\ref{Th2} is equivalent to proving that $\mathcal{B}=\emptyset$.  We argue by contradiction and therefore assume throughout the paper that $\mathcal{B}\neq \emptyset$.  Note that this assumption implies $m_c\in[0,\infty)$. 

The proof of Theorem~\ref{Th2} can then be broken into several propositions, whose proofs in Sections~\ref{S:Compactness}--\ref{S:preclude} comprise the heart of this paper.

We first establish the following $\dot H^1$-compactness property for threshold blowup solutions. 
\begin{proposition}[Blowup solutions are compact]\label{P2} If $u\in \mathcal{B}$, then there exist $\lambda:[0,\sup I_{\max})\to[1,\infty)$ and $x:[0,\sup I_{\max})\to\R^4$ such that
\begin{equation}\label{CompactX}
\bigl\{\tfrac{1}{\lambda(t)}u(t, \tfrac{x-x(t)}{\lambda(t)}): t\in [0, \sup I_{\max})\bigr\} \quad
\text{is precompact in $\dot{H}^{1}(\R^4)$}.
\end{equation}
\end{proposition}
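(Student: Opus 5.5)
We argue by the standard concentration-compactness route: show that every sequence of times along the forward trajectory has a subsequence that, after rescaling and translating, converges in $\dot H^1$. Precompactness modulo $(\lambda(t),x(t))$ follows from this by a routine argument. The restriction $\lambda(t)\ge 1$ should come from mass conservation and the Sobolev embedding, since $u$ cannot concentrate at low frequencies with bounded mass.

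\textbf{Preliminary facts.} By the variational characterization of $W$ (sharp Sobolev), the conditions $E(u_0)=E^c(W)$ and $\|\nabla u_0\|_{L^2}<\|\nabla W\|_{L^2}$ propagate on the lifespan. This holds because $E^c(u)\le E(u)=E^c(W)$, and continuity forbids crossing $\|\nabla W\|_{L^2}$ unless $\|u\|_{L^3}=0$. Hence $\|\nabla u(t)\|_{L^2}<\|\nabla W\|_{L^2}$ and $\|u(t)\|_{H^1}$ is uniformly bounded. In particular, the local theory gives that forward blowup means $\|u\|_{L^6_{t,x}\cap L^3_{t,x}([0,\sup I_{\max}))}=\infty$, not norm blowup.

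\textbf{Main step.} Take any $t_n\in[0,\sup I_{\max})$. Apply the double-track profile decomposition of \cite{Luo2024} to the bounded $H^1$ sequence $u(t_n)$. Profiles come in two types: $H^1$ profiles at scale $\lambda_n\equiv1$, and $\dot H^1$ profiles concentrating at scales $\lambda_n^j\to\infty$, which evolve by the cubic NLS \eqref{ECgNLS}. We also have asymptotic decoupling of $\|\nabla\cdot\|_{L^2}^2$, $M$, $E$ and $E^c$.

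Suppose there are at least two nonzero profiles, or a single profile with nontrivial remainder in $\dot H^1$. Then each profile has $\|\nabla\phi^j\|_{L^2}<\|\nabla W\|_{L^2}$. Moreover, each profile has energy (respectively $E^c$ energy for concentrating profiles) strictly below $E^c(W)$, by positivity of the energies in the subcritical region: $E(f)\gtrsim \|\nabla f\|_{L^2}^2$ when $\|\nabla f\|_{L^2}<\|\nabla W\|_{L^2}$. Theorem \ref{Th1}, and the Kenig--Merle/Dodson cubic result for concentrating profiles, then give global scattering nonlinear profiles with finite $L^6\cap L^3$ norms. By decoupling and the long-time stability theory for \eqref{NLS}, $u$ would have finite scattering norm on $[t_n,\sup I_{\max})$ or on $[0,t_n]$. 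The first contradicts blowup. The second, after passing to the case $t_n\to\sup I_{\max}$ with norms on $[0,t_n]$ unbounded, is handled in the usual way, also using the time translation of profiles, i.e.\ $t_n^j\to\pm\infty$ versus bounded.

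So there is exactly one profile, and the remainder tends to $0$ in $\dot H^1$ (and in the Strichartz space). The energy of the single profile must equal $E^c(W)$, since a strict deficit would let Theorem \ref{Th1} apply. We must also rule out the case that the profile's time parameter $t_n^1\to\pm\infty$. In that case the linear evolution is small in the scattering norm on one half-line, which again gives finite scattering norm by stability. Hence $\lambda_n^{1}u(t_n,\lambda_n^1(x+x_n^1))\cdot$ (rescaled appropriately) converges in $\dot H^1$. Set $\lambda(t_n)=\lambda_n^1$ and $x(t_n)=x_n^1$. If the profile is an $H^1$ profile, take $\lambda=1$. Since concentrating scales tend to $\infty$, after adjusting, $\lambda\ge1$ is legitimate.

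\textbf{Main obstacle.} The expected main obstacle is the case of a single concentrating ($\lambda_n\to\infty$) profile carrying $E^c$ exactly equal to $E^c(W)$ with $\|\nabla\phi\|_{L^2}<\|\nabla W\|_{L^2}$. There the threshold result for cubic NLS (Duyckaerts--Merle: below-$\|\nabla W\|$ threshold solutions scatter, the only non-scattering one being $W^-$-type, which in fact scatters in one direction and not the other) must be invoked. We need to check compatibility with the direction of time. If the profile's cubic evolution fails to scatter forward, no contradiction arises; this is precisely compactness with $\lambda(t_n)\to\infty$, consistent with the claim. Thus the claim survives, but carefully tracking which profile configurations yield compactness versus contradiction is the delicate part.
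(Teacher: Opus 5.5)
Your outline follows the paper's proof: you profile-decompose $u(\tau_n)$ with $\tau_n\to\sup I_{\max}$, exclude $J^*=0$ and $J^*\ge2$ by sub-threshold theory and stability, and then, for the single profile, show the $\dot H^1$ remainder vanishes and the time parameter cannot diverge. The gap is that your description of the profiles omits the class that makes the decomposition ``double-track''. Besides unit-scale $H^1$ profiles and concentrating $\dot H^1$ profiles, Theorem~\ref{Profi} produces low-frequency, $L^2$-normalized profiles
\[
\phi_n=\lambda_n^{-2}e^{ix\cdot\xi_n}\bigl[e^{it_n\Delta}P_{\le\lambda_n^{\theta}}\psi\bigr]\bigl(\tfrac{x-x_n}{\lambda_n}\bigr),\qquad \psi\in L^2,\quad \lambda_n\to\infty\ \text{(physical scale)},\quad \xi_n\ \text{bounded}.
\]
These are bounded in $H^1$, but their free evolutions have non-vanishing $L^3_{t,x}$ norm. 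So they cannot be absorbed into the remainder, whose $L^3_{t,x}$ smallness is what stability needs to handle $|u|u$. Their nonlinear profiles, with the uniform bounds and spacetime approximation needed for decoupling, come from neither Theorem~\ref{Th1} nor the cubic theory. They come from Dodson's scattering theorem for the defocusing mass-critical equation $i\partial_t u+\Delta u=|u|u$, via Lemma~\ref{nonlinearPII}. Your multi-profile step is incomplete without them.

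The more serious consequence is in the one-profile case. You never rule out that the single profile is of this type, and this is exactly the case the paper must exclude to get $\lambda(t)\ge1$. Your heuristic (mass conservation plus Sobolev embedding) does not do it, because mass conservation only excludes $\dot H^1$-normalized bubbles spreading out, not $L^2$-normalized ones. If $\xi_n\to\xi\neq0$, then $M(\phi_n)\to M(\psi)$, $\|\nabla\phi_n\|_{L^2}^2\to|\xi|^2M(\psi)$, and $\|\phi_n\|_{L^3}+\|\phi_n\|_{L^4}\to0$. Hence $E(\phi_n)\to\frac12|\xi|^2M(\psi)$, which can equal $E^c(W)$ while $\|\nabla\phi_n\|_{L^2}^2\to2E^c(W)<\|\nabla W\|_{L^2}^2$. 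All conservation laws and \eqref{Thres} are therefore compatible with $u(\tau_n)=\phi_n+o_{\dot H^1}(1)$. Yet such a sequence is not precompact modulo $\dot H^1$ symmetries: its $L^4$ norm is symmetry-invariant and tends to $0$, while its $\dot H^1$ norm does not. The paper excludes this case dynamically. It takes the nonlinear profile $v_n$ from Lemma~\ref{nonlinearPII}, forms $v_n+e^{it\Delta}R_n$, and applies Lemma~\ref{stabi} to get spacetime bounds for $u$, a contradiction.

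Finally, the ``main obstacle'' you flag is not one for this proposition. A single concentrating profile with $t_n\equiv0$ is simply the desired conclusion, and no threshold classification for the cubic equation is needed here. The paper invokes that classification only later, in Propositions~\ref{P4} and~\ref{P6}.
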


With Proposition~\ref{P2} in hand, we can then establish global existence for solutions obeying the energy constraint. 

\begin{proposition}[Global existence]\label{P3} If $u:I_{\max}\times\R^4\to\C$ is a maximal-lifespan solution to \eqref{NLS} satisfying $u(0)\in H^1$ and the energy constraint \eqref{Thres}, then $I_{\max}=\R$. 
\end{proposition}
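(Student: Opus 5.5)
By time-reversal symmetry it suffices to show that $T^*:=\sup I_{\max}=\infty$. We argue by contradiction and suppose $T^*<\infty$. Then $u$ blows up forward in time, so $u\in\mathcal{B}$. Proposition~\ref{P2} then provides $\lambda(t)\geq 1$ and $x(t)$ such that \eqref{CompactX} holds on $[0,T^*)$.

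\emph{Step 1: coercivity.} We first record that $\|\nabla u(t)\|_{L^2}<\|\nabla W\|_{L^2}$ for all $t\in I_{\max}$. Since $E(u)\geq E^c(u)$, the sharp Sobolev inequality together with continuity of $t\mapsto\|\nabla u(t)\|_{L^2}$ forbids crossing the value $\|\nabla W\|_{L^2}$. Indeed, at a crossing time we would have $E^c(u)=E^c(W)$ exactly, which forces $\int|u|^3=0$ and hence $u\equiv 0$, a contradiction. It follows that $\|u(t)\|_{H^1}$ is bounded uniformly on $I_{\max}$ by mass conservation.

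\emph{Step 2: the frequency scale blows up.} Finite-time blowup with bounded $H^1$ norm means the $\dot H^1$ local theory breaks down through concentration. Combining this with compactness, the standard argument gives $\lambda(t)\to\infty$ as $t\to T^*$. Otherwise, along a sequence $t_n\to T^*$ with $\lambda(t_n)$ bounded, the normalized profiles converge in $\dot H^1$. Since $u(t_n)$ is bounded in $L^2$ and $\lambda$ is bounded, the $L^2$ pieces stay controlled, so $u(t_n)$ converges (modulo translation) in $\dot H^1$. Stability theory would then give a uniform existence time past $T^*$, which is impossible.

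\emph{Step 3: mass concentration contradiction.} This is the standard trick of Kenig--Merle and Killip--Vi\c{s}an. Fix $R>0$ and a cutoff $\phi_R$, and set $M_R(t)=\int\phi_R(x-x(t_0))|u(t,x)|^2\,dx$. The local mass satisfies $|\partial_t M_R|\lesssim R^{-1}\|u\|_{L^2}\|\nabla u\|_{L^2}\lesssim R^{-1}$, using Step 1. Because $\lambda(t)\to\infty$ and the normalized profile is precompact in $\dot H^1$, we get for each fixed $R$ that
\[
\int_{|x+x(t)|\leq R}|u(t,x)|^2\,dx\to 0 \quad\text{as } t\to T^*,
\]
by H\"older with the $L^4$ norm on a ball. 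This step must avoid issues with the moving center, so I would use the spatial center only locally in time, or argue directly with the fixed weight. Integrating the derivative bound from $t$ to $T^*$ then gives
\[
\int\phi_R(x-x(t))|u(t)|^2\,dx\lesssim R^{-1}(T^*-t)+o(1).
\]
Letting first $t\to T^*$ and then $R\to\infty$ yields $M(u)=0$, so $u\equiv0$, contradicting blowup.

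\emph{Main obstacle.} The delicate point is handling $x(t)$ in Step 3. The cleanest approach is to observe that the $L^2$ mass in any fixed ball $B(x_0,R)$ tends to zero as $t\to T^*$, since concentration at scale $\lambda^{-1}\to0$ makes all $L^2$ mass in bounded regions vanish. To justify this, I would combine tightness in $\dot H^1$ with the bound $\|u\|_{L^4(B)}$ small outside the concentration point and H\"older on the small concentration ball. Then I would apply the local mass identity with the weight fixed at an arbitrary center, letting $t\to T^*$.
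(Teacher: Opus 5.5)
Your proposal is correct and has the same structure as the paper's proof: $T^*<\infty$ puts $u$ in $\mathcal B$, compactness plus stability then forces $\lambda(t)\to\infty$, and the localized mass argument (mass in every fixed ball vanishes as $t\to T^*$) gives $M(u)=0$. Two small repairs are needed: in Step 3 the limits must be taken in the opposite order (fix $t$ and let $R\to\infty$ in $M_R(t)\lesssim R^{-1}(T^*-t)$, which with your $R^{-1}$ flux bound already gives $M(u)=0$, whereas sending $t\to T^*$ first only recovers the vanishing of local mass), and in Step 2 the difference $R_n$ between the translated $u(t_n)$ and its $\dot H^1$-limit (which lies in $H^1$ as the weak $L^2$ limit) is small only in $\dot H^1$, so you must invoke stability through Remark~\ref{R:stabi}, using $\|e^{it\Delta}R_n\|_{L^3_x}\lesssim\|R_n\|_{L^2}^{1/3}\|R_n\|_{\dot H^1}^{2/3}$ on a bounded time interval rather than $H^1$-closeness.
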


We next establish the following property relating frequency scale and modulation. 
\begin{proposition}[Scale vs. modulation]\label{P4} If $u\in\mathcal{B}$ and $t_n\to\infty$ is such that $\lambda(t_n)\to\infty$, then $\delta(u(t_n))\to 0$. 
\end{proposition}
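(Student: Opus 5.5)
Suppose $u\in\mathcal{B}$ and $t_n\to\infty$ with $\lambda(t_n)\to\infty$. Set
\[
v_n(x):=\tfrac{1}{\lambda(t_n)}u\bigl(t_n,\tfrac{x-x(t_n)}{\lambda(t_n)}\bigr).
\]
By Proposition~\ref{P2}, after passing to a subsequence we have $v_n\to v_\infty$ in $\dot H^1(\R^4)$. We will show that $v_\infty$ is a rescaled and rotated copy of $W$. This gives $\|\nabla u(t_n)\|_{L^2}=\|\nabla v_n\|_{L^2}\to\|\nabla W\|_{L^2}$, hence $\delta(u(t_n))\to0$. Since the argument applies to every subsequence, the full sequence converges.

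\textbf{Step 1: $L^3$ norms vanish.} The $\dot H^1$ scaling preserves the $L^4$ norm, but
\[
\|v_n\|_{L^2}=\lambda(t_n)^{-1}M(u)^{1/2}\to0.
\]
By Sobolev embedding, $\|v_n\|_{L^4}\lesssim \|\nabla u(t_n)\|_{L^2}$, which stays bounded because $\delta(u(t))>0$ is preserved. This uses the standard energy trapping argument: the threshold $E=E^c(W)$ together with the sharp Sobolev inequality and continuity keeps $\|\nabla u(t)\|_{L^2}<\|\nabla W\|_{L^2}$, since $E^c(u)\le E(u)$. Interpolating $L^3$ between $L^2$ and $L^4$ gives $\|v_n\|_{L^3}\to0$. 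Since $\int|u|^3 = \lambda^{2}\int |v_n|^3$... one must be careful here, so work directly with $u$:
\[
\int|u(t_n)|^3 \le \|u(t_n)\|_{L^2}\,\|u(t_n)\|_{L^4}^2 .
\]
This does not decay by itself. Instead we use frequency concentration. Because $v_n\to v_\infty$ in $\dot H^1$, $u(t_n)$ has its $\dot H^1$ mass concentrated at frequencies $\sim\lambda(t_n)\to\infty$. Split
\[
u(t_n)=P_{\le N}u(t_n)+P_{>N}u(t_n).
\]
For fixed $N$, $\|P_{\le N}u(t_n)\|_{L^4}\to0$: by compactness, $\|\nabla P_{\le N}u(t_n)\|_{L^2} =\|\nabla P_{\le N/\lambda(t_n)}v_n\|_{L^2}\to0$. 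The high-frequency part has $L^2$ norm $\lesssim N^{-1}$. Hence $\int|u(t_n)|^3\to0$ by Hölder, with the bound $\|u\|_{L^2}\|u\|_{L^4}^2$ applied to each piece together with the cross terms.

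\textbf{Step 2: Identify the limit as an extremizer.} Energy conservation gives
\[
E^c(u(t_n))=E(u_0)-\tfrac13\|u(t_n)\|_{L^3}^3\to E^c(W).
\]
By scale invariance $E^c(v_n)=E^c(u(t_n))$, so strong $\dot H^1$ convergence yields $E^c(v_\infty)=E^c(W)$ and $\|\nabla v_\infty\|_{L^2}\le\|\nabla W\|_{L^2}$. The function $f(y)=\tfrac12 y-\tfrac{C_4^4}{4}y^2$ bounds $E^c$ from below in terms of $y=\|\nabla v\|_2^2$ through the sharp Sobolev inequality. It is strictly increasing on $[0,\|\nabla W\|_2^2]$ and attains its maximum $E^c(W)$ only at $y=\|\nabla W\|_2^2$. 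Therefore $E^c(v_\infty)=E^c(W)$ forces $\|\nabla v_\infty\|_2=\|\nabla W\|_2$. Indeed, equality in $E^c(v)\ge f(\|\nabla v\|^2)$ with $f\le E^c(W)$ leaves no room for anything else, and this already gives $\delta(u(t_n))\to0$. Equality in the sharp Sobolev inequality additionally identifies $v_\infty$ with $W$ modulo symmetries, but that identification is not needed here.

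\textbf{Main obstacle.} The main obstacle is Step 1, namely showing that the $L^3$ potential energy vanishes when $\lambda(t_n)\to\infty$ using only $\dot H^1$ compactness and bounded mass. The frequency-splitting argument above handles this. The remaining steps are the standard variational characterization near $W$.
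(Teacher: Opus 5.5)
\textbf{There is a genuine gap in Step 2.} Sharp Sobolev gives $E^c(v)\ge f(\|\nabla v\|_{L^2}^2)$, which is a \emph{lower} bound on $E^c$. Since $f\le E^c(W)$ everywhere, the equality $E^c(v_\infty)=E^c(W)$ gives no information about $\|\nabla v_\infty\|_{L^2}$. This is just \eqref{IneW}, which only returns $\|\nabla v_\infty\|_{L^2}\le\|\nabla W\|_{L^2}$.

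The two facts you extracted are in fact satisfied by many functions far from the orbit of $W$. Take any $g$ that is not a Sobolev extremizer. Then $E^c(ag)$ vanishes at $a=0$. At the value of $a$ with $\|\nabla(ag)\|_{L^2}=\|\nabla W\|_{L^2}$, it strictly exceeds $f(\|\nabla W\|_{L^2}^2)=E^c(W)$. So some intermediate $a_0$ gives $E^c(a_0g)=E^c(W)$ with $\|\nabla(a_0 g)\|_{L^2}<\|\nabla W\|_{L^2}$.

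A warning sign is that, beyond obtaining compactness, your argument never uses that $u$ fails to scatter. The proposition cannot follow from static variational considerations alone.

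A side remark on Step 1: your $L^2$ scaling is backwards. In fact $\|v_n\|_{L^2}=\lambda(t_n)M(u)^{1/2}\to\infty$ and $\int|u(t_n)|^3=\lambda(t_n)^{-1}\int|v_n|^3$. Your frequency-splitting argument is nevertheless correct and gives $\|u(t_n)\|_{L^3}\to0$, hence $E^c(v_\infty)=E^c(W)$. That is exactly what the paper observes for its profile.

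\textbf{The missing ingredient is dynamical.} The paper argues by contradiction and assumes $\delta(u(t_n))\ge c>0$ along a subsequence. The profile decomposition gives $u(t_n)=\lambda_n[P_{>\lambda_n^{-\theta}}\phi](\lambda_n x)+R_n$, with $R_n$ vanishing in $\dot H^1$ and under the free evolution. Here $\phi$ plays the role of your $v_\infty$ and satisfies $E^c(\phi)=E^c(W)$ and $\|\phi\|_{\dot H^1}<\|W\|_{\dot H^1}$. The inequality is strict precisely because $\delta\ge c$.

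By the threshold classification for the energy-critical NLS (Ma--Miao--Murphy--Zheng), the cubic evolution of $\phi$ either scatters or coincides with $W^-$ modulo symmetries. In either case it has finite $L^6_{t,x}$ norm in at least one time direction.

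Because the profile lives at frequency $\lambda_n\to\infty$, the rescaled, frequency-truncated cubic solution has $L^3_{t,x}$ norm $O(\lambda_n^{-(1-\theta)})$. The quadratic term is therefore a vanishing perturbation. Stability (Lemma~\ref{stabi} with Remark~\ref{R:stabi}) then transfers spacetime bounds to $u$, either on $[t_n,\infty)$ or on $[0,t_n]$ uniformly in $n$. Either contradicts $u\in\mathcal{B}$.

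This contradiction argument is what must replace your Step 2. The identification of the limit with $W$ is a consequence of the conclusion, not a route to it.
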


We then take the following important step and prove that any threshold blowup solution must enter the small modulation regime along some time sequence $t_n\to\infty$. Here we argue by contradiction and adapt the interaction Morawetz estimate of \cite{D5} (which is in fact simplified in our setting due to the fact that $u\in L^2$). Importantly, this estimate is insensitive to the behavior of $x(t)$.  

\begin{proposition}[Small modulation along a sequence]\label{P5} If $u\in\mathcal{B}$, then there exists a sequence $t_n\to\infty$ such that $\delta(u(t_n))\to 0$. 
\end{proposition}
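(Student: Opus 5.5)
We argue by contradiction. Suppose $u\in\mathcal{B}$ and there exist $\delta_0>0$ and $T_0\ge0$ with $\delta(u(t))\geq\delta_0$ for all $t\geq T_0$. By Proposition~\ref{P3}, $u$ is global. By Proposition~\ref{P4}, $\lambda(t)$ is then bounded on $[T_0,\infty)$, say $1\le\lambda(t)\le C(\delta_0)$. Together with Proposition~\ref{P2}, this shows that $\{u(t,\cdot-x(t)):t\ge T_0\}$ is precompact in $\dot H^1$ with bounded frequency scale. In particular, for every $\eta>0$ there is $R(\eta)$ such that
\[
\int_{|x-x(t)|>R}|\nabla u|^2+|u|^4\,dx<\eta
\]
uniformly in $t\ge T_0$. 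We also have the mass bound $M(u)<\infty$ and the lower bound $\inf_{t\ge T_0}\|u(t)\|_{L^4}>0$. The latter holds because $u(t)\to0$ in $L^4$ along a sequence would, by compactness, give $u(t_n)\to0$ in $\dot H^1$, contradicting $E=E^c(W)>0$ together with the coercivity below.

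The next ingredient is coercivity away from the ground state. The sharp Sobolev inequality, the threshold energy, and the condition $\|\nabla u\|_2<\|\nabla W\|_2$ (which is preserved in time) give, for $\delta(u(t))\ge\delta_0$,
\[
\int|\nabla u|^2-\int|u|^4 \;\gtrsim_{\delta_0}\; \int|\nabla u|^2 .
\]
This follows from the usual variational estimate: $E^c(u)\le E(u)=E^c(W)$, and the function $y\mapsto \tfrac12y-\tfrac{C_S}{4}y^2$ is strictly increasing, with positive derivative, below $\|\nabla W\|^2_2$. The defocusing term $\frac13|u|^3$ only helps, since it appears with a favorable sign in the virial identity.

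The core step is an interaction Morawetz estimate in the spirit of \cite{D5}, simplified because $u\in L^2$. Take the truncated interaction Morawetz functional
\[
M(t)=\iint |u(t,y)|^2\,\Im\big(\bar u\,\nabla u\big)(t,x)\cdot\psi_R(x-y)\,dx\,dy,
\]
with $\psi_R(x)=x\phi(|x|/R)$. Since $u\in L^2$, we have $|M(t)|\lesssim R\,M(u)\|u\|_{L^2}\|\nabla u\|_{L^2}\lesssim R$ uniformly in time. Its derivative splits into two parts. The main part is
\[
\iint |u(y)|^2\Big(|\nabla u(x)|^2-|u(x)|^4+\tfrac{2}{3}|u(x)|^3\Big)\,\chi_R(x-y)\,dx\,dy ,
\]
(with the appropriate constants and a Galilean-adjusted angular gradient if needed). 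The remainder is controlled by the tail quantity $\eta$ above together with $O(R^{-2})$ terms. On $\{|x-x(t)|\le R_0,\ |y-x(t)|\le R_0\}$ with $R\gg R_0$, the cutoff equals $1$. Coercivity, applied to a localized $\chi u$ (this requires the localized version of the coercivity, using the smallness of the tails), then gives
\[
\frac{d}{dt}M(t)\gtrsim \Big(\int_{|y-x(t)|\le R_0}|u|^2\Big)\int|\nabla u|^2 - o_R(1).
\]
The mass near the center $x(t)$ is bounded below: the lower bound on $\|u\|_{L^4}$ concentrates near $x(t)$ at bounded scale, and Hölder then gives a lower bound on $\int_{|y-x(t)|\le R_0}|u|^2$. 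Integrating over $[T_0,T]$ yields $T\lesssim R$. Letting $T\to\infty$ with $R$ fixed large gives a contradiction.

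I expect the main obstacle to be controlling the error terms of the interaction Morawetz estimate uniformly in time without any control on $x(t)$. One must check that the interaction structure makes the estimate translation-invariant, so that only relative distances $|x-y|$ enter and tightness around $x(t)$ suffices. One must also check that the cross terms from the cutoff, of the form $R^{-1}\iint|u(y)|^2|u||\nabla u|$ over $|x-y|\sim R$, are small. This uses the $\dot H^1$ tightness together with $L^2$ boundedness, but not $L^2$ tightness. If the $L^2$ mass far from $x(t)$ causes trouble, I would average the estimate over $R\in[R_1,R_1^{K}]$ in the standard manner to absorb these terms.
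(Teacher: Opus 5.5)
You have the right architecture, and it matches the paper's: bounded $\lambda(t)$ via Proposition~\ref{P4}, then a Dodson-type interaction Morawetz estimate that depends only on $|x-y|$, then a contradiction with $\dot H^1$-compactness. Using the coercivity from $\delta\geq\delta_0$ would be a legitimate variant; the paper only needs the focusing part to be nonnegative and takes its positivity from the defocusing $|u|^3$ term. However, the core estimate does not go through as written.

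First, the fixed-$R$ bound $\frac{d}{dt}M\gtrsim c-o_R(1)$ is false. Differentiating the weight $z\phi(|z|/R)$ produces $\delta_{jk}\phi+\frac{z_jz_k}{R|z|}\phi'$, and the $\phi'$ part is $O(1)$, not $O(R^{-1})$, on $|x-y|\sim R$. The resulting terms $\iint_{|x-y|\sim R}|u(y)|^2\bigl(|\nabla u|^2+|u|^4+|u|^3\bigr)(x)\,dx\,dy$ are therefore, by $\dot H^1$-tightness, comparable to the mass of $u(t)$ in the annulus $|y-x(t)|\sim R$. Nothing makes that mass small uniformly in $t$, since you have no $L^2$-tightness; this is exactly the difficulty the paper emphasizes. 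So ``$T\lesssim R$'' does not follow. The averaging $\int_1^{R_0}\cdots\frac{dR}{R}$ is not a fallback but the actual mechanism. Because $\int\frac1R|\phi'(\frac{|z|}{R})|\frac{|z|}{R}\,dR\lesssim 1$, averaging makes all such errors $O(1)$ per unit time. Meanwhile the main term gains a factor $\ln R_0$ from $R\in[R_0^{1/2},R_0]$, which gives $\ln R_0\int_0^K[\ldots]\,dt\lesssim R_0^4+K$.

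Second, your ``main part'' omits the momentum--momentum term $-2\iint \Im(\bar u\nabla u)(y)\,\phi(\frac{|x-y|}{R})\cdot\Im(\bar u\nabla u)(x)\,dx\,dy$, which is as large as the kinetic term. Kinetic minus momentum is nonnegative, but that uses up the kinetic energy you need to beat $-|u|^4$. So coercivity cannot simply be ``applied to a localized $\chi u$''.

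The paper resolves this with a specific kernel, $\phi(x-y)=\int\psi^2(x-s)\psi^2(y-s)\,ds$. This turns the main term into an $s$-average of products $\int\psi^2|u|^2\,dy\cdot\int\psi^2[\ldots]\,dx$. For each $(s,t)$ one then replaces $u$ by $v=e^{ix\cdot\xi(s,t)}u$, with $\xi(s,t)$ chosen to kill the local momentum. The bracket becomes $\|\nabla(\psi v)\|_2^2-\int\psi^2|u|^4+\int\psi^2|u|^3$, plus an $\int|u|^2\psi\Delta\psi$ error that is $O(1)$ after averaging. Hölder and sharp Sobolev with $\|\nabla u\|_2<\|\nabla W\|_2$ then give $\|\nabla(\psi v)\|_2^2\geq\int\psi^2|u|^4$.

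A single global Galilean boost is not a substitute. It lowers the energy, so you would first have to prove $P(u)=0$ via sub-threshold scattering. You would then also need tightness of $\Im(\bar u\nabla u)$ and would have to restrict the average to large $R$; none of this is in your plan.

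Finally, your lower bound on $\int_{|y-x(t)|\leq R_0}|u|^2$ cannot come from Hölder, since $\|u\|_{L^2(B)}\lesssim R_0\|u\|_{L^4(B)}$ goes the wrong way. It requires the compactness argument. The paper sidesteps this by extracting times $t_n$ along which the local $L^3$ or $L^2$ mass tends to zero, and then contradicting the existence of a nonzero $H^1$ limit profile.
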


We next upgrade Proposition~\ref{P5} and prove that if $\mathcal{B}\neq \emptyset$, it must contain an element that belongs to the small modulation regime for its entire (forward) lifespan.  Our approach to this step is one of the key novelties of this work.  It is in this step that we utilize the notion of the minimal mass for blowup threshold solutions. The proof is carried out in Section~\ref{S:SMAT}. 

\begin{proposition}[Small modulation for all time]\label{P6} If $\mathcal{B}\neq \emptyset$, then there exists $u\in\mathcal{B}$ such that
\begin{equation}\label{SMAT}
\sup_{t\in[0,\infty)}\delta(u(t))\leq\eta_*.
\end{equation}
\end{proposition}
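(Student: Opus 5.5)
The approach is a minimal-mass argument combined with a limiting profile extracted along the sequence from Proposition~\ref{P5}. First I will show that the infimum $m_c$ in \eqref{m_c} is attained. Take $u_n\in\mathcal{B}$ with $M(u_n)\to m_c$. By Propositions~\ref{P2} and \ref{P3}, each $u_n$ is global and almost periodic in $\dot H^1$ on $[0,\infty)$ with $\lambda_n(t)\ge1$. Since $\lambda_n\geq 1$ and the mass is bounded, I would like the rescaled data to converge strongly in $H^1$. The aim is a limit $u_\infty\in\mathcal{B}$ with $M(u_\infty)\le m_c$, hence $M(u_\infty)=m_c$.

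To get the limit, I would apply the double-track profile decomposition of \cite{Luo2024} to $u_n(t_n)$ at well-chosen times. Any loss of mass or energy into a second profile would leave every profile strictly below threshold or with smaller mass. Theorem~\ref{Th1}, together with stability theory and the definition of $m_c$ (profiles with mass $<m_c$ at threshold scatter), would then force $u_n$ to scatter, a contradiction. This step is where the double-critical structure enters. The energy-critical profile track, with $\lambda\to\infty$, carries no mass. So I must show that such a profile cannot absorb all the energy unless $\delta\to 0$, which is exactly the situation handled by Proposition~\ref{P4}.

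Next I take a minimal-mass element $u\in\mathcal{B}$. I claim that $u$ is then precompact in $L^2$ as well, modulo translations and with the same scales. If mass escaped along a time sequence, decompose $u(t_n)$ as above. The remaining piece at the threshold energy would have strictly smaller mass and, by minimality, would scatter. Meanwhile the escaping mass carries nonnegative energy, since $\|\nabla\cdot\|_{L^2}<\|\nabla W\|_{L^2}$ gives coercivity of $E^c$ and the $|u|^3$ term is positive. These facts together contradict non-scattering.

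Now I use Proposition~\ref{P5}: there are $t_n\to\infty$ with $\delta(u(t_n))\to0$. I define $v_n(t)=u(t_n+t)$, suitably renormalized by the symmetries. I argue that either one can choose $t_n$ so that $\sup_{t\ge0}\delta(u(t_n+t))\le\eta_*$, in which case the time-translate $u(\cdot+t_n)\in\mathcal{B}$ works, or there are exits from the $\eta_*$-regime after times where $\delta$ is tiny. In the second case I consider the first exit time $s_n>t_n$ with $\delta(u(s_n))=\eta_*$ and study the limit of $u(s_n+\cdot)$. By $L^2$ and $\dot H^1$ compactness with bounded scales (Proposition~\ref{P4} bounds $\lambda(s_n)$ since $\delta=\eta_*$), $u(s_n)$ converges in $H^1$ modulo translation to some $w_0$ with $E=E^c(W)$, $M=m_c$, and $\delta(w_0)=\eta_*$. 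The associated solution $w$ is in $\mathcal{B}$, either by stability or by minimality of $m_c$. Running backward from $s_n$, the solution stays in the modulation regime on $[t_n,s_n]$ with $s_n-t_n\to\infty$, since modulation ODE control shows that leaving $\delta\approx0$ takes a long time. Hence $w$ satisfies $\delta(w(t))\le\eta_*$ for all $t\le0$. Applying time reversal, $\bar w(-t)$ is a threshold solution that stays small-modulation for all forward time. I still must check that it blows up forward in time, which follows from backward non-scattering: along $(-\infty,0]$, $w$ is a limit of pieces of $u$ lying in the compact set.

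The main obstacle is this last step together with the existence of a minimizer. The difficult points are ensuring that the limiting object is genuinely non-scattering, and obtaining $L^2$ compactness, because the mass-critical quadratic term is otherwise invisible to $\dot H^1$ compactness. I would first try to use the minimality of $m_c$ in both places, since below $m_c$ all threshold solutions scatter.
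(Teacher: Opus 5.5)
Your architecture matches the paper's: a mass-minimizing element of $\mathcal{B}$, the sequence from Proposition~\ref{P5}, exit times at which $\delta=\eta_*$ (where Proposition~\ref{P4} bounds the scale and minimality upgrades $\dot H^1$ to $H^1$ convergence), and a limiting solution that stays in the small-modulation regime on a half-line. Taking the first exit $s_n>t_n$ and reversing time, instead of the paper's last exit $t_n^+<t_n$, is harmless.

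\textbf{Gap in your reason for $s_n-t_n\to\infty$.} The modulation ODEs \eqref{mo3} control $\alpha$ and $\mu$ only on the time scale $\mu(t)^{-2}$. By \eqref{mo4} and Remark~\ref{Rem1}, $\mu(t_n)\gtrsim\delta(u(t_n))^{-2/3}\to\infty$. So escaping from $\delta\approx 0$ is slow only in the rescaled time $ds=\mu^2\,dt$, not in $t$. What \eqref{mo3} actually gives is $|\tfrac{d}{dt}\mu^{-2}|\lesssim\eta_*$, hence only $s_n-t_n\gtrsim\mu(s_n)^{-2}/\eta_*$, which need not diverge.

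The argument that works (the paper's) is soft. By stability, $u(s_n+\cdot)$ converges, up to translation, to $w$ in $C([-T,0];H^1)$ for every $T$. By Lemma~\ref{GlobalS}, $\delta(w(t))>0$ for all $t$. If $s_n-t_n\to t_0<\infty$ along a subsequence, then $\delta(w(-t_0))=\lim_n\delta(u(t_n))=0$, a contradiction.

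\textbf{The global $L^2$-compactness claim is false.} Your claim that the minimal-mass solution is precompact in $L^2$ modulo translations fails precisely along the sequence of Proposition~\ref{P5}. There, $u(t_n)$ is $o(1)$-close in $\dot H^1$ to a ground-state bubble at scale $\mu_n^{-1}\to 0$. Any $L^2$ sublimit of $u(t_n,\cdot-y_n)$ must coincide with its weak $\dot H^1$ limit, which is $0$, contradicting $M=m_c>0$. The scales $\lambda(t_n)\to\infty$ do not help. Your minimality argument breaks there because the piece carrying the threshold energy is a concentrating bubble close to $W$, whose evolution obeys no uniform spacetime bounds (cf.\ Theorem~\ref{T:nounif}). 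What is true, and all you actually use, is strong $H^1$ convergence modulo translations along times where $\delta\geq\eta_*$.

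\textbf{Construction of the minimizer.} You cannot quote Proposition~\ref{P4} here: it concerns a single solution and gives no bound on $\lambda_n(t_n)$ uniform in $n$. You must rerun its argument for the sequence, which requires two choices of $t_n$.
\begin{enumerate}
\item Take $t_n$ with $\delta(u_n(t_n))>\eta_*$, so that a concentrating profile $\psi$ satisfies $\|\psi\|_{\dot H^1}<\|W\|_{\dot H^1}$ and the threshold classification for the energy-critical NLS applies. If no such late times exist for some $n$, that $u_n$ already proves the proposition.
\item Take $t_n$ after $u_n$ has accumulated spacetime norm $\geq n$, so the translated solutions blow up in both time directions. This is needed because the limiting energy-critical solution may be $W^-$ modulo symmetries, which scatters in only one direction.
\end{enumerate}
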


Finally, we show that the type of solution obtained in Proposition~\ref{P6} cannot exist.

\begin{proposition}[Preclusion of small modulation solutions]\label{P7} There is no $u\in\mathcal{B}$ satisfying \eqref{SMAT}. 
\end{proposition}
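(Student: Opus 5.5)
We argue by contradiction. Suppose $u\in\mathcal{B}$ satisfies \eqref{SMAT}. By Proposition~\ref{P3}, $u$ is global. By Proposition~\ref{P2}, it is precompact in $\dot H^1$ modulo $\lambda(t)\ge1$ and $x(t)$.

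Since $\delta(u(t))\le\eta_*$ for all $t\ge0$, the modulation decomposition
\[
e^{i\theta(t)}\tfrac{1}{\mu(t)} u(t,\tfrac{x-x_0(t)}{\mu(t)})=[1+\alpha(t)]W + h(t)
\]
is available for all forward time. We impose the standard orthogonality conditions on $h$, chosen against the generalized kernel of the linearized operator about $W$. The first step is the usual modulation estimates. The energy constraint $E(u)=E^c(W)$ gives
\[
E^c(u)=E^c(W)-\tfrac13\|u\|_{L^3}^3 .
\]
Combined with coercivity of the linearized energy on the orthogonal complement, this yields
\[
|\alpha(t)|\sim\|h(t)\|_{\dot H^1}\sim\delta(u(t)).
\]
It also yields control of the mass-type quantity: $\|u(t)\|_{L^3}^3\lesssim\delta(u(t))$. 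The modulation ODEs then give
\[
|\dot\theta|+\Bigl|\tfrac{\dot\mu}{\mu^3}\Bigr|+\Bigl|\tfrac{\dot x_0}{\mu}\Bigr|\lesssim\mu^2\delta(u(t)),
\]
with the usual rescaled time $ds=\mu^2\,dt$. On this regime we may take $\lambda(t)\sim\mu(t)$ and $x(t)=x_0(t)$.

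The second step is a modulated (truncated) virial estimate in the spirit of \cite{DodsonMC}. Set
\[
M_R(t)=\Im\int\bar u\nabla u\cdot\nabla\phi_R\,dx,
\]
with $\phi_R$ a weight centered at $x_0(t)$. Because $u\in L^2$ and $u$ is $\dot H^1$-close to a rescaled $W$, we have $|M_R|\lesssim R\|u\|_{L^2}\|\nabla u\|_{L^2}$. The time derivative equals
\[
8\delta(u)+\tfrac{4}{3}\cdot\tfrac{?}{}\|u\|_{L^3}^3\text{-type positive term}
\]
plus tail errors. The defocusing quadratic term contributes with a favorable sign. The virial identity $\partial_t M=8\|\nabla u\|^2-8\|u\|_4^4+c\|u\|_3^3$ together with the threshold energy shows that the main term is $\gtrsim\delta(u(t))$. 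In the modulation regime, the tail errors are $\lesssim\delta(u)$ times a small factor once $R\mu(t)$ is large. This holds because $W$'s tails contribute errors of order $\alpha$ and $h$, which are themselves $O(\delta)$. The moving center adds terms $\dot x_0\cdot\int\bar u\nabla u$, which are bounded by $\mu\delta\cdot\delta$ and hence acceptable. Integrating over $[T_1,T_2]$ gives
\[
\int_{T_1}^{T_2}\delta(u(t))\,dt\lesssim R\sup_{t}\|u\|_{L^2}\|\nabla u\|_{L^2}\Bigl(\delta(u(T_1))\mu(T_1)^{-1}+\delta(u(T_2))\mu(T_2)^{-1}\Bigr).
\]
The refined right-hand side uses that the momentum-type quantity $M_R$ itself vanishes to first order at $W$.

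The third step is a bootstrap. Using $\bigl|\tfrac{d}{dt}\mu^{-2}\bigr|\lesssim\delta$, the integrated virial bound controls the variation of $\mu^{-1}$ by the endpoint values of $\delta$. Choosing $T_2=t_n$ with $\delta(u(t_n))\to0$ (Proposition~\ref{P5}) gives $\int_{T}^\infty\delta\lesssim\delta(u(T))/\mu(T)$. A Gronwall-type argument then shows that $\mu(t)\to\infty$ is impossible unless $\delta\equiv0$. Conversely, since $\lambda\ge1$ and $\mu\sim\lambda$, $\mu$ stays bounded below, so $\int_0^\infty\delta\,dt<\infty$ with $\delta(u(t))$ decaying and $\mu$ converging. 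Precompactness then yields a limit point with $\delta=0$, that is, $u(t_n)$ converging modulo symmetries to $W$ in $\dot H^1$. Since $W\notin L^2$ in $4d$ while $\|u\|_{L^2}$ is conserved and finite, and $\|u\|_{L^3}^3\lesssim\delta\to0$, we derive a contradiction. Specifically, $\mu$ bounded together with $u(t_n)\to W_{\mu_\infty}$ in $\dot H^1$ forces $\|u(t_n)\|_{L^3}$ to stay bounded below, since $W\in L^3$ in $4d$ at fixed scale, which contradicts $\|u\|_{L^3}^3\lesssim\delta\to0$. Alternatively, the energy identity with $\delta=0$ forces $\|u\|_{L^3}=0$, so $u\equiv0$.

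The main obstacle is the second step: obtaining error terms in the modulated virial estimate that are genuinely $o(\delta)$ rather than $O(\delta)$, uniformly in time. This requires careful use of the cutoff around $x_0(t)$, the $L^2$ bound (where $W$'s non-$L^2$ tail is delicate in $4d$, so logarithmic losses must be absorbed by the choice of $R$), and the bound on $\dot x_0$.
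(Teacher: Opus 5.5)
Your architecture is the paper's, and the moving center and the use of $\mu^{-2}$ instead of $\log\mu$ are harmless variants. The shared steps are:
\begin{enumerate}
\item a localized virial quantity that vanishes to first order at the modulated ground state, so $|M_R|\lesssim R^2\delta$;
\item a lower bound $\tfrac{d}{dt}M_R\geq c\,\delta$ for $R=C\mu_{\min}^{-1}$;
\item a bootstrap giving near-constancy of the scale;
\item a contradiction with Proposition~\ref{P5}, because $\delta(u(t_n))\to 0$ at a fixed scale is incompatible with $\|u(t_n)\|_{L^3}\to 0$. This is exactly how \eqref{mo4z} is proved.
\end{enumerate}

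\textbf{The gap: the mass-critical term.} This is precisely the new difficulty in the double critical setting, and your treatment of it fails as written.
\begin{itemize}
\item The sign. If you use the threshold energy as you propose, then $8\|\nabla u\|_{L^2}^2-8\|u\|_{L^4}^4=8\delta-\tfrac{32}{3}\|u\|_{L^3}^3$. So the full virial at $R=\infty$ is $8\delta-\tfrac{16}{3}\|u\|_{L^3}^3$, and the cubic term enters with the \emph{unfavorable} sign. It looks favorable only if you bound $F^c_\infty$ through sharp Sobolev instead.
\item The localization, which is the more serious issue. Take a weight with $w_R=|x|^2$ on $\{|x|\leq R\}$ and $\nabla w_R$ supported in $\{|x|\leq 2R\}$. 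Then $\Delta w_R=r^{-3}\partial_r(r^3\partial_r w_R)$ is negative on part of $\{R\leq|x|\leq 2R\}$, since $r^3\partial_r w_R$ drops from $2R^4$ to $0$ there.
\item Consequently $\tfrac23\int\Delta w_R|u|^3$ produces an error of size $\|u\|_{L^3}^3$. With your bound $\|u\|_{L^3}^3\lesssim\delta$, this error is $O(\delta)$ with no small constant, so it cannot be absorbed into $c\,\delta$. Your claimed lower bound on $\tfrac{d}{dt}M_R$ therefore does not follow.
\end{itemize}

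\textbf{The fix.} You need the quadratic bound $\|u\|_{L^3}^3\lesssim\delta^2$ of \eqref{mo3z}.
\begin{itemize}
\item Your own coercivity argument already gives it: one obtains $\|h\|_{\dot H^1}^2+\|u\|_{L^3}^3\lesssim\alpha^2\sim\delta^2$, as in Lemma~\ref{Lce1}.
\item It also follows in one line from sharp Sobolev and $E(u)=E^c(W)$: $\tfrac13\|u\|_{L^3}^3=E^c(W)-E^c(u)\leq \delta^2/(4\|\nabla W\|_{L^2}^2)$.
\item With this, the cubic term is $\lesssim\eta_*\delta$ and purely perturbative.
\end{itemize}

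\textbf{Step 3 bookkeeping.} Your displayed modulation bounds and integrated virial inequality are not scale-consistent. The correct statements are:
\begin{itemize}
\item $|\dot\mu|\mu^{-3}+|\dot c|\mu^{-1}\lesssim\delta$, where $c(t)$ is the bubble center (cf.\ \eqref{mo2z}).
\item $\int_{T_1}^{T_2}\delta\,dt\lesssim \mu_{\min}^{-2}[\delta(T_1)+\delta(T_2)]\lesssim\eta_*\mu_{\min}^{-2}$. Note it is $\mu_{\min}^{-2}$ over the whole interval, not $\mu(T)^{-1}$.
\item The moving-center error is $\lesssim R\mu(t)\delta^2\sim(\mu(t)/\mu_{\min})\delta^2$. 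This is acceptable only under a bootstrap hypothesis such as $\mu_{\max}\leq 2\mu_{\min}$. The paper instead fixes the center and bootstraps $|x(t)|\leq 1$.
\end{itemize}
Given these, $|\tfrac{d}{dt}\mu^{-2}|\lesssim\delta$ yields $\mu_{\max}\leq(1+C\eta_*)\mu_{\min}$ directly. The smallness comes from the endpoint values $\delta\leq\eta_*$, so no Gronwall argument or decay of $\delta$ is needed.
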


The primary tool for proving Proposition~\ref{P7} is the modulated virial estimate, which is a version of the localized virial estimate that takes into account the fact that the solution stays near the orbit of the ground state.  The presence of the mass-critical nonlinear term leads to the consideration of an additional term of the form
\[
A(t) = \int \Delta[w_R] |u(t,x)|^3\,dx,
\]
where $w_R$ is the virial weight. However, the modulation analysis allows us to estimate this term as $\mathcal{O}(\delta(u(t)))^2$, which allows us to treat this term perturbatively.

Using the modulated virial estimate, we can prove that the modulation parameters of the solution are nearly constant.  However, this turns out to be inconsistent with the fact that $\delta(u(t_n))\to 0$ along some sequence $t_n\to\infty$ (cf. Proposition~\ref{P5}) and therefore leads to a contradiction in our setting. The proof of this fact relies on the presence of the mass-critical perturbation.  In the energy-critical case, for example, the near-constancy of the modulation parameters instead leads to the exponential convergence of threshold solutions to the orbit of the ground state (cf. \cite{DuyckaMerle2009,MaMiaoMurphyZheng2025}).

Finally, Proposition~\ref{P6} and Proposition~\ref{P7} together imply that $\mathcal{B}=\emptyset$ and hence we conclude the proof of Theorem~\ref{Th2}.

\subsection{Organization of the paper}  After introducing some notation below, we import various preliminary results in Section~\ref{S:preli}, including variational analysis of the ground state, well-posedness and stability for \eqref{NLS}, the double-track linear profile decomposition, and some approximation results for \eqref{NLS}. Sections~\ref{S:Compactness}--\ref{S:preclude} contain the proofs of Propositions~\ref{P2}--\ref{P7} described above, which collectively prove our main result, Theorem~\ref{Th2}.  In Appendix~\ref{S:Modulation} we provide the details of the modulation analysis, and in Appendix~\ref{sec:App} we prove a result demonstrating that there can be no uniform estimate on the spacetime norms of solutions as one approaches the sharp energy threshold (see Theorem~\ref{T:nounif}).

\subsection*{Notation.}
Given positive quantities $A$ and $B$, we use $A\lesssim B$ or $B\gtrsim A$ to denote $A\leq CB$ for some positive constant $C>0$. We also use $A\sim B$ to indicate  $A \lesssim B \lesssim A$.  For $u:I\times \R^4\rightarrow \C$, $I\subset \R$, we write
\[ \|  u \|_{L_{t}^{q}L^{r}_{x}(I\times \R^4)}=\left\|  \|u(t) \|_{L^{r}_{x}(\R^4)} \right \|_{L^{q}_{t}(I)}
\]
with $1\leq q\leq r\leq\infty$. We recall that a pair of exponents $(q, r)$ is {admissible} if $2 \leq q, r \leq \infty$ and $\tfrac{2}{q} + \tfrac{4}{r} = 2$. Given a spacetime slab $I \times \mathbb{R}^4$, we define
\[
\|u\|_{S(I)} := \sup \left\{ \|u\|_{L^q_t L^r_x(I \times \mathbb{R}^4)} : (q, r) \text{ is admissible} \right\}.
\]
We let $\<\nabla\>=(1-\Delta)^{1/2}$ and we define the Sobolev norms 
\[
\|u\|_{H^{s,r}(\R^4)}:=\|\langle\nabla\rangle^{s} u\|_{L^{r}_{x}(\R^4)}.
\]
        
We will use the expression $X\pm$ to denote $X\pm\eps$ for some small $\eps>0$. We will make use of the $L^2$ and $\dot H^1$ inner products given by $(f,g)_{L^2}=\Re\int f\bar g$ and $(f,g)_{\dot H^1}=(\nabla f,\nabla g)_{L^2}$. We will use $A^\perp$ to denote the orthogonal complement of a set $A$. 
 
Moreover, for $f\in H^{1}(\R^4)$ we denote
\[
\delta(f):=\|\nabla W\|_{{L}^{2}}^{2}-\|\nabla f\|_{{L}^{2}}^{2}.
\]
		
\subsection*{Acknowledgements} A. Ardila was supported by the Universidad del Valle under research project CI-71425. J. Murphy
was partially supported by NSF grant DMS-2350225 and Simons Foundation grant MPS-TSM-00006622. J. Zheng was supported by National Key R$\&$D program of China: 2021YFA1002500 and NSF grant of China (No. 12271051).

\section{Preliminaries}\label{S:preli}
		
We will need some Littlewood--Paley theory. We let $\psi\in C_c^\infty(\R^4)$ be nonnegative and radial, with $\psi(x)=1$ if $|x|\leq 1$  and $\psi(x)=0$ if $|x|\geq \frac{11}{10}$. For $N\in 2^{\mathbb{N}}$, we define the Littlewood-Paley projections as Fourier multiplier operators, namely:  
\[
\widehat{P_{\leq N}f}(\xi):=\psi(\tfrac{\xi}{N})\hat{f}(\xi), \quad \widehat{P_{> N}f}(\xi):=\bigl[1-\psi(\tfrac{\xi}{N})\bigr]\hat{f}(\xi),
\]
and
\[
\widehat{P_{N}f}(\xi):=\bigl[\psi(\tfrac{\xi}{N})-\psi(\tfrac{2\xi}{N})\bigr]\hat{f}(\xi).
\]
These operators are bounded on Sobolev spaces and obey the following standard estimates.
\begin{lemma}[Bernstein inequalities]\label{BIN}
Fix $1\leq p\leq q\leq \infty$ and $s\geq0$. For $f: \R^4\to \C$, we have
\begin{align*}
\| P_{N}f   \|_{L^{q}(\R^4)}&\lesssim N^{\frac{4}{p}-\frac{4}{q}}\| P_{N}f   \|_{L^{p}(\R^4)},\\
\| P_{> N}f   \|_{L^{p}(\R^4)}&\lesssim N^{-s}\| |\nabla|^{s} P_{> N}f   \|_{L^{p}(\R^4)}.
\end{align*}
\end{lemma}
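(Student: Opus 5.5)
The plan is to derive both estimates from the standard Littlewood--Paley multiplier representation, writing $P_N f = \check\phi_N * f$, where $\phi_N(\xi)=\psi(\xi/N)-\psi(2\xi/N)$ is smooth and supported in the annulus $\{N/2\le|\xi|\le \tfrac{11}{10}N\}$.

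For the first inequality we use a fattened projector. Let $\tilde P_N$ have symbol $\tilde\phi(\xi/N)$, where $\tilde\phi\in C_c^\infty$ equals $1$ on the support of $\phi_1$; then $P_N f=\tilde P_N P_N f$. The kernel of $\tilde P_N$ is $N^4\check{\tilde\phi}(N\cdot)$, a Schwartz function, so
\[
\|N^4\check{\tilde\phi}(N\cdot)\|_{L^r}=N^{4-4/r}\|\check{\tilde\phi}\|_{L^r}.
\]
Choose $r$ with $1+\tfrac1q=\tfrac1r+\tfrac1p$, which is possible since $p\le q$. Young's inequality then gives
\[
\|P_Nf\|_{L^q}\lesssim N^{4-4/r}\|P_Nf\|_{L^p},\qquad 4-\tfrac4r=\tfrac4p-\tfrac4q,
\]
which is the first estimate.

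For the second inequality, write $P_{>N}f = m(\nabla/N)\,|\nabla|^{-s}\,|\nabla|^{s}P_{>N}f$ with $m(\xi)=\tilde\chi(\xi)$, where $\tilde\chi$ is a smooth function vanishing near the origin and equal to $1$ on $\{|\xi|\ge 1\}\supset\operatorname{supp}(1-\psi)$. The operator $|\nabla|^{-s}\tilde\chi(\nabla/N)$ has symbol $N^{-s}\,g(\xi/N)$ with $g(\xi)=|\xi|^{-s}\tilde\chi(\xi)$. This $g$ is smooth and satisfies symbol estimates $|\partial^\alpha g(\xi)|\lesssim\langle\xi\rangle^{-s-|\alpha|}$. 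For $s>0$ the inverse Fourier transform $\check g$ is integrable: it is smooth and decays rapidly away from the origin, and near the origin it has only an integrable singularity of order at most $|x|^{s-4}$. Hence $\|N^4\check g(N\cdot)\|_{L^1}=\|\check g\|_{L^1}<\infty$ uniformly in $N$, and Young's inequality with $L^1$ gives the bound on $L^p$ for every $1\le p\le\infty$.

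The main (mild) obstacle is precisely this $L^1$ bound for $\check g$ when $g$ is not compactly supported. To handle it, split $g=\sum_{M\ge1}\phi_M\,g$ dyadically. Each piece has kernel with $L^1$ norm $\lesssim M^{-s}$ by scaling, since $\phi_M g$ is $M^{-s}$ times a fixed bump rescaled to frequency $M$, with uniform control. Summing the geometric series over $M\ge 1$ converges because $s>0$. The case $s=0$ is trivial, since $P_{>N}=I-P_{\le N}$ is bounded on $L^p$ with a kernel of uniformly bounded $L^1$ norm.
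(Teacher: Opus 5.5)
Your proof is correct. The paper gives no proof of this lemma; it quotes the Bernstein inequalities as standard estimates. Your argument is the standard textbook route: Young's inequality against the rescaled Schwartz kernel of a fattened projector for the first bound, and an $L^1$ bound on the kernel of $N^{-s}g(\nabla/N)$, $g=|\xi|^{-s}\tilde\chi$, for the second.

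The remaining remarks are cosmetic:
\begin{itemize}
\item Your heuristic ``singularity of order at most $|x|^{s-4}$'' is accurate only for $0<s<4$. At $s=4$ the singularity is logarithmic, and for $s>4$ one has $g\in L^1$, so $g^{\vee}$ is bounded.
\item This does not matter, because your dyadic bound $\sum_M\|(\phi_M g)^{\vee}\|_{L^1}\lesssim 1+\sum_{M\ge 2}M^{-s}<\infty$ is rigorous for every $s>0$. It only needs the sum to start low enough that $\sum_M\phi_M\equiv 1$ on $\operatorname{supp} g$. Since $\sum_{M\ge1}\phi_M=1-\psi(2\,\cdot)$, it suffices to choose $\tilde\chi$ vanishing for $|\xi|\le\tfrac35$.
\item For $s=0$ the second inequality is a literal identity, so no appeal to boundedness of $P_{>N}$ is needed.
\end{itemize}
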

		
\subsection{Variational analysis} We recall some well-known properties of the ground state $W\in\dot H^1$, given by the explicit formula $W(x)=(1+\frac{1}{8}|x|^{2})^{-1}$.
		
The sharp Sobolev inequality takes the form
\begin{equation}\label{GI}
\|f\|_{L^{4}(\R^4)}\leq C_{GN}\|\nabla f\|_{L^{2}(\R^4)},
\end{equation}
with 
\begin{equation}\label{C_GN}
C_{GN}=\frac{\|W\|_{L^{4}}}{\|\nabla W\|_{L^{2}}}.
\end{equation}
We have the following variational characterization of the ground state: If $f$ is a nonzero function satisfying $\|f\|_{L^{4}}= C_{GN}\|\nabla f\|_{L^{2}}$, then 
\[
f(x)=e^{i\theta}\lambda W(\lambda(x-x_{0}))
\]
for some $\theta\in\R$, $\lambda>0$ and $x_{0}\in \R^4$.
		
The ground state $W$ also satisfies the following Pohozaev identities:
\begin{equation}\label{PoQ}
\|\nabla W\|^{2}_{L^{2}}=\| W\|^{4}_{L^{4}} \qtq{and} E^{c}(W)=\tfrac{1}{4}\|\nabla W\|^{2}_{L^{2}}.
\end{equation}
		
We also remark that for $f\in\dot H^1$ satisfying $\|\nabla  f\|_{L^{2}}\leq \|\nabla W\|_{L^{2}}$, we have
\begin{align}\label{IneW}
\tfrac{\|\nabla  f\|^{2}_{L^{2}}}{\|\nabla W\|^{2}_{L^{2}}}\leq \tfrac{E^{c}(f)}{E^{c}(W)}
\end{align}
(see \cite[Claim 2.6]{DuyckaMerle2009}).
		
We next show that the energy constraint in \eqref{Thres} persists in time. 

\begin{lemma}\label{GlobalS}
Assume that $u_{0}\in H^{1}(\R^4)$ satisfies
\begin{equation}\label{Condition11}
E(u_{0})=E^{c}(W)\quad \text{and}\quad \|\nabla u_{0}\|_{L^{2}}<\|\nabla W\|_{L^{2}}.
\end{equation}
Then the solution $u$ to \eqref{NLS} with $u|_{t=0}=u_0$ satisfies 
\begin{equation}\label{PositiveP}
\|\nabla u(t)\|_{L^{2}}<\|\nabla W\|_{L^{2}}
\end{equation}
throughout its maximal lifespan.  
\end{lemma}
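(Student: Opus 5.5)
The plan is a continuity argument built on energy conservation, the defocusing sign of the cubic-power term, and the variational characterization of $W$. First I record that for every $t$ in the maximal lifespan $I_{\max}$ we have, by conservation of energy,
\[
E^c(u(t)) = E(u(t)) - \tfrac13\|u(t)\|_{L^3}^3 = E^c(W) - \tfrac13\|u(t)\|_{L^3}^3 .
\]
The map $t\mapsto u(t)$ is continuous into $H^1$. Since $u(t)\neq 0$ (mass is conserved and $u_0\neq0$; indeed $E(u_0)=E^c(W)>0$ forces $u_0\ne0$), we get $\|u(t)\|_{L^3}>0$. Hence
\[
E^c(u(t))<E^c(W) \qquad\text{for all } t\in I_{\max}.
\]

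Next I argue by contradiction. Suppose there is a time $t_1\in I_{\max}$ with $\|\nabla u(t_1)\|_{L^2}\geq\|\nabla W\|_{L^2}$. The function $t\mapsto\|\nabla u(t)\|_{L^2}$ is continuous, and at $t=0$ it is strictly less than $\|\nabla W\|_{L^2}$. By the intermediate value theorem there is then a time $t_*$ between $0$ and $t_1$ at which $\|\nabla u(t_*)\|_{L^2}=\|\nabla W\|_{L^2}$.

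At this time $t_*$ I apply \eqref{IneW}. Its hypothesis $\|\nabla f\|_{L^2}\le\|\nabla W\|_{L^2}$ holds with equality for $f=u(t_*)$, so
\[
1=\frac{\|\nabla u(t_*)\|_{L^2}^2}{\|\nabla W\|_{L^2}^2}\le \frac{E^c(u(t_*))}{E^c(W)}<1,
\]
which is a contradiction. Alternatively, one can argue directly at this step. Using the sharp Sobolev inequality \eqref{GI}, \eqref{C_GN} and \eqref{PoQ}, we have $\|u(t_*)\|_{L^4}^4\le C_{GN}^4\|\nabla W\|_{L^2}^4=\|\nabla W\|_{L^2}^2$. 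Therefore
\[
E^c(u(t_*))\ge \tfrac12\|\nabla W\|_{L^2}^2-\tfrac14\|\nabla W\|_{L^2}^2=E^c(W),
\]
which contradicts the strict inequality $E^c(u(t_*))<E^c(W)$ established in the first step.

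There is no real obstacle in this argument. The one point requiring care is that the strict inequality comes only from the defocusing $L^3$ term, and therefore relies on $u\not\equiv0$. The rest is a routine use of continuity of the $H^1$ flow together with the sharp Sobolev inequality.
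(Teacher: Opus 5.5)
Your proposal is correct and is essentially the paper's proof: both locate a time where $\|\nabla u\|_{L^2}=\|\nabla W\|_{L^2}$ and contradict \eqref{IneW} via the strict inequality $E^c(u)<E(u)=E^c(W)$, which comes from the positive $L^3$ term. You also make explicit the intermediate value step and the fact that $u\not\equiv 0$, both of which the paper leaves implicit.
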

		
\begin{proof} Suppose that there exists $t_{0}$ such  that $\|\nabla u(t_{0})\|^{2}_{L^{2}}=\|\nabla W\|^{2}_{L^{2}}$. By \eqref{IneW} and \eqref{Condition11}, we obtain the contradiction
\[
1=\frac{\|\nabla  u(t_{0})\|^{2}_{L^{2}}}{\|\nabla W\|^{2}_{L^{2}}}\leq \frac{E^{c}(u(t_{0}))}{E^{c}(W)}<\frac{E(u(t_{0}))}{E^{c}(W)}=1.
\]
\end{proof}
		
\subsection{Virial identities}\label{S:virial} Let $\phi$ be a smooth radial function satisfying
\[
\phi(x)=\begin{cases} |x|^{2},& \quad |x|\leq 1\\ 0,& \quad |x|\geq 2,
\end{cases}
\quad \text{with}\quad |\partial^{\alpha}\phi(x)|\lesssim |x|^{2-|\alpha|}
\]
for all multiindices $\alpha$.  Given $R>1$ we define
\begin{equation}\label{WRR}
w_{R}(x)=R^{2}\phi\left(\tfrac{x}{R}\right)
\end{equation}
and introduce the localized virial functional
\begin{equation}\label{I_R}
I_{R}[u]=2\Im\int_{\R^4} \nabla w_{R} \cdot\overline{u} \nabla u \,dx.
\end{equation}
		
By direct calculation, we have the following virial identity:
		
\begin{lemma}\label{VirialIden}
Let $R\in [1, \infty)$. Assume that $u$ solves \eqref{NLS}. Then we have
\begin{equation}\label{LocalVirial}
\tfrac{d}{d t}I_{R}[u]=F_{R}[u],
\end{equation}
where
\begin{align*}
F_{R}[u]&:=\int_{\R^4}\big\{(- \Delta \Delta w_{R})|u|^{2}-
\Delta[w_{R}]|u|^{4}+4\Re \overline{u_{j}} u_{k} \partial_{jk}[w_{R}]\big\}\, dx\\
&\quad +\tfrac{2}{3}\int_{\R^4}\Delta[w_{R}]|u|^{3}\, dx\\
&=:F^{c}_{R}[u]+\tfrac{2}{3}\int_{\R^4}\Delta[w_{R}]|u|^{3}\, dx.
\end{align*}
\end{lemma}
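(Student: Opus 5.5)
The identity is a direct computation from the equation. We differentiate $I_R[u]=2\Im\int \nabla w_R\cdot\bar u\nabla u\,dx$ in time and substitute $\partial_t u = i\Delta u + i|u|^2u - i|u|u$. The contribution of each piece is linear in that piece, so we may treat the three terms separately. We first carry out the computation for Schwartz solutions and then pass to general $H^1$ solutions by the well-posedness and stability theory of \eqref{NLS} together with density. The identity is local in time, so integrating it in $t$ is stable under $H^1$ limits: every term of $F_R$ is controlled by $\|u\|_{H^1}$, since $|\Delta w_R|\lesssim 1$ and $|\partial_{jk}w_R|\lesssim 1$.

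\emph{Step 1: the general formula.} Write $\partial_t u = i\Delta u + iN(u)$, where $N(u)=|u|^2u-|u|u$ is real-linear in the sense that $N(u)=g(|u|^2)u$ with $g(s)=s-s^{1/2}$. Integrating by parts in the standard way gives
\[
\tfrac{d}{dt}I_R[u]=\int(-\Delta\Delta w_R)|u|^2+4\Re\,\overline{u_j}u_k\partial_{jk}w_R\,dx+2\int\Delta w_R\,\big[G(|u|^2)-g(|u|^2)|u|^2\big]dx\cdot(\text{sign}),
\]
where $G'=g$. The linear part is the classical Morawetz computation, whose two outputs are $-\Delta\Delta w_R|u|^2$ and $4\Re\,\overline{u_j}u_k\partial_{jk}w_R$. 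For the nonlinear part, the identity $\Re(\bar N\nabla u)-\Re(\bar u\nabla N)$ reduces, after writing $N=g(|u|^2)u$, to $-\nabla[g(|u|^2)|u|^2-G(|u|^2)]$. One integration by parts then produces $\int\Delta w_R\,\{\dots\}$.

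\emph{Step 2: the pure powers.} For a focusing power $+|u|^p u$ on the right side of $(i\partial_t+\Delta)u=\dots$, the term is $-\frac{p}{p+2}\int\Delta w_R|u|^{p+2}$, up to the normalization of $I_R$ with its factor $2$. Concretely, we check that the cubic focusing term ($p=2$) gives $-\int\Delta w_R|u|^4$. This agrees with the known energy-critical virial identity, which fixes the constant. The defocusing quadratic term $-|u|u$ ($p=1$) comes with the opposite sign and coefficient $\frac{p}{p+2}\cdot 2=\frac23$, which gives $+\frac23\int\Delta w_R|u|^3$. Tracking the coefficient $\frac{p}{p+2}$ consistently between the two powers is the only place where an error is likely, so we calibrate against $p=2$ as described.

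\emph{Step 3: justification.} The main (mild) obstacle is rigor for $H^1$ solutions rather than the algebra. We handle it by approximating $u_0$ in $H^1$ by smooth data and using the stability result to get convergence in $C_tH^1$ on compact subintervals of $I_{\max}$. We then pass to the limit in the integrated form $I_R[u(t_2)]-I_R[u(t_1)]=\int_{t_1}^{t_2}F_R[u]\,dt$. Finally, $F_R[u(t)]$ is continuous in $t$, which yields \eqref{LocalVirial}.
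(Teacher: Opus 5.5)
Your proposal is correct and follows the same route as the paper, which simply asserts the identity by direct calculation. In fact your Step~1 formula holds with sign $+1$: the nonlinear contribution is $2\int\Delta w_R\,[G(|u|^2)-g(|u|^2)|u|^2]\,dx=-\int\Delta w_R|u|^4\,dx+\frac23\int\Delta w_R|u|^3\,dx$, so no calibration against the energy-critical identity is needed. (In Step~2, note that $+|u|^pu$ on the right of $(i\partial_t+\Delta)u=\cdots$ is the \emph{defocusing} sign, although your final coefficients are right.)

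One small fix to Step~3: because $z\mapsto|z|z$ is not smooth at $0$, Schwartz data need not give Schwartz solutions. Do the computation instead for $H^2$ solutions, which suffice for the integrations by parts since $w_R$ is compactly supported, and then pass to the $H^1$ limit as you describe.
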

		
When $R=\infty$, we denote $F^{c}_{\infty}[u]=8G[u]$,
where
\[
G[f]:=\|\nabla f\|^{2}_{L_{x}^{2}}-\|f\|^{4}_{L_{x}^{4}}.
\]
		
\begin{lemma}\label{Virialzero}
Fix $R\in [1, \infty]$, $\theta\in \R$, $\lambda >0$ and $x_0\in\R^4$. Then we have
\[
F^{c}_{R}[e^{i\theta}\lambda  W(\lambda\cdot+x_0)]=0.
\]
\end{lemma}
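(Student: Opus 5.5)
The plan is to reduce to the case $\theta=0$, $\lambda=1$, $x_0=0$ by symmetry, and then verify the case of $W$ itself through a direct integration by parts that uses only the elliptic equation \eqref{ellipC}.

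\emph{Reduction.} Write $W_{\theta,\lambda,x_0}(x):=e^{i\theta}\lambda W(\lambda x+x_0)$. Every term of $F^c_R$ involves only $|u|^2$, $|u|^4$ and $\Re\,\overline{u_j}u_k$, so the phase $e^{i\theta}$ drops out immediately. The function $W_{0,\lambda,x_0}$ again solves $-\Delta f=|f|^2f$, since that equation is invariant under the $\dot H^1$ scaling and under translations. It therefore suffices to prove the following statement: for \emph{every} solution $Q\in\dot H^1$ of $-\Delta Q=|Q|^2Q$ with the decay of $W$ (namely $Q\sim|x|^{-2}$ and $\nabla Q\sim|x|^{-3}$), and for every smooth weight $w$ with $|\partial^\alpha w|\lesssim\langle x\rangle^{2-|\alpha|}$, we have
\[
\int\big\{-\Delta\Delta w\,|Q|^2-\Delta w\,|Q|^4+4\Re\,\overline{Q_j}Q_k\partial_{jk}w\big\}\,dx=0.
\]
For $R=\infty$ we have $w=|x|^2$, and the claim becomes $8G[Q]=0$. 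This follows from the Pohozaev identity \eqref{PoQ}, or, equivalently, from the computation below with $w=|x|^2$, where the decay of $W$ in four dimensions justifies the integrations by parts.

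\emph{Key computation.} The cleanest route is to view $F^c_R[u]$ as $\frac{d}{dt}I_R[u]$ along the flow of \eqref{ECgNLS}. Then $F^c_R[Q]$ is exactly this time derivative evaluated at the static solution $u(t)=Q$. Since $I_R[Q(t)]$ is constant in $t$, the derivative vanishes.

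To avoid having to justify the virial identity for an $\dot H^1$ (non-$L^2$) solution, I would instead check the identity directly.

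1. Multiply $-\Delta Q=|Q|^2Q$ by $\nabla w\cdot\nabla\overline Q+\frac12\Delta w\,\overline Q$ and take real parts. This is the usual Pohozaev/Morawetz multiplier.
2. On the nonlinear side, the term $\Re|Q|^2Q\,\nabla w\cdot\nabla\overline Q=\frac14\nabla w\cdot\nabla|Q|^4$ integrates to $-\frac14\int\Delta w|Q|^4$. Adding $\frac12\int\Delta w|Q|^4$ gives a total of $\frac14\int\Delta w|Q|^4$.
3. On the linear side, the standard identity gives $-\Re\int\Delta Q\,\nabla w\cdot\nabla\overline Q=\int\Re\,\overline{Q_j}Q_k\partial_{jk}w-\frac12\int\Delta w|\nabla Q|^2$. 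Also, $-\frac12\Re\int\Delta Q\,\Delta w\,\overline Q=\frac12\int\Delta w|\nabla Q|^2-\frac14\int\Delta\Delta w|Q|^2$.
4. Summing and multiplying by $4$ yields exactly $F^c_R[Q]=0$.

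Every integrand is either compactly supported (for finite $R$) or, when $R=\infty$, decays like $|x|^{-4}$, which is integrable against the weights. Boundary terms vanish, because $w_R$ has compact support, or by the decay of $W$.

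The only step I expect to need care is the bookkeeping in step 3, where the signs and constants must be matched with those in Lemma~\ref{VirialIden}. I would double-check these by testing the case $w=|x|^2$: there $\partial_{jk}w=2\delta_{jk}$, $\Delta w=8$ and $\Delta\Delta w=0$, and the computation must reproduce $8G[Q]$, which vanishes by \eqref{PoQ}.
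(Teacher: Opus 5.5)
Your proof is correct, and it takes a slightly different route from the paper's for finite $R$. Your multiplier computation checks out: with $\nabla w\cdot\nabla\overline Q+\frac12\Delta w\,\overline Q$ you get
\[
\int\Re\,\overline{Q_j}Q_k\partial_{jk}w-\tfrac14\int\Delta\Delta w\,|Q|^2-\tfrac14\int\Delta w\,|Q|^4=0,
\]
which is exactly $\frac14F^c_R[Q]=0$.

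For $R=\infty$ you and the paper argue identically: $G$ is invariant under the symmetries, and $G[W]=0$ by \eqref{PoQ}.

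For finite $R$, the paper takes precisely the ``cleanest route'' you mention and then set aside. The function $e^{i\theta}\lambda W(\lambda x+x_0)$ is a static solution of \eqref{ECgNLS}. $I_R$ vanishes on it identically, because it is a constant phase times a real function. So Lemma~\ref{VirialIden} (without the $|u|^3$ term) gives $F^c_R=\frac{d}{dt}I_R=0$.

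Your direct computation is the time-independent form of that same virial identity, so both arguments prove the same integral identity. The paper's version is a one-liner, but it tacitly applies a virial identity stated for $H^1$ solutions of \eqref{NLS} to a non-$L^2$ solution of a different equation. That is harmless here, since $w_R$ has compact support and $W$ is smooth, but it is left unsaid. Your version is self-contained, and it shows the identity holds for every sufficiently decaying solution of $-\Delta Q=|Q|^2Q$ and every weight with the stated growth.

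One small correction to your justification. $|x|^{-4}$ is not integrable at infinity in $\R^4$; indeed $W\notin L^2(\R^4)$. What is true is that the integrands that actually occur decay like $|x|^{-6}$ or faster. The only $|x|^{-4}$ density, $|Q|^2$, is paired with $\Delta\Delta w$, which vanishes or is $O(\langle x\rangle^{-2})$. The boundary terms on spheres of radius $\rho$ are $O(\rho^{-2})$.
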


\begin{proof} If $R=\infty$, then by a change of variable sand \eqref{PoQ} we get
\[
F^c_{\infty}[e^{i\theta}\lambda  W(\lambda\cdot+x_0)]=8G[e^{i\theta}\lambda  W(\lambda\cdot+x_0)]=8G[W]=0.
\]
Now assume $R\in [1, \infty)$. Fix $\theta\in \R$ and $\lambda >0$. Since 
\[
u(t,x)=e^{i\theta}\lambda  W(\lambda x+x_0)
\]
is a solution to \eqref{ECgNLS} and $I_{R}[e^{i\theta}\lambda W(\lambda\cdot+x_0)]=0$, Lemma~\ref{VirialIden} implies
\[
F^{c}_{R}[e^{i\theta}\lambda  W(\lambda\cdot+x_0)]=0,
\]
as desired. \end{proof}

\subsection{Well-posedness, stability, and concentration-compactness} We recall the following result from \cite[Theorem 4.1]{AkaIbraKikuNawa2013}. 
		
\begin{proposition}[Local well-posedness]\label{LWP}
For any $u_{0}\in H^{1}(\R^4)$, there exists a unique solution $u\in C(I, H^{1}(\R^4))$ to \eqref{NLS} on some interval $I=(-T_{{min}}, T_{{max}})\ni 0$ such that:
\begin{enumerate}[label=\rm{(\roman*)}]
\item  The solution satisfies the conservation of energy and mass 
\begin{equation*}
E(u(t))=E(u_{0})\quad  \text{and} \quad M(u(t))=M(u_{0}) \qtq{for all}t\in I.
\end{equation*}
\item $u\in L_{t}^{q}H_{x}^{1,r}(K\times \R^4)$  for every  compact time interval $K\subset I$ and for any admissible pair $(q,r)$.
\item  If $T_{{max}}$ is finite, then 
\[
\| u\|_{L_{t,x}^{6}((0,T_{{max}})\times \R^4)\cap L_{t,x}^{3}((0,T_{{max}}) \times \R^4)}=\infty.
\]
An analogous statement holds when $T_{{min}}$ is finite.
\end{enumerate}
\end{proposition}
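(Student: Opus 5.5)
This is the standard local theory for \eqref{NLS}, and I will prove it by a contraction mapping argument in Strichartz spaces at the $H^1$ level, in the style of Cazenave--Weissler for energy-critical problems. Since the cubic term is $\dot H^1$-critical in $d=4$, the time of existence cannot depend only on $\|u_0\|_{H^1}$. Instead, I will use the smallness of the free evolution in a critical spacetime norm on a short interval.

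\textbf{Step 1: the contraction.} Fix the norm
\[
\|u\|_{X(I)}:=\|\langle\nabla\rangle u\|_{L_t^{3}L_x^{12/5}(I\times\R^4)},
\]
where $(3,\tfrac{12}{5})$ is admissible. By Sobolev embedding this controls $\|u\|_{L^6_{t,x}}$ (since $H^{1,12/5}\hookrightarrow L^6$ in $d=4$), and also $\|u\|_{L^3_{t,x}}$ after interpolating with $L^\infty_tL^2_x$ via $S(I)$. Strichartz and the fractional chain rule give
\[
\|\langle\nabla\rangle(|u|^2u)\|_{L_t^{3/2}L_x^{12/11}}\lesssim \|u\|_{L^6_{t,x}}^2\|\langle\nabla\rangle u\|_{L^3_tL^{12/5}_x}.
\]
For the mass-critical term, using the admissible pair $(3,\tfrac{12}{5})$ with dual $(\tfrac32,\tfrac{12}{7})$, I estimate
\[
\|\langle\nabla\rangle(|u|u)\|_{L^{3/2}_tL^{12/7}_x}\lesssim\|u\|_{L^3_{t,x}}\|\langle\nabla\rangle u\|_{L^3_tL^{12/5}_x}.
\]
I then choose $I$ so that $\|\langle\nabla\rangle e^{it\Delta}u_0\|_{L^3_tL^{12/5}_x(I)}\le\eta$. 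This is possible for a short interval by monotone convergence, since the global Strichartz norm is finite. The Duhamel map is then a contraction on a small ball of $X(I)\cap C(I,H^1)$, measured in a weaker $L^2$-level metric. Differences are controlled using $|\,|u|^pu-|v|^pv|\lesssim(|u|^p+|v|^p)|u-v|$.

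\textbf{Step 2: uniqueness and regularity.} Uniqueness in $C(I,H^1)$ follows by the same difference estimates on small subintervals. Part (ii), the bounds in $L^q_tH^{1,r}_x$ on compact $K\subset I$, follows by subdividing $K$ into finitely many intervals where the $L^6_{t,x}\cap L^3_{t,x}$ norm is small and applying Strichartz on each.

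\textbf{Step 3: conservation laws (i).} I will obtain these by the usual regularization. Approximate $u_0$ by $H^2$ data, prove conservation for the smooth solutions by direct computation, and pass to the limit using continuous dependence from Step 1.

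\textbf{Step 4: blowup criterion (iii).} If $\|u\|_{L^6_{t,x}\cap L^3_{t,x}((0,T_{\max}))}<\infty$, I split $(0,T_{\max})$ into finitely many pieces on which this norm is small. Strichartz then bounds $\|\langle\nabla\rangle u\|_{L^3_tL^{12/5}_x}$ on each piece, hence on the whole interval. Consequently $u(t)$ converges in $H^1$ as $t\to T_{\max}$ by the Duhamel formula, and the tail of the free evolution from near $T_{\max}$ is small. Step 1 then extends the solution past $T_{\max}$, which is a contradiction.

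The main obstacle is the nonlinear estimate for the quadratic term $|u|u$ at the derivative level. The nonlinearity is only $C^{1}$ with Lipschitz derivative, so the estimate should use the chain rule for one full derivative, $|\nabla(|u|u)|\lesssim|u||\nabla u|$, which is valid. The difference estimate, however, must be done at the $L^2$ level rather than in $H^1$. I would handle this by running the contraction in the metric $\|u-v\|_{L^3_tL^{12/5}_x}$ on an $H^1$-bounded ball, which is complete.
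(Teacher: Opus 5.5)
The paper does not prove this proposition; it cites \cite[Theorem 4.1]{AkaIbraKikuNawa2013}. Your overall scheme is the standard argument behind that citation: a Strichartz contraction driven by smallness of the free evolution, subdivision for (ii) and (iii), and regularization for (i). But Step 1 does not close as written, because the exponents are wrong for $d=4$.

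\begin{itemize}
\item The pair $(3,\tfrac{12}{5})$ is not admissible, since $\tfrac{2}{3}+4\cdot\tfrac{5}{12}=\tfrac{7}{3}\neq 2$. The admissible pair with $r=\tfrac{12}{5}$ is $(6,\tfrac{12}{5})$.
\item Consequently $\|\langle\nabla\rangle u\|_{L^3_tL^{12/5}_x}$ controls only $\|u\|_{L^3_tL^6_x}$, not $\|u\|_{L^6_{t,x}}$.
\item The H\"older bookkeeping also fails. Both $\|u\|_{L^6_{t,x}}^2\|\langle\nabla\rangle u\|_{L^3_tL^{12/5}_x}$ and $\|u\|_{L^3_{t,x}}\|\langle\nabla\rangle u\|_{L^3_tL^{12/5}_x}$ bound the nonlinearity in $L^{3/2}_tL^{4/3}_x$, not in $L^{3/2}_tL^{12/11}_x$ or $L^{3/2}_tL^{12/7}_x$.
\item $L^{3/2}_tL^{4/3}_x$ is not a dual Strichartz space, because $(3,4)$ is not admissible. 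So no closed estimate results.
\end{itemize}

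The repair is routine. Work with $\|\langle\nabla\rangle u\|_{L^6_tL^{12/5}_x\cap L^3_{t,x}}$; both pairs are admissible, and the first controls $L^6_{t,x}$ by Sobolev. Then place both nonlinearities in $L^{3/2}_{t,x}$, the dual of $(3,3)$:
\[
\|\langle\nabla\rangle(|u|^2u)\|_{L^{3/2}_{t,x}}\lesssim\|u\|_{L^6_{t,x}}^2\|\langle\nabla\rangle u\|_{L^3_{t,x}},\qquad
\|\langle\nabla\rangle(|u|u)\|_{L^{3/2}_{t,x}}\lesssim\|u\|_{L^3_{t,x}}\|\langle\nabla\rangle u\|_{L^3_{t,x}}.
\]
This is exactly the error norm used in Lemma~\ref{stabi}.

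There are two smaller points.

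First, your claim that differences ``must'' be estimated at the $L^2$ level is a misdiagnosis. The map $z\mapsto|z|z$ is $C^{1,1}$, since its derivatives $\tfrac32|z|$ and $\tfrac{z^2}{2|z|}$ are Lipschitz. Hence $|\nabla(|u|u-|v|v)|\lesssim|u||\nabla(u-v)|+|u-v||\nabla v|$, and the contraction closes in the full $H^1$-level Strichartz norm. You actually need this. Step 3 passes to the limit in the energy, whose term $\|u\|_{L^4}^4$ is $\dot H^1$-critical. An $L^2$-level Lipschitz bound plus uniform $H^1$ bounds does not give convergence of $\|\nabla u_n(t)\|_{L^2}$ or $\|u_n(t)\|_{L^4}$. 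So the ``continuous dependence from Step 1'' must be continuous dependence in $H^1$.

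Second, the difference estimates alone give uniqueness only among solutions whose Strichartz norms are locally finite, hence small on short intervals. For uniqueness in all of $C(I,H^1)$, you need the usual extra step: on a short interval, use continuity in $H^1$ to write $u(t)$ as a fixed bounded function plus an $H^1$-small remainder, and run the $L^2$-level difference estimate with that splitting.

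With these corrections, Steps 2--4 go through as you describe.
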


Given an interval $I$, we define $S(I)$ as
\[
S(I):=\| u\|_{L_{t}^{\infty}L_x^2(I\times \R^4)\cap L_{t}^{2}L_x^{4}(I \times \R^4)}.
\]   
Combining Strichartz estimates, conservation of mass, and a standard continuity argument, we further obtain the following (see also \cite[Lemma~2.3]{Luo2024}). 
		
\begin{lemma}[Persistence of regularity]\label{Rpr}
Let $u: \R\times \R^4\rightarrow \C$ be a solution to \eqref{NLS} with $\|u\|_{L_{t,x}^{6}(\R\times\R^4)\cap L_{t,x}^{3}(\R\times\R^4)}\leq L<\infty$, then for any $t_0\in\R$, 
\begin{equation}
\left\||\nabla|^s u\right\|_{S\left(\mathbb{R} \right)} \leq C\left(L, M\left(u\left(t_0\right)\right)\right)\left\||\nabla|^s u\left(t_0\right)\right\|_{L_x^2}, \quad s \in [0,1].
\end{equation}
\end{lemma}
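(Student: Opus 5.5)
We argue with Strichartz estimates on short time intervals, chosen so that the nonlinear terms are small relative to the spacetime bound $L$. We also use that mass conservation makes $L^2$ control free. Fix $t_0$ and $s\in[0,1]$. Since $\|u\|_{L^6_{t,x}\cap L^3_{t,x}(\R\times\R^4)}\le L$, for any $\eta>0$ we partition $\R$ into $J=J(L,\eta)$ consecutive intervals $I_j$ with $\|u\|_{L^6_{t,x}(I_j)}+\|u\|_{L^3_{t,x}(I_j)}\le\eta$. On each $I_j=[a_j,a_{j+1}]$ we write Duhamel from $a_j$ and apply Strichartz:
\[
\||\nabla|^s u\|_{S(I_j)}\lesssim \||\nabla|^s u(a_j)\|_{L^2}+\||\nabla|^s(|u|^2u)\|_{N(I_j)}+\||\nabla|^s(|u|u)\|_{N(I_j)},
\]
where $N$ is a dual Strichartz norm. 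Both nonlinearities are $C^1$ in $(u,\bar u)$, so the fractional chain rule for $s\in[0,1]$ applies.

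For the cubic term we use the dual pair $L^{3/2}_tL^{3/2}_x$... Let us be precise in 4d. Take $|\nabla|^s u\in L^3_tL^{12/5}_x$, which is admissible since $2/3+4\cdot5/12=2/3+5/3\neq 2$. So choose instead
\[
\||\nabla|^s(|u|^2u)\|_{L^{2}_tL^{4/3}_x}\lesssim \|u\|_{L^6_{t,x}}^2\||\nabla|^s u\|_{L^6_tL^{12/5}_x}.
\]
Here $(6,12/5)$ is admissible because $1/3+5/3=2$, and $(2,4/3)$ is the dual of the admissible pair $(2,4)$ ($1+1=2$). Hölder checks out: $1/2=2/6+1/6$ and $3/4=2/6+5/12$.

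For the quadratic, mass-critical term we use
\[
\||\nabla|^s(|u|u)\|_{L^{3/2}_tL^{3/2}_x}\lesssim\|u\|_{L^3_{t,x}}\||\nabla|^s u\|_{L^3_{t,x}},
\]
with $(3,3)$ admissible because $2/3+4/3=2$, so $(3/2,3/2)$ is dual admissible. This gives
\[
\||\nabla|^s u\|_{S(I_j)}\le C\||\nabla|^s u(a_j)\|_{L^2}+C(\eta^2+\eta)\||\nabla|^s u\|_{S(I_j)}.
\]

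A point to handle is that the continuity argument needs $\||\nabla|^s u\|_{S(I_j)}$ to be finite a priori. Proposition~\ref{LWP}(ii) gives this on compact subintervals, and we can then let the intervals exhaust $I_j$; for unbounded end intervals we truncate and use monotone convergence. Choosing $\eta$ small depending only on the Strichartz constant, we absorb the nonlinear terms and obtain $\||\nabla|^s u\|_{S(I_j)}\le 2C\||\nabla|^s u(a_j)\|_{L^2}$. Iterating forward and backward from the interval containing $t_0$ over the $J$ intervals gives the bound with constant $(2C)^{J}$, which depends only on $L$.

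The dependence on $M(u(t_0))$ in the statement is harmless; with the estimates above the constant actually depends only on $L$. Mass would enter only if one used inhomogeneous $H^{s}$ norms, with the $s=0$ bound serving as the base case. The main obstacle, if any, is the correct bookkeeping of exponents so that both nonlinearities are controlled by the given $L^6_{t,x}$ and $L^3_{t,x}$ norms. The choices above settle this, so the argument is a routine continuity argument.
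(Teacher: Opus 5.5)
Your argument is correct and is the standard one the paper has in mind: Strichartz estimates on finitely many intervals where $\|u\|_{L^6_{t,x}}+\|u\|_{L^3_{t,x}}$ is small, with valid exponents $(6,\tfrac{12}{5})$ and dual $(2,\tfrac43)$ for the cubic term and $(3,3)$ with dual $(\tfrac32,\tfrac32)$ for the quadratic term, followed by absorption and iteration. The only difference from the paper's sketch is your observation that mass conservation is not needed for the homogeneous estimate, which is consistent with, and slightly sharper than, the stated dependence $C(L,M(u(t_0)))$. When writing it up, delete the abandoned $L^3_tL^{12/5}_x$ false start.
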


As an immediate consequence of Lemma~\ref{Rpr}, we have the following sufficient condition for scattering in $H^1$:
		
\begin{corollary}\label{ScatterCondi}
Let $u_{0}\in H^{1}(\R^4)$, and let $u$ be the solution to \eqref{NLS} with $u|_{t=0}=u_{0}$. If $u$ is global and $u\in L_{t,x}^{6}(\R\times\R^4)\cap L_{t,x}^{3}(\R\times\R^4)$, then $u$ scatters in $H^{1}(\R^4)$ as $t\to \pm \infty$.
\end{corollary}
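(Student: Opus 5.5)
The plan is to show that the forward and backward scattering states exist by the Cook criterion. By Lemma~\ref{Rpr} with $s=1$ and $s=0$, the hypothesis $\|u\|_{L^6_{t,x}\cap L^3_{t,x}(\R\times\R^4)}\leq L<\infty$ yields $\|\langle\nabla\rangle u\|_{S(\R)}<\infty$, with a bound depending on $L$, $M(u_0)$ and $\|u_0\|_{H^1}$. The Duhamel formula shows that $e^{-it\Delta}u(t)$ is Cauchy in $H^1$ as $t\to\infty$ once we know that
\[
\Bigl\|\int_{t_1}^{t_2} e^{-is\Delta}\bigl(-|u|^2u+|u|u\bigr)(s)\,ds\Bigr\|_{H^1}\to 0 \qtq{as} t_1,t_2\to\infty.
\]
We then set $u_+:=\lim_{t\to\infty}e^{-it\Delta}u(t)$. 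Unitarity of $e^{it\Delta}$ on $H^1$ gives $\|u(t)-e^{it\Delta}u_+\|_{H^1}\to 0$. The case $t\to-\infty$ is identical.

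To bound the integral, apply the dual Strichartz estimate on $[t_1,t_2]$ and estimate each nonlinear term in a dual admissible norm.

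For the cubic term, use $L^{2}_tH^{1,4/3}_x$, i.e.\ the dual of $(2,4)$. H\"older together with the fractional chain rule gives
\[
\|\langle\nabla\rangle(|u|^2u)\|_{L^2_tL^{4/3}_x}\lesssim \|u\|_{L^{6}_{t,x}}^{2}\,\|\langle\nabla\rangle u\|_{L^{6}_tL^{12/5}_x},
\]
and the pair $(6,12/5)$ is admissible, since $\tfrac{2}{6}+\tfrac{4}{12/5}=\tfrac13+\tfrac53=2$. The exponents balance in space, $\tfrac{2}{6}+\tfrac{5}{12}=\tfrac34$, and in time, $\tfrac{2}{6}+\tfrac16=\tfrac12$.

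For the quadratic term, use the same dual norm. Here
\[
\|\langle\nabla\rangle(|u|u)\|_{L^2_tL^{4/3}_x}\lesssim \|u\|_{L^3_{t,x}}\,\|\langle\nabla\rangle u\|_{L^6_tL^{12/5}_x},
\]
and the exponents again balance: $\tfrac13+\tfrac5{12}=\tfrac34$ in space and $\tfrac13+\tfrac16=\tfrac12$ in time.

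Both right-hand sides are finite over $\R$, and the factors $\|u\|_{L^6_{t,x}([t_1,t_2])}$ and $\|u\|_{L^3_{t,x}([t_1,t_2])}$ tend to $0$ as $t_1,t_2\to\infty$. This gives the Cauchy property.

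The only delicate point is the derivative estimate for the nonlinearity $|u|u$, which is only $C^{1}$. Its derivative is bounded by $|u|\,|\nabla u|$ pointwise, so the chain rule for $s=1$ holds directly and no fractional chain rule is needed. This is the step I would check most carefully; the rest is routine given Lemma~\ref{Rpr}.
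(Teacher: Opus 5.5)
Your proposal is correct and is the standard argument the paper intends when it calls this an immediate consequence of Lemma~\ref{Rpr}: persistence of regularity gives $\langle\nabla\rangle u\in S(\R)$. The dual endpoint Strichartz estimate, combined with your correctly balanced H\"older bounds in $L^2_tL^{4/3}_x$ and the vanishing of the $L^6_{t,x}$ and $L^3_{t,x}$ tails, then makes $e^{-it\Delta}u(t)$ Cauchy in $H^1$. The pointwise bounds $|\nabla(|u|^2u)|\lesssim|u|^2|\nabla u|$ and $|\nabla(|u|u)|\lesssim|u|\,|\nabla u|$ suffice for both terms, so no fractional chain rule is needed even for the cubic term.
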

		
We next record a stability result for \eqref{NLS} (see e.g. \cite[Lemma 2.4]{Luo2024}):
		
\begin{lemma}[Stability result, \cite{Luo2024}]\label{stabi}
Let $t_0\in I\subset\R$. Suppose $\tilde{u}:I\times\R^4\to\C$ solves
\[ 
(i\partial_{t}+\Delta) \tilde{u} =|\tilde{u}|\tilde{u}-|\tilde{u}|^{2}\tilde{u}+e,\quad  \tilde{u}(t_{0})=\tilde{u}_{0}
\]
for some $e:I\times\R^4\rightarrow \C$. Assume also that
\[
\| \tilde{u}\|_{L_t^{\infty}H_x^1(I\times\R^4)\cap L_{t,x}^{6}(I\times \R^4)\cap L_{t,x}^{3}(I \times \R^4)}\leq A.
\]
Let $ u \in C(I, H^1(\mathbb{R}^4)) $ be a solution of \eqref{NLS} defined on some interval $I \ni t_0$, and suppose 
\[
\|{u}  \|_{L_{t}^{\infty}H^{1}_{x}(I)}\leq M
\]
for some $M>0$.  Then there exists $\epsilon_{0}=\epsilon_0(A,M)>0$ such that if $0<\epsilon<\epsilon_{0}$ and
\begin{equation}\label{stability-smallness}
\|u(t_{0})-\tilde{u}(t_{0}) \|_{{H}^{1}_{x}}\leq \epsilon\qtq{and} \|\langle \nabla\rangle e  \|_{L_{t,x}^{\frac{3}{2}}(I\times\R^4)}\leq \epsilon,
\end{equation}
then 
\begin{equation}\label{SCN55}
\|\langle \nabla\rangle(u-\tilde{u})\|_{S(I)}\lesssim_{A,M}\epsilon^{\kappa}
\end{equation}
for some $\kappa\in (0,1)$.
\end{lemma}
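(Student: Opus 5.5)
The plan is the standard perturbative argument for NLS with combined nonlinearities, run in the inhomogeneous Strichartz spaces built on $\langle\nabla\rangle$. Set $w=u-\tilde u$, so that $w(t_0)=u(t_0)-\tilde u(t_0)$ and
\[
(i\partial_t+\Delta)w=\bigl[F(\tilde u+w)-F(\tilde u)\bigr]-e,\qquad F(z)=|z|z-|z|^2z .
\]
First I will divide $I$ into $J=J(A,\eta)$ consecutive subintervals $I_j=[t_j,t_{j+1}]$ on which
\[
\|\tilde u\|_{L^6_{t,x}(I_j)}+\|\tilde u\|_{L^3_{t,x}(I_j)}\le\eta ,
\]
with $\eta$ small to be fixed. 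On each $I_j$, Strichartz and the equation for $\tilde u$ give bounds on $\|\langle\nabla\rangle\tilde u\|_{S(I_j)}$ in terms of $A$, $\eta$ and $\eps$. The dual estimate uses the norm $L^{3/2}_{t,x}$, which is dual to the admissible $L^3_{t,x}$ in four dimensions. The $L^\infty_tH^1_x$ bound $A$ controls the $L^\infty_tL^2_x$ and $L^\infty_t\dot H^1_x$ components, and interpolation then makes all of $\|\langle\nabla\rangle\tilde u\|_{S(I_j)}\lesssim_A 1$, after possibly subdividing further.

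On a given $I_j$ I will apply Strichartz to $w$ in $X(I_j)=\|\langle\nabla\rangle w\|_{S(I_j)}$. The cubic difference is bounded through the usual energy-critical estimate,
\[
\|\nabla(|z|^2z)\|_{L^{3/2}}\lesssim \|z\|^2_{L^6}\|\nabla z\|_{L^3}
\]
in $L^{3/2}_{t,x}$, together with Sobolev ($\|w\|_{L^6_{t,x}}\lesssim\|\nabla w\|_{L^3_tL^{12/5}_x}$). The quadratic (mass-critical) difference is bounded by
\[
\|\langle\nabla\rangle(|z|z)\|_{L^{3/2}_{t,x}}\lesssim\|z\|_{L^3}\|\langle\nabla\rangle z\|_{L^3}.
\]
Together these give
\[
X(I_j)\lesssim \|w(t_j)\|_{H^1}+\eps+(\eta+\eta^2)X+X^2+X^3+\text{(cross terms)} .
\]

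The main obstacle is the derivative of the differences. Since $\nabla F$ is only Hölder continuous (of order 1 for the quadratic term), $\nabla[F(\tilde u+w)-F(\tilde u)]$ contains terms like $|w|\,|\nabla\tilde u|$, which involve the large norm $\|\nabla\tilde u\|$ rather than smallness of $\tilde u$ in $L^6$ or $L^3$. I plan to handle this the way the Tao--Visan--Zhang and Killip--Visan stability lemmas do. First run the bootstrap for $w$ in a derivative-free norm, using the smallness of $\tilde u$ in $L^6\cap L^3$. Then estimate $\langle\nabla\rangle w$ with the fractional chain rule for Hölder-continuous derivatives, placing the large factor $\|\langle\nabla\rangle\tilde u\|$ against a \emph{positive power} of the smaller norm of $w$. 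This is exactly where the loss $\eps^{\kappa}$ with $\kappa<1$ comes from.

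The assumed bound $\|u\|_{L^\infty_tH^1}\le M$ is used so that the $L^\infty_tL^2$ part of $w$ stays bounded regardless, which closes the continuity argument. On each $I_j$ this yields $X(I_j)\lesssim_A(\|w(t_j)\|_{H^1}+\eps)^{\kappa_j}$ provided the right-hand side is below a threshold. Iterating over the $J$ intervals, with $\|w(t_{j+1})\|_{H^1}\le X(I_j)$, produces a final exponent $\kappa=\prod\kappa_j>0$ and the smallness threshold $\eps_0(A,M)$. Summing over $j$ gives \eqref{SCN55}.
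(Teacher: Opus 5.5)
The paper itself does not prove this lemma; it imports it from \cite{Luo2024}. The argument there is the standard one your first two paragraphs set up: partition $I$, apply Strichartz to $w=u-\tilde u$ in $X(I_j)=\|\langle\nabla\rangle w\|_{S(I_j)}$, close a continuity argument on each piece, and induct.

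The gap is in the paragraph you call the main obstacle. You misdiagnose the difficulty, and the remedy you propose does not work as written. In four dimensions $z\mapsto|z|z$ has a Lipschitz derivative and $z\mapsto|z|^2z$ is a polynomial, so the ordinary chain rule gives pointwise bounds. The only term linear in $w$ with a non-small coefficient is the quadratic cross term, for which
\[
\bigl\| |w|\,|\nabla\tilde u| \bigr\|_{L^{3/2}_{t,x}(I_j)}\le \|w\|_{L^3_{t,x}(I_j)}\,\|\nabla\tilde u\|_{L^3_{t,x}(I_j)}.
\]
You have already shown $\|\langle\nabla\rangle\tilde u\|_{S(I)}\lesssim_A 1$. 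So you can simply refine the partition so that $\|\tilde u\|_{L^6_{t,x}(I_j)}+\|\langle\nabla\rangle\tilde u\|_{L^3_{t,x}(I_j)}\le\eta$. The cubic cross term $|\tilde u|\,|w|\,|\nabla\tilde u|$ already carries the small factor $\|\tilde u\|_{L^6_{t,x}}$. Then
\[
X(I_j)\lesssim\|w(t_j)\|_{H^1}+\epsilon+\eta X(I_j)+X(I_j)^2+X(I_j)^3,
\]
hence $X(I_j)\lesssim\|w(t_j)\|_{H^1}+\epsilon$. Induction over the $J(A)$ pieces gives $\|\langle\nabla\rangle w\|_{S(I)}\le C(A)\,\epsilon\le C(A)\,\epsilon^{\kappa}$. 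Under the hypotheses as stated, you need no fractional chain rule, no loss of powers, and no use of $M$.

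Your alternative scheme is to close a bootstrap in a derivative-free norm, then recover $\langle\nabla\rangle w$ with a H\"older-type chain rule at the cost of a power. That is the device for energy-critical stability in high dimensions, where the derivative of the nonlinearity is only H\"older of order less than one. Here it fails at its first step: a derivative-free, mass-level norm cannot control the cubic differences $|w|^2\tilde u$ and $|w|^3$ in $L^{3/2}_{t,x}$. These require $\|w\|_{L^6_{t,x}}$, an $\dot H^1$-level quantity for which the bounds $A$ and $M$ give no smallness. Strichartz provides it only through $\|w\|_{L^6_{t,x}}\lesssim\|\nabla w\|_{L^6_tL^{12/5}_x}$; your Sobolev step has the wrong time exponent, since $(6,\tfrac{12}{5})$ is the admissible pair.

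For the same reason, your account of where $\kappa<1$ comes from is off: with a Lipschitz derivative there is no loss. A genuine loss appears in the variant of Remark~\ref{R:stabi}, where the data difference is only bounded in $H^1$ and small after free evolution, so one has to interpolate. That is also where a bound such as $M$ on $u$ is naturally used. Finally, taking a loss $\kappa_j$ on every subinterval and composing them would make $\kappa=\prod_j\kappa_j$ depend on $A$; in the standard arguments any loss is incurred only once.
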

		
\begin{remark}\label{R:stabi} The smallness condition on the initial data in \eqref{stability-smallness} may be replaced with the weaker assumption
\[
\|u(t_0)-\tilde u(t_0)\|_{H_x^1} \lesssim 1 \qtq{and} \| \langle\nabla\rangle e^{i(t-t_0)\Delta}[u(t_0)-\tilde u(t_0)]\|_{L_{t,x}^3(I\times\R^4)} \leq \eps.
\]
See, for example, Proposition~3.2 in \cite{ChengMiaoZhao2016}.
\end{remark}

We now state the following double-track profile decomposition. For a proof, see Lemma 3.6 in \cite{Luo2024}.
	
\begin{theorem}[Linear profile decomposition, \cite{Luo2024}]\label{Profi}
Let $\left\{f_{n}\right\}_{n\in \mathbb{N}}$ be a bounded sequence of  functions in $H^{1}(\R^4)$. Passing to a subsequence, there exist  $J^{\ast}\in \left\{0,1,2,\ldots\right\}\cup\left\{\infty\right\}$, nonzero profiles $\{\psi^j\}_j \subset \dot{H}^1(\mathbb{R}^4) \cup L^2(\mathbb{R}^4)$, and parameters $\{(t_n^j, x_n^j, \xi_n^j, \lambda_n^j)\}_{j,n} \subset \mathbb{R} \times \mathbb{R}^4 \times \mathbb{R}^4 \times (0, \infty)$ satisfying the following properties.

\bigskip

For any finite $1 \leq j \leq J^*$, the parameters satisfy:
\begin{itemize}
\item  $\lim\limits_{n\to\infty} |\xi_n^j| \lesssim_j 1$,
\item $\lim\limits_{n\to\infty} t_n^j =: t_\infty^j \in \{0, \pm\infty\}$,
\item $\lim\limits_{n\to\infty} \lambda_n^j =: \lambda_\infty^j \in \{0, 1, \infty\}$,
\item $t_n^j \equiv 0$ if $t_\infty^j = 0$,
\item $\lambda_n^j \equiv 1$ if $\lambda_\infty^j = 1$,
\item $\xi_n^j \equiv 0$ if $\lambda_\infty^j \in \{0, 1\}$.
\end{itemize}

Moreover, 
\[
\psi^j\in\begin{cases} \dot H^1 & \lambda_\infty^j = 0, \\ H^1 & \lambda_\infty^j = 1, \\ L^2 & \lambda_\infty^j = \infty.\end{cases}
\]

For any finite $1\leq j\leq J^*$ we have the decomposition
\begin{equation}\label{Dcom}
f_{n}=\sum^{J}_{j=1}\phi_{n}^{j}+R_n^J
\end{equation}
for each finite $1\leq J\leq J^{\ast}$, with
\begin{equation}\label{fucti}
\phi_{n}^{j} = \begin{cases}
(\lambda_n^j)^{-1} \bigl[ e^{i t_n^j \Delta} \bigl( P_{>(\lambda_n^j)^\theta} \psi^j \bigr) \bigr] \bigl( \tfrac{x - x_n^j}{\lambda_n^j} \bigr), & \lambda_\infty^j = 0, \\
\bigl[ e^{i t_n^j \Delta} \psi^j \bigr] (x - x_n^j), & \lambda_\infty^j = 1, \\
(\lambda_n^j)^{-2} e^{i x \cdot \xi_n^j} \bigl[ e^{i t_n^j \Delta} \bigl( P_{\leq (\lambda_n^j)^\theta} \psi^j \bigr) \bigr] \bigl( \tfrac{x - x_n^j}{\lambda_n^j} \bigr) & \lambda_\infty^j = \infty
\end{cases}
\end{equation}
for some $0<\theta<1$. 

Furthermore,  we have the following properties:
\begin{itemize}
\item Vanishing of the remainder:
\begin{equation}\label{Sr} 
\limsup_{J\to J^*}\limsup_{n\rightarrow\infty}\|e^{it\Delta}R_n^J\|_{L_{t,x}^{6}(\R\times\R^4)\cap L_{t,x}^{3}(\R\times\R^4)}=0.
\end{equation}
\item Orthogonality: for all $1\leq j\neq k\leq J^{\ast}$
\begin{equation}\label{Pow}
\tfrac{\lambda_n^k}{\lambda_n^j} + \tfrac{\lambda_n^j}{\lambda_n^k} + \lambda_n^k|\xi_n^j - \xi_n^k| + \bigl|t_n^k - \bigl(\tfrac{\lambda_n^k}{\lambda_n^j}\bigr)^2 t_n^j\bigr| \\
+ \bigl|\tfrac{x_n^j - x_n^k - 2t_n^k (\lambda_n^k)^2 (\xi_n^j - \xi_n^k)}{\lambda_n^k}\bigr| \to \infty.
\end{equation}		
\item Decoupling: for any $J\in \mathbb{N}$ and $s\in\{0,1\}$,
\begin{align}\label{PE11}
\| |\nabla|^s \phi_n\|_2^2 &= \sum_{j=1}^J \| |\nabla|^s \phi_{n}^{j}\|_2^2 + \| |\nabla|^s R_n^J\|_2^2 + o_n(1) \qtq{as}n\to\infty, \\
E(\phi_n) &= \sum_{j=1}^J E(\phi_{n}^{j}) + E(R_n^J) + o_n(1)\qtq{as}n\to\infty.
\end{align}
\end{itemize}

\end{theorem}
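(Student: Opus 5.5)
This theorem is imported verbatim from Lemma~3.6 of \cite{Luo2024}, so the plan below describes how I would reconstruct its proof rather than a new argument. The idea is to combine two separate profile decompositions: the standard $\dot H^1$ (energy-critical) decomposition of Keraani, which detects bubbles concentrating at high frequency ($\lambda_\infty^j=0$), and the $L^2$ (mass-critical) decomposition of Merle--Vega and B\'egout--Vargas, which detects profiles spreading at low frequency ($\lambda_\infty^j=\infty$), possibly with Galilean boosts $\xi_n^j$. Profiles at unit scale, $\lambda_\infty^j=1$, then sit in $H^1$.

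\textbf{Step 1: extract one profile.} I would start from a refined Strichartz inequality controlling $\|e^{it\Delta}f\|_{L^6_{t,x}}$ and $\|e^{it\Delta}f\|_{L^3_{t,x}}$ by a power of the $H^1$ norm times a power of
\[
\sup_N\|e^{it\Delta}P_Nf\|_{L^\infty_{t,x}}\,N^{-1}
\]
(for $L^6$) or a Bourgain-type sup over dyadic cubes (for $L^3$). If the linear evolution of $f_n$ has non-negligible $L^6\cap L^3$ norm, this yields a frequency $N_n$ (or cube), a time $t_n$ and a point $x_n$ at which $f_n$ concentrates. Set $\lambda_n=N_n^{-1}$.
\begin{itemize}
\item If $N_n\to\infty$, take the weak $\dot H^1$ limit of the rescaled data.
\item If $N_n\sim1$, rescale to $N_n\equiv1$ and take the weak $H^1$ limit.
\item If $N_n\to0$, take the weak $L^2$ limit after removing the modulation $e^{ix\cdot\xi_n}$.
\end{itemize}
In the low-frequency case $|\xi_n|\lesssim1$ follows from the $H^1$ bound. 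Each profile carries a lower bound on its norm, which feeds the exhaustion argument.

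\textbf{Step 2: truncate frequencies.} The cutoffs $P_{>(\lambda_n^j)^\theta}$ and $P_{\le(\lambda_n^j)^\theta}$ make each $\phi_n^j$ lie in $H^1$ with norms comparable to $\|\psi^j\|$ in the relevant space. The discarded pieces go to $0$ in $H^1$, because $\lambda_n^{-\theta}\to\infty$ in the concentrating case and $\lambda_n^{\theta}\to\infty$ in the spreading case. Consequently the $H^1$-boundedness of the remainder is preserved.

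\textbf{Step 3: iterate, and prove orthogonality and decoupling.} Subtracting each profile and iterating gives \eqref{Dcom}. Weak convergence of the rescaled remainders to zero gives asymptotic Pythagorean expansions in $L^2$ and $\dot H^1$ for $s\in\{0,1\}$; the frequency truncation handles the cross terms between scales, since a profile with $\lambda_\infty=0$ contributes $o(1)$ in $L^2$, and symmetrically for $\lambda_\infty=\infty$. Two profiles that violate \eqref{Pow} would be merged, which gives orthogonality. Summability of the squared norms forces the concentration parameter of the remainder to $0$, and the refined Strichartz estimate then gives \eqref{Sr}.

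\textbf{Step 4: energy decoupling.} The quadratic term comes from Step 3. For the $L^4$ and $L^3$ terms, I would reduce to finitely many profiles plus a small remainder and use orthogonality, together with the facts that $\phi_n^j\to0$ in $L^3$ when $\lambda_\infty^j=0$ and in $L^4$ when $\lambda_\infty^j=\infty$. Time translation with $t_n\to\pm\infty$ also kills these norms by dispersion.

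I expect Steps 1 and 3 to be the main obstacle: proving the refined Strichartz estimate simultaneously in $L^3$ and $L^6$, and keeping the mass-critical Galilean parameters $\xi_n$ compatible with the energy scaling. This is exactly why I would rely on \cite{Luo2024}.
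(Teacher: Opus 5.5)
This matches the paper, which gives no proof of its own and simply imports the result from Lemma~3.6 of \cite{Luo2024}, and your outline (Keraani-type $\dot H^1$ extraction for the concentrating profiles, Merle--Vega/B\'egout--Vargas $L^2$ extraction with bounded Galilean boosts for the spreading ones, frequency truncation into $H^1$, then iteration) is the double-track construction being cited. One correction to Steps~2--3: the discarded pieces need not lie in $H^1$ at all (e.g.\ $P_{\le N}W\notin L^2(\R^4)$ because $\widehat W(\xi)\sim|\xi|^{-2}$ near $0$), so what is really used is that they vanish only in the native norm while the retained pieces satisfy $\|\phi_n^j\|_{L^2}\lesssim(\lambda_n^j)^{1-\theta}\|\nabla\psi^j\|_{L^2}$ if $\lambda_\infty^j=0$ and $\|\nabla\phi_n^j-i\xi_n^j\phi_n^j\|_{L^2}\lesssim(\lambda_n^j)^{\theta-1}\|\psi^j\|_{L^2}$ if $\lambda_\infty^j=\infty$ (this is exactly where $\theta<1$ enters), and the latter shows that boosted spreading profiles are not $o(1)$ in $\dot H^1$, so their $\dot H^1$ cross terms must be handled through $(\nabla\phi_n^j,\nabla R_n^J)_{L^2}=|\xi_n^j|^2(\phi_n^j,R_n^J)_{L^2}+o(1)$ and $L^2$ weak convergence rather than by the ``symmetry'' you invoke.
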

		
\subsection{Approximation by the cubic NLS} 
An important step in the proof of Theorem~\ref{Th2} is the approximation of the \eqref{NLS} by the  energy-critical NLS in the  high-scale limit.
		
We have the following scattering result for the 4$d$ energy critical NLS:
		
\begin{theorem}[Sub-threshold scattering for \eqref{ECgNLS} \cite{D5}]\label{WGPQN}
Let $u_{0}\in \dot{H}^{1}(\R^4)$. Assume 
\[
E^{c}(u_{0})<E^{c}(W)\qtq{and}\|\nabla u_{0}\|_{L^{2}}<\|\nabla W\|_{L^{2}}.
\]
Then there exists a unique global solution $u\in C(\R, \dot{H}^{1}(\R^4))$ of  the energy-critical NLS \eqref{ECgNLS} with $u|_{t=0}=u_{0}$, which satisfies
\[
\| u  \|_{L^{6}_{t,x}(\R\times\R^4)} \lesssim 1.
\]
In addition, the solution scatters in both time directions in $\dot{H}^{1}(\R^4)$.
\end{theorem}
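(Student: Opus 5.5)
This statement is Dodson's sub-threshold scattering theorem for the $4d$ focusing energy-critical NLS, including the non-radial case. The paper imports it from \cite{D5} rather than proving it. If I had to reproduce it, I would follow the Kenig--Merle concentration-compactness/rigidity roadmap, adapted as in \cite{D5}.

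\emph{Step 1: coercivity and global bounds.} By the variational characterization \eqref{GI}--\eqref{C_GN} and \eqref{IneW}, the conditions $E^c(u_0)<E^c(W)$ and $\|\nabla u_0\|_{L^2}<\|\nabla W\|_{L^2}$ propagate in time. The argument is the continuity argument of Lemma~\ref{GlobalS}, and it gives
\[
\sup_t\|\nabla u(t)\|_{L^2}^2\le (1-\delta)\|\nabla W\|_{L^2}^2
\]
for some $\delta>0$ depending on $E^c(W)-E^c(u_0)$. It also gives the coercive bound $G[u(t)]\gtrsim_\delta \|\nabla u(t)\|_{L^2}^2$. Local theory in $\dot H^1$ then shows that scattering follows from a finite $L^6_{t,x}$ bound.

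\emph{Step 2: reduction to almost periodic solutions.} Suppose the statement fails. Using the $\dot H^1$ linear profile decomposition and the energy-critical stability theory, I would extract a minimal-energy non-scattering solution. It satisfies the same constraints and is almost periodic modulo symmetries: there are $N(t)$, $x(t)$ (and no Galilean parameter, by energy-criticality) such that
\[
\{N(t)^{-1}u(t,x(t)+x/N(t))\}
\]
is precompact in $\dot H^1$. Following the Killip--Visan reductions, one may further assume $N(t)\le 1$, and then split into two scenarios: $\int_0^\infty N(t)^3\,dt<\infty$ (rapid frequency cascade) or $=\infty$ (quasi-soliton).

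\emph{Step 3: rigidity.} In the rapid cascade case, a long-time Strichartz / additional-regularity argument shows $u\in L^2$ with mass zero, which is a contradiction. In the quasi-soliton case I would use a frequency-localized interaction Morawetz estimate, as in Dodson's work. This controls $\int_I N(t)^3\,dt$ by error terms that are $o(\int_I N(t)^3\,dt)$, using the coercivity $G[u]\gtrsim \|\nabla u\|^2$ from Step 1. Letting $I$ grow gives the contradiction, and this estimate is insensitive to $x(t)$.

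The main obstacle is Step 3 in the non-radial setting. The spatial center $x(t)$ is uncontrolled, so the localized virial of Lemma~\ref{VirialIden} cannot be applied directly. The frequency-localized interaction Morawetz estimate avoids this dependence on $x(t)$, but its error terms require the long-time Strichartz estimates of \cite{D5}. Since the result is stated as imported, in the paper I would simply cite \cite{D5}.
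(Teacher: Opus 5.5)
Citing \cite{D5} is exactly what the paper does; it gives no proof of this statement, so as a proof in this paper your proposal is complete. The roadmap you sketch, however, has a concrete error in Step~2 that would break Step~3 if carried out.

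The normalization $N(t)\le 1$, with the dichotomy $\int_0^\infty N(t)^3\,dt<\infty$ versus $=\infty$, is Dodson's normalization for the \emph{mass}-critical problem. For the $\dot H^1$-critical equation \eqref{ECgNLS} the roles of high and low frequencies are reversed. The Killip--Visan reduction for the energy-critical problem yields $\inf_t N(t)\ge 1$ (soliton-like or low-to-high frequency cascade). Finite-time blowup is a separate scenario, excluded by the Kenig--Merle localized mass argument (compare the proof of Proposition~\ref{P3}); your Step~2 omits this case. The dichotomy is then governed by $\int_0^\infty N(t)^{-1}\,dt$. Indeed, for $u(t)=N f(N\cdot)$ in four dimensions the interaction Morawetz density $\iint |u(x)|^2|u(y)|^2|x-y|^{-3}\,dx\,dy$ scales like $N^{-1}$. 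Under mass-critical scaling $N^{2}f(N\cdot)$ the same density scales like $N^{3}$, which is where your exponent comes from.

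This is not cosmetic. Your rapid-cascade step (show $u\in L^2$, then that the mass vanishes) only works if $N(t)\to\infty$, because the mass of a solution at frequency scale $N(t)$ is of size $N(t)^{-2}$. If $N(t)\to 0$ this quantity blows up instead. There is also no conserved quantity above $\dot H^1$ that could play the role energy plays in the mass-critical cascade argument.

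Likewise, the frequency-localized interaction Morawetz estimate in the energy-critical setting localizes to \emph{high} frequencies so that the truncated solution has finite, controlled mass. That requires the solution to live at frequencies bounded below. With $N(t)\le 1$ and $N(t)\to 0$, a high-frequency truncation would discard the bulk of the solution. With $N(t)\ge 1$, and $\int N(t)^{-1}\,dt$ in place of $\int N(t)^{3}\,dt$, your outline matches Dodson's argument.
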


Building on this result, the following approximation result was established in \cite{Luo2024}.
For $(\xi_0, x_0, \lambda_0) \in \mathbb{R}^4 \times \mathbb{R}^4 \times (0,\infty)$ we define the operator $g_{\xi_0,x_0,\lambda_0}$ by

\[
g_{\xi_0,x_0,\lambda_0}f(x) = \lambda_0^{-2} e^{i\xi_0 \cdot x} f(\lambda_0^{-1}(x - x_0)).
\]
Note that this operator includes the $L^2$-preserving rescaling. 
		
\begin{lemma}[Embedding nonlinear profiles I, {\cite[Lemma 4.10]{Luo2024}}] \label{P:embedding} Fix $\theta \in (0, 1)$. Let $\{t_n\}_{n\in\mathbb{N}}$ satisfy $t_n \equiv 0$ or $t_n \to \pm\infty$, and let $\{\lambda_n\}_{n\in\mathbb{N}} \subset (0,\infty)$ satisfy $\lambda_n \to 0$ as $n \to +\infty$. Suppose $\phi \in \dot{H}^1(\mathbb{R}^4)$ and obeys
\begin{align}
E^{c}(\phi) < E^{c}(W) \quad \text{and} \quad \|\nabla \phi\|_{L^2} < \|\nabla W\|_{L^2}.  \label{cdonR} 
\end{align}
Then for $n$ sufficiently large, there exists a global solution $v_n$ to \eqref{NLS} with
\begin{equation}\label{CC0}
v_n(0) = \varphi_n :=  \lambda_n g_{0, x_n, \lambda_n} e^{i t_n \Delta}P_{>\lambda^{\theta}_{n}} \phi
\end{equation}
that satisfies
\begin{align}
\|\langle \nabla \rangle v_n\|_{S(\mathbb{R}\times\mathbb{R}^4)} & 	\leq C(E^{c}(\phi)), \label{CC12} \\
\| v_n \|_{L_{t,x}^{3}(\mathbb{R}\times\mathbb{R}^4)} &\lesssim \lambda_n^{1-\theta} \label{CC1}
\end{align}
uniformly in $n$.  In addition, for every $\epsilon > 0$ there exists $N_\epsilon \in \mathbb{N}$, ${\phi}_\epsilon \in C_c^\infty(\mathbb{R} \times \mathbb{R}^4)$ and ${\psi}_\epsilon \in C_c^\infty(\mathbb{R} \times \mathbb{R}^4)$ such that for all $n \geq N_\epsilon$,
\begin{align}
\bigl\| v_n - \lambda_n^{-1} {\phi}_\epsilon \bigl( \tfrac{t}{\lambda_n^2} + t_n, \tfrac{x - x_n}{\lambda_n} \bigr) \bigr\|_{ L_{t,x}^{6}(\R\times\R^4)} &\leq \epsilon,  \\
\bigl\| \nabla v_n - \lambda_n^{-2} {\psi}_\epsilon \bigl( \tfrac{t}{\lambda_n^2} + t_n, \tfrac{x - x_n}{\lambda_n} \bigr) \bigr\|_{L_{t,x}^{3}(\R\times\R^4)} &\leq \epsilon.
\end{align}

\end{lemma}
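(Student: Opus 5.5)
The statement to prove is Lemma~\ref{P:embedding}, which is quoted from \cite[Lemma 4.10]{Luo2024}. My plan is to treat \eqref{NLS} on the small scale $\lambda_n$ as a perturbation of the cubic equation \eqref{ECgNLS}, and then close with the stability result Lemma~\ref{stabi}.

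\textbf{Step 1: the cubic profile.} Using \eqref{cdonR} and Theorem~\ref{WGPQN}, I construct a global solution $w$ of \eqref{ECgNLS} with $\|w\|_{L^6_{t,x}}\lesssim 1$:
\begin{itemize}
\item if $t_n\equiv 0$, take $w(0)=\phi$;
\item if $t_n\to\pm\infty$, take $w$ to be the solution that scatters to $e^{it\Delta}\phi$ as $t\to\pm\infty$ (wave operator). This is possible because $E^c(w)=\tfrac12\|\nabla\phi\|_2^2<E^c(W)$ and $\|\nabla w\|_{L^2}<\|\nabla W\|_{L^2}$.
\end{itemize}
Persistence of regularity for \eqref{ECgNLS} then gives $\|\nabla w\|_{S}\le C(E^c(\phi))$.

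Next I handle the missing $L^2$ control. Since $\phi\in\dot H^1$ only, I replace $\phi$ by $P_{>\lambda_n^\theta}\phi$ and solve with this data to get $w_n$. By stability for \eqref{ECgNLS}, $w_n$ is close to $w$ in $\dot H^1$-Strichartz norms. By Bernstein and persistence of regularity (differentiating the equation), $w_n$ also has $L^2$-level bounds
\[
\|w_n\|_{S}\lesssim \lambda_n^{-\theta}, \qquad \||\nabla|^{s}w_n\|_S\lesssim \lambda_n^{-\theta(1-s)}.
\]

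\textbf{Step 2: the approximate solution.} Set
\[
\tilde v_n(t,x)=\lambda_n^{-1}w_n\bigl(\lambda_n^{-2}t+t_n,\ \lambda_n^{-1}(x-x_n)\bigr),
\]
truncated in time to $|\lambda_n^{-2}t+t_n|\le T$. Outside that window I continue $\tilde v_n$ by the free evolution, using scattering of $w$ and the smallness of its $L^6$ norm there. Then $\tilde v_n(0)$ is close to $\varphi_n$ in $H^1$; for $t_n\to\pm\infty$ this follows from the wave-operator construction. The error is
\[
e_n=-|\tilde v_n|\tilde v_n \;(\text{on the window})\;+\;\text{cubic terms on the free pieces}.
\]
Scaling gives $\|\tilde v_n\|_{L^3_{t,x}}=\lambda_n\|w_n\|_{L^3}$. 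By interpolation, $\|w_n\|_{L^3_{t,x}}\lesssim\lambda_n^{-\theta}$, which gives \eqref{CC1} once the estimate transfers to $v_n$.

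\textbf{Step 3: estimating the error.} Here I bound $\|\langle\nabla\rangle(|\tilde v_n|\tilde v_n)\|_{L^{3/2}_{t,x}}$. The gradient part is
\[
\|\nabla\tilde v_n\|_{L^3}\,\|\tilde v_n\|_{L^3}\lesssim \lambda_n\cdot\lambda_n^{-\theta}\,\|\nabla w_n\|_{L^3}.
\]
Since $\|\nabla w_n\|_{L^3_{t,x}}$ is an $\dot H^{1/2}$-level quantity, it is $\lesssim\lambda_n^{-\theta/2}$. The total is therefore $\lambda_n^{1-3\theta/2}$, which is small only if $\theta<2/3$. This is \textbf{the main obstacle}: the exponents must be balanced carefully. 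If needed, I would choose the frequency cutoff exponent smaller, or use a space-time truncation of $w$ to a compact set before rescaling (via density, replacing $w$ by $C_c^\infty$ approximants), so that the $L^2$-level norms are $O_T(1)$ and the error is $O(\lambda_n)$. The truncation-tail errors are controlled by the smallness of $\|w\|_{L^6(|t|>T)}$. Lemma~\ref{stabi}, together with Remark~\ref{R:stabi} for the data mismatch, then yields $v_n$ with \eqref{CC12}; the $H^1$ bound $M$ comes from mass and energy conservation.

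\textbf{Step 4: the approximation statements.} Approximate $w$ in $L^6_{t,x}$, and $\nabla w$ in $L^3_{t,x}$, by $C_c^\infty$ functions $\phi_\epsilon$, $\psi_\epsilon$. Rescale these, and combine with $\|v_n-\tilde v_n\|\to0$ and with the smallness outside the window, to conclude.
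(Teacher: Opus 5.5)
Your plan is the standard argument the paper defers to: a cubic solution $w_n$ with data $P_{>\lambda_n^\theta}\phi$, rescaled to scale $\lambda_n$, with $-|\tilde v_n|\tilde v_n$ treated as the error and the argument closed by Lemma~\ref{stabi}. However, Step~3 contains a scaling error that creates a false obstruction.

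In four dimensions $(3,3)$ is an $L^2$-admissible pair, since $\tfrac23+\tfrac43=2$. So $L^3_{t,x}$ is an $L^2$-level norm. This is consistent with your own scaling $\|\tilde v_n\|_{L^3_{t,x}}=\lambda_n\|w_n\|_{L^3_{t,x}}$, which matches $\|\tilde v_n(t)\|_{L^2}=\lambda_n\|w_n\|_{L^2}$. Hence $\|\nabla w_n\|_{L^3_{t,x}}\le\|\nabla w_n\|_{S}\lesssim 1$ uniformly in $n$. This is the $s=1$ case of your bound $\||\nabla|^s w_n\|_S\lesssim\lambda_n^{-\theta(1-s)}$, not the $s=\tfrac12$ case. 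Consequently
\[
\|\langle\nabla\rangle(|\tilde v_n|\tilde v_n)\|_{L^{3/2}_{t,x}}\lesssim\|\langle\nabla\rangle\tilde v_n\|_{L^3_{t,x}}\|\tilde v_n\|_{L^3_{t,x}}\lesssim\lambda_n^{1-\theta}\to 0
\]
for every $\theta\in(0,1)$. This is exactly the computation the paper carries out in the proof of Proposition~\ref{P4}, with $\beta_n$ in place of $\lambda_n$.

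As written, though, your argument only covers $\theta<\tfrac23$, and neither proposed remedy repairs it:
\begin{itemize}
\item $\theta$ is fixed by the statement; it is the exponent produced by Theorem~\ref{Profi}. Lowering it to some $\theta'<\theta$ would need an extra argument, which you do not give, absorbing the frequency band $\lambda_n^{\theta}<|\xi|\le\lambda_n^{\theta'}$ into the data mismatch.
\item Replacing $w$ by a $C_c^\infty$ approximant $\phi_\epsilon$ does not produce an approximate solution. The quantity $\|\nabla[(i\partial_t+\Delta)\phi_\epsilon+|\phi_\epsilon|^2\phi_\epsilon]\|_{L^{3/2}_{t,x}}$ is invariant under the rescaling, and it is not made small by $L^6_{t,x}$-closeness of $\phi_\epsilon$ to $w$.
\end{itemize}
Once the scaling is corrected, no remedy is needed and the rest of your outline goes through.

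There are also two smaller points.

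First, the rate in \eqref{CC1} does not simply transfer from $\tilde v_n$. Lemma~\ref{stabi} only gives $\epsilon^\kappa$ with $\kappa<1$. It gives no power of $\lambda_n$ at all when the data or tail mismatch is merely $o(1)$, as for $t_n\to\pm\infty$ or with your time truncation. Instead, once $v_n$ has uniform $L^6_{t,x}\cap L^3_{t,x}$ bounds, Lemma~\ref{Rpr} with $s=0$ gives
\[
\|v_n\|_{L^3_{t,x}}\le\|v_n\|_{S(\R)}\lesssim\|\varphi_n\|_{L^2}=\lambda_n\|P_{>\lambda_n^\theta}\phi\|_{L^2}\lesssim\lambda_n^{1-\theta}.
\]

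Second, for $t_n\to\pm\infty$ you use $\tfrac12\|\nabla\phi\|_{L^2}^2<E^c(W)$ for the wave operator, but this does not follow from \eqref{cdonR}; take $\phi=0.9W$. It is, however, what is available where the lemma is applied, since there the energy of the profile $e^{it_n\Delta}\phi$ tends to $\tfrac12\|\nabla\phi\|_{L^2}^2$.
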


The proof follows along the same lines as in \cite[Lemma 2.14]{AJZ} (see also \cite[Proposition 8.3]{KillipOhPoVi2017}). 

\subsection{Approximation by the quadratic NLS}
In the small-scale limit, we instead approximate \eqref{NLS} by the mass-critical NLS. 

We have the following scattering result for the 4$d$ mass-critical NLS appearing in \cite{Dod4}.

\begin{theorem}
Let $u_0 \in L^2(\mathbb{R}^4)$. Then there exists a unique global solution $u \in C(\mathbb{R}, L^2(\mathbb{R}^4))$ of the mass-critical NLS 
\begin{align}\label{MassCritical}
i\partial_t u + \Delta u = |u|u,
\end{align}
with $u|_{t=0} = u_0$, which satisfies
\[
\|u\|_{L^{3}_{t,x}(\mathbb{R} \times \mathbb{R}^4)} \lesssim_{M(u)} 1.
\]
In addition, the solution scatters in both time directions in $L^2(\mathbb{R}^4)$.
\end{theorem}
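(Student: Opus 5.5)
This theorem is imported from \cite{Dod4}, and we would not reprove it here. To indicate how it fits with the tools above, here is the standard route via concentration-compactness and rigidity. Local theory comes first. The $L^2$-critical Strichartz estimates and the admissible pair $(3,3)$ give global existence and the bound $\|u\|_{L^3_{t,x}}\lesssim M(u)^{1/2}$ for small mass. They also give a stability lemma analogous to Lemma~\ref{stabi} at the $L^2$ level. A finite $L^3_{t,x}$ norm implies scattering in $L^2$, exactly as in Corollary~\ref{ScatterCondi}. So the claim reduces to a uniform bound $L(m):=\sup\{\|u\|_{L^3_{t,x}}: M(u)\le m\}<\infty$ for every $m$. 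Suppose this fails. Let $m_0$ be the minimal mass at which $L$ becomes infinite; it is positive by small-data theory.

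The second step is to extract a minimal blowup solution. We would use the $L^2$ profile decomposition of Merle--Vega and B\'egout--Vargas. This is the $\lambda_\infty^j=\infty$ branch of Theorem~\ref{Profi}, and it includes Galilean frequency parameters $\xi_n$. Combining it with stability, we would produce a solution $u$ with $M(u)=m_0$ and infinite $L^3_{t,x}$ norm. Modulo symmetries, this solution is almost periodic: the orbit $\{g_{\xi(t),x(t),N(t)^{-1}}^{-1}u(t)\}$ is precompact in $L^2$. The usual reductions (Killip--Tao--Vi\c{s}an) then allow us to assume $N(t)\le 1$ on $[0,\infty)$, and to split into a rapid frequency-cascade scenario, $\int_0^\infty N(t)^3\,dt<\infty$, and a quasi-soliton scenario, $\int_0^\infty N(t)^3\,dt=\infty$.

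The third step, and the main obstacle, is precluding these scenarios without radial symmetry. Dodson's key tool is a long-time Strichartz estimate. It bounds the high-frequency part $P_{>N}u$ on intervals $J$ with $\int_J N(t)^3\,dt\approx K$, in $U^2_\Delta$-type spaces, by roughly $(K/N)^{1/2}$. With this estimate, the rapid cascade is ruled out by showing additional regularity, $u\in \dot H^s$ for some $s>0$ and in fact $u\in\dot H^1$, which forces $u\equiv0$ by mass and energy conservation. The quasi-soliton is ruled out by a frequency-localized interaction Morawetz estimate. In dimension $4$ this takes the form $\int_J\!\int |\nabla|^{-1/2}\text{-type}\,|P_{\le K}u|^2$, with a lower bound $\gtrsim K$ that contradicts an upper bound $o(K)$; the upper bound is obtained from the long-time Strichartz estimate. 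The hardest part is controlling the error terms in the interaction Morawetz estimate that come from truncating to low frequencies, and this is exactly where the long-time Strichartz bound is indispensable.
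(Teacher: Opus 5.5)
Your proposal matches the paper: the paper also gives no proof of this statement and simply imports it from \cite{Dod4}, and your outline of Dodson's argument is an accurate sketch (minimal-mass almost periodic solution via the $L^2$ profile decomposition, reduction to rapid cascade versus quasi-soliton with $N(t)\le 1$, long-time Strichartz estimate, frequency-localized interaction Morawetz). One small precision: in the rapid-cascade case, what forces $u\equiv 0$ is that the additional-regularity bound makes $\|u(t)\|_{\dot H^1}$, and hence $E(u)$, tend to zero as $t\to\infty$; knowing only that $u\in\dot H^1$ is not enough.
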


Using this result and a perturbative argument, the following small-scale approximation result was established in \cite{Luo2024}.

\begin{lemma}[Embedding nonlinear profiles II, {\cite[Lemma 4.9]{Luo2024}}]\label{nonlinearPII} Fix $\theta\in (0, 1)$. Let $\{t_n\}_{n\in\mathbb{N}}$ satisfy $t_n \equiv 0$ or $t_n \to \pm\infty$, and let $\{\lambda_n\}_{n\in\mathbb{N}} \subset (0,\infty)$ satisfy $\lambda_n \to +\infty$ as $n \to +\infty$. Let $\{\xi_n\}_{n\in\mathbb{N}} \subset \mathbb{R}^4$ be bounded. Suppose $\phi \in L^2(\mathbb{R}^4)$. 
Then for $n$ sufficiently large, there exists a global solution $u_n$ to \eqref{NLS} with
\[
u_n(0, x) = \varphi_n(x) :=  g_{\xi_n,x_n,\lambda_n} e^{i t_n \Delta}P_{\leq \lambda^{\theta}_{n}} \phi
\]
that satisfies
\begin{align}
&\|\langle \nabla\rangle u_n\|_{S(\mathbb{R})}  	\leq C(M(\phi)), \label{boundSC} \\
&\| u_n \|_{L_{t,x}^{6}(\mathbb{R}\times\mathbb{R}^4)} \lesssim \lambda_n^{-(1-\theta)} \label{Liv1}
\end{align}
uniformly in $n$. In addition, for every $\epsilon > 0$ there exists $N_\epsilon \in \mathbb{N}$ and ${\phi}_\epsilon \in C_c^\infty(\mathbb{R} \times \mathbb{R}^4)$ such that for all $n \geq N_\epsilon$,

\begin{align}
\bigl\| u_n - \lambda_n^{-2} e^{-i t |\xi_n|^2} e^{i\xi_n\cdot x}{\phi}_\epsilon \bigl( \tfrac{t}{\lambda_n^2} + t_n, \tfrac{x - x_n - 2t\xi_n}{\lambda_n} \bigr) \bigr\|_{L_{t,x}^{3}(\mathbb{R}\times\mathbb{R}^4)} &\leq \epsilon,  \\
\bigl\| \nabla u_n - i\xi_n \lambda_n^{-2} e^{-i t |\xi_n|^2} e^{i\xi_n\cdot x}{\phi}_\epsilon \bigl( \tfrac{t}{\lambda_n^2} + t_n, \tfrac{x - x_n - 2t\xi_n}{\lambda_n} \bigr) \bigr\|_{L_{t,x}^{3}(\mathbb{R}\times\mathbb{R}^4)} &\leq \epsilon.
\end{align}
\end{lemma}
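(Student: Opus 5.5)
The statement in question is Lemma~\ref{nonlinearPII}, the small-scale approximation by the mass-critical NLS. The plan is to construct an approximate solution to \eqref{NLS} from the mass-critical solution and then invoke the stability result Lemma~\ref{stabi}.

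\textbf{Step 1: the mass-critical profile.} First I would remove the Galilean factor. Since $\xi_n$ is bounded, pass to a subsequence $\xi_n\to\xi_\infty$. Let $w$ be the global solution of \eqref{MassCritical} with either $w(0)=\phi$ (if $t_n\equiv0$) or $w$ scattering to $e^{it\Delta}\phi$ as $t\to\pm\infty$ (if $t_n\to\pm\infty$). By the mass-critical scattering theorem, $\|w\|_{L^3_{t,x}}\lesssim_{M(\phi)}1$. Let $w_n$ denote the analogous solution with data $P_{\le\lambda_n^\theta}\phi$. By $L^2$ stability for \eqref{MassCritical}, $w_n\to w$ in $L^3_{t,x}$. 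Persistence of regularity for \eqref{MassCritical} gives
\[
\||\nabla|^s w_n\|_{S(\R)}\lesssim \lambda_n^{s\theta}, \qquad s\in\{0,1,2\}.
\]
Define
\[
\tilde u_n(t,x)=\lambda_n^{-2}e^{i\xi_n\cdot x-it|\xi_n|^2}\,w_n\bigl(\tfrac{t}{\lambda_n^2}+t_n,\tfrac{x-x_n-2t\xi_n}{\lambda_n}\bigr).
\]
Galilean and scaling invariance of \eqref{MassCritical} imply that $\tilde u_n$ solves the mass-critical equation exactly. Its data agrees with $\varphi_n$ up to an error that is $o(1)$ in $H^1$; when $t_n\to\pm\infty$ the error is small only after applying the linear propagator, which Remark~\ref{R:stabi} permits. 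Scaling gives
\[
\|\nabla \tilde u_n\|\lesssim |\xi_n|+\lambda_n^{\theta-1}
\]
in Strichartz norms, so $\|\langle\nabla\rangle\tilde u_n\|_{S}\le C(M(\phi))$.

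\textbf{Step 2: the error term and the $L^6$ bound.} The error is $e=-|\tilde u_n|^2\tilde u_n$, and I need $\|\langle\nabla\rangle e\|_{L^{3/2}_{t,x}}\to0$. By Hölder,
\[
\|\langle\nabla\rangle e\|_{L^{3/2}}\lesssim \|\tilde u_n\|_{L^6}^2\,\|\langle\nabla\rangle\tilde u_n\|_{L^3}.
\]
The second factor is bounded by Step 1. For the first factor, Sobolev embedding combined with the Strichartz bound $\||\nabla|^{2/3}w_n\|_{L^6_tL^{3}_x}$ type estimates gives, after scaling, $\|\tilde u_n\|_{L^6}\lesssim \lambda_n^{-1}\||\nabla|w_n\|_{S}\lesssim\lambda_n^{\theta-1}$. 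This yields \eqref{Liv1} at the level of $\tilde u_n$, and hence $e\to0$.

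\textbf{Step 3: transfer to $u_n$.} Lemma~\ref{stabi} then produces a global $u_n$ with $\|\langle\nabla\rangle(u_n-\tilde u_n)\|_S\to0$. This gives \eqref{boundSC}, and \eqref{Liv1} follows via Sobolev embedding for the difference. The approximation by $\phi_\epsilon$ comes from density: approximate $w$ in $L^3_{t,x}$ by $C_c^\infty$ functions, and use $w_n\to w$. The gradient statement uses
\[
\nabla\tilde u_n = i\xi_n\tilde u_n+O(\lambda_n^{-1}\nabla w_n),
\]
whose second term is $O(\lambda_n^{\theta-1})$ in $L^3$.

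\textbf{Main obstacle.} I expect the main difficulty to be the Galilean factor combined with the $t_n\to\pm\infty$ case. Here one must verify the hypothesis of Remark~\ref{R:stabi}, namely that the linear evolution of the data mismatch is small in $\langle\nabla\rangle$-weighted $L^3$. The $\xi_n$-derivative contributes an $O(1)$ amount rather than a vanishing one, so boundedness of $\xi_n$ is essential and must be tracked carefully.
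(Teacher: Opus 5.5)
Your construction follows the standard perturbative argument the paper has in mind; the paper itself gives no proof and cites \cite{Luo2024}. You use the Galilean-boosted, $L^2$-rescaled solution of \eqref{MassCritical} as the approximate solution. You treat the cubic term as the error, controlled by $\|\tilde u_n\|_{L^6_{t,x}}=\lambda_n^{-1}\|w_n\|_{L^6_{t,x}}\lesssim\lambda_n^{-1}\|\nabla w_n\|_{L^6_tL^{12/5}_x}\lesssim\lambda_n^{\theta-1}$, and then apply Lemma~\ref{stabi}. This correctly gives global existence, \eqref{boundSC}, $\|u_n\|_{L^6_{t,x}}\to 0$, and both $\phi_\epsilon$-approximations. (Small slip: $(6,3)$ is not admissible in four dimensions. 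The pair you need is $(6,\tfrac{12}{5})$, which is what your final bound actually uses.)

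The step that fails as written is the explicit rate in \eqref{Liv1} for $u_n$ itself. Lemma~\ref{stabi} only gives $\|\langle\nabla\rangle(u_n-\tilde u_n)\|_{S(\R)}\lesssim\epsilon_n^{\kappa}$ for some unspecified $\kappa\in(0,1)$. Here $\epsilon_n$ must dominate both the data mismatch and $\|\langle\nabla\rangle e\|_{L^{3/2}_{t,x}}\sim\lambda_n^{-2(1-\theta)}$. So Sobolev embedding for the difference gives only $\|u_n\|_{L^6_{t,x}}\lesssim\lambda_n^{-(1-\theta)}+\epsilon_n^{\kappa}$.

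Even with exact data matching, $\epsilon_n^{\kappa}\lesssim\lambda_n^{-(1-\theta)}$ would require $\kappa\geq\tfrac12$. In the case $t_n\to\pm\infty$ with $w_n$ defined through its scattering state, your mismatch is merely $o(1)$, so you get no rate at all. What you actually prove is $\|u_n\|_{L^6_{t,x}}\to 0$.

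The missing observation is that \eqref{NLS} is itself Galilean invariant. Once stability has given $\sup_n\|u_n\|_{L^3_{t,x}\cap L^6_{t,x}}<\infty$, define $U_n(t,x):=e^{-i\xi_n\cdot x-it|\xi_n|^2}u_n(t,x+2t\xi_n)$. It solves \eqref{NLS} with the same mass and the same $L^p_{t,x}$ norms, and $\|\nabla U_n(0)\|_{L^2}=\lambda_n^{-1}\|\nabla P_{\leq\lambda_n^{\theta}}\phi\|_{L^2}\lesssim\lambda_n^{\theta-1}\|\phi\|_{L^2}$. Lemma~\ref{Rpr} with $s=1$ and Sobolev embedding then give $\|u_n\|_{L^6_{t,x}}=\|U_n\|_{L^6_{t,x}}\lesssim\|\nabla U_n\|_{S(\R)}\lesssim\lambda_n^{-(1-\theta)}$, independently of $\kappa$.

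The same observation removes what you call the main obstacle: you may take $\xi_n\equiv 0$ from the start and boost at the end. Also, if you prescribe $w_n(t_n)=e^{it_n\Delta}P_{\leq\lambda_n^{\theta}}\phi$ instead of a scattering state, then $\tilde u_n(0)=\varphi_n$ exactly. The closeness $w_n\to w$ in $L^3_{t,x}$ still follows from mass-critical stability, and Remark~\ref{R:stabi} is never needed.
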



\section{Blowup solutions are compact}\label{S:Compactness}

In this section we prove Proposition~\ref{P2}. 

\begin{proof}[Proof of Proposition~\ref{P2}] Suppose $u\in\mathcal{B}$, so that $u$ satisfies \eqref{Thres} and satisfies
\[
\|u\|_{L_{t,x}^6\cap L_{t,x}^3([0,\sup I_{\max})\times\R^4)} = \infty. 
\]
By Lemma~\ref{GlobalS}, we have that
\begin{align}\label{boundGC}
 &\|\nabla u(t)\|_{L^{2}}<\|\nabla W\|_{L^{2}}\quad \text{for all } t\in [0, \sup I_{\max}).
\end{align}
Using the standard arguments, we see that to prove Proposition~\ref{P2} it suffices to show that for any sequence $\left\{\tau_{n}\right\}_{n\in \mathbb{N}}\subset [0, \sup I_{\max} )$, there exist $\tilde{\lambda}_{n}\geq 1$ and $\tilde{x}_{n}\in\R^4$ such that
\begin{align}\label{LamC}
\tilde{\lambda}_{n}^{-1}u\bigl(\tau_{n}, \tfrac{x-\tilde{x}_{n}}{\tilde{\lambda}_{n}}\bigr)\qtq{converges strongly in}\dot H^1 \qtq{up to a subsequence.}
\end{align}
By $\dot H^1$ continuity of $u$, it suffices to consider $\tau_{n}\to \sup I_{\max}$. 

In light of \eqref{boundGC}, we can apply the profile decomposition (Theorem~\ref{Profi}) to the sequence $\left\{u(\tau_{n})\right\}_{n\in \mathbb{N}}$ and write
\begin{equation}\label{Dpe}
u_{n}:=u(\tau_{n})=\sum^{J}_{j=1}\phi^{j}_{n}+R_n^J\quad \text{for } J\leq J^{\ast},
\end{equation}
with all the properties stated in Theorem~\ref{Profi}.

We will show that $J^{\ast}=1$ by showing that all other possibilities lead to a contradiction. We follow closely the argument developed in \cite[Lemma 4.11]{Luo2024}.

\bigskip

\noindent\textbf{Step 1.} First, we show that $J^{\ast}\geq 1$. Indeed, suppose that $J^{\ast}=0$. Then by \eqref{Sr}, we have
\[
\| e^{it\Delta}u(\tau_{n})  \|_{L^{6}_{t,x}(\left\{t>0\right\}\times \R^{4})\cap L^{3}_{t,x}(\left\{t>0\right\}\times \R^{4})}
\to 0 \quad \text{as } n\to \infty.
\]
Using the stability result Lemma~\ref{stabi} (with $\widetilde{u}=e^{it\Delta}u(\tau_{n}) $), we obtain that for $n$ large,
\[
\|u(t+\tau_{n})\|_{L^{6}_{t,x}(\left\{t>0\right\}\times \R^{4})\cap L^{3}_{t,x}(\left\{t>0\right\}\times \R^{4})}
=\|u  \|_{L^{6}_{t,x}(\left\{t>\tau_{n}\right\}\times \R^{4})\cap L^{3}_{t,x}(\left\{t>\tau_{n}\right\}\times \R^{4})}\lesssim 1,
\]
contradicting that $u\in\mathcal{B}$.

\bigskip

\noindent\textbf{Step 2.} Suppose that $J^{\ast}\geq 2$. By the Linear Profile Decomposition Theorem~\ref{Profi} and \eqref{boundGC}, we have
\begin{align}\label{MassC}
&\lim_{n \to \infty} \biggl[\sum_{j=1}^{J} M(\phi_n^j) + M(R_n^J)\biggr] = \lim_{n \to \infty} M(u_{n}) =M(u_0),\\
\label{DECE}
&\lim_{n \to \infty} \biggl[\sum_{j=1}^{J} E(\phi_n^j) + E(R_n^J)\biggr] = \lim_{n \to \infty} E(u_{n})=E^{c}(W),\\
\label{DEG}
&\limsup_{n \to \infty} \biggl[\sum_{j=1}^{J} \| \phi_n^j\|_{\dot{H}^{1}}^2 + \| R_n^J\|_{\dot{H}^{1}}^2\biggr]= \limsup_{n \to \infty} \|u(\tau_n)\|_{\dot{H}^{1}}^2 \leq \|W\|_{\dot{H}^{1}}^2
\end{align}
for any $0\leq J \leq J^{\ast}$. By \eqref{IneW} and \eqref{DEG}, we deduce that for $n$ large,
\begin{align}\label{BounWd}
 \| \phi_n^j\|_{\dot{H}^{1}}^2\leq \frac{\|W\|_{\dot{H}^{1}}^2}{E^{c}(W)} E(\phi_n^j)
\quad \text{and} \quad
 \| R_n^J\|_{\dot{H}^{1}}^2\leq \frac{\|W\|_{\dot{H}^{1}}^2}{E^{c}(W)} E(R_n^J).
\end{align}
In particular, $\liminf\limits_{n\to \infty}E(\phi_n^j)>0$. Therefore, there exists $\epsilon^*>0$ such that
\begin{align}\label{MEC}
	E(\phi_n^j)&\leq E^{c}(W)-\epsilon^*,
\end{align}
for sufficiently large $n$ and $1\leq j\leq J$, so that each $\phi_n^j$ satisfies condition \eqref{Thres00} in Theorem~\ref{Th1}.

We use the $\phi_n^j$ to build approximate solutions to \eqref{NLS} in the following four cases:
\begin{itemize}
\item $\lambda^{j}_{n}\equiv 1$ and $t^{j}_{n}\equiv 0$;
\item $\lambda^{j}_{n}\equiv 1$ and $t^{j}_{n}\to \pm \infty$;
\item $\lambda^{j}_{n}\to \infty$;
\item and $\lambda^{j}_{n}\to 0$.
\end{itemize}

In the case where $j$ is such that $\lambda^{j}_{n}\equiv 1$ and $t^{j}_{n}\equiv 0$, \eqref{MEC} implies that we may apply Theorem~\ref{Th1}(i) to construct a global solution $v^{j}$ with initial data $v^{j}(0)=\phi^{j}$ obeying global spacetime bounds. If instead $j$ is such that $\lambda^{j}_{n}\equiv 1$ and $t^{j}_{n}\to \pm \infty$, we take the solution $v^{j}$ to \eqref{NLS} which scatters to $e^{it\Delta}\phi^{j}$ in $H^{1}$ as $t\to \pm \infty$. By \eqref{MEC} and Theorem~\ref{Th1}(i), the solution $v^{j}$ is global and satisfies uniform spacetime bounds. In either case, we define
\[
v^{j}_{n}(t,x)=v^{j}(t+t^{j}_{n}, x),\qtq{which satisfy} \| v^{j}_{n}  \|_{L^{6}_{t,x}(\R\times \R^{4})\cap L^{3}_{t,x}(\R\times \R^{4})}\lesssim_{\epsilon^*}1.
\]

Note that \eqref{BounWd} and persistence of regularity (Lemma~\ref{Rpr}) imply that
\begin{equation}\label{Estrii}
\| v^{j}_{n}  \|_{L^{6}_{t,x}(\R\times \R^{4})\cap L^{3}_{t,x}(\R\times \R^{4})}+\|\nabla  v^{j}_{n}\|_{S(\R)}\lesssim_{\epsilon^*} E(v^{j}_{n})^{\frac{1}{2}}
\end{equation}
and
\begin{equation}\label{masses}
\|  v^{j}_{n}\|_{S(\R)}\lesssim_{\epsilon^*} M(v^{j}_{n})^{\frac{1}{2}}.
\end{equation}

For the case $\lambda^{j}_{n}\rightarrow \infty$ as $n\rightarrow\infty$, we define $v_{n}^{j}$ to be the global solution to equation \eqref{NLS} with data $v_{n}^{j}(0)=\phi_{n}^{j}$ given by Lemma~\ref{nonlinearPII}, which satisfies \eqref{Liv1} and
\begin{align}\label{Lamdace}
\|  v^{j}_{n}\|_{L^{6}_{t,x}(\R\times \R^{4})\cap L^{3}_{t,x}(\R\times \R^{4})}+	\|\left\langle \nabla\right\rangle  v^{j}_{n}\|_{S(\R)}
\lesssim_{\epsilon^*} M(v^{j}_{n}).
\end{align}

Finally, for the case $\lambda^{j}_{n}\rightarrow 0$ as $n\rightarrow\infty$, we define $v_{n}^{j}$ to be the global solution to \eqref{NLS} with data $v_{n}^{j}(0)=\phi_{n}^{j}$ given by Lemma~\ref{P:embedding}. This solution satisfies \eqref{CC1} and
\begin{align}\label{Lamdace111}
\|  v^{j}_{n}\|_{L^{6}_{t,x}(\R\times \R^{4})\cap L^{3}_{t,x}(\R\times \R^{4})}+	\|\left\langle \nabla\right\rangle  v^{j}_{n}\|_{S(\R)}
\lesssim_{\epsilon^*}E^{c}(v^{j}_{n})^{\frac{1}{2}}.
\end{align}

In any case, we have
\begin{equation}\label{Apro11}
\|v_{n}^{j}(0)- \phi_{n}^{j}\|_{H^{1}_{x}}\rightarrow0 \quad \text{as } n\rightarrow\infty.
\end{equation}

We now define approximate solutions to \eqref{NLS} by
\[
u^{J}_{n}(t,x)=\sum^{J}_{j=1}v^{j}_{n}(t,x)+e^{it\Delta}R_n^J.
\]
We will now use stability (Lemma~\ref{stabi}) to contradict the fact that $u$ blows up. 

We first notice that by \eqref{Dpe} and \eqref{Apro11},
\begin{equation}\label{Aps}
\|u^{J}_{n}(0)-u_{n}\|_{H^{1}_{x}}\rightarrow0,\quad \text{as } n\rightarrow\infty
\end{equation}
for any finite $J\leq J^*$.

Moreover, using \eqref{BounWd}, \eqref{Estrii}, \eqref{masses}, \eqref{Lamdace}, and  \eqref{Lamdace111}, and arguing as in Step 1 and Step 2 of the proof of \cite[Lemma 4.11]{Luo2024}, we find:

\begin{enumerate}[label=\rm{(\roman*)}]
\item We have the following global spacetime bound:
\begin{equation}\label{Gstb}
\sup_{J}\limsup_{n\rightarrow\infty}\| {u}^{J}_{n}  \|_{L^{6}_{t,x}(\R\times \R^{4})\cap L^{3}_{t,x}(\R\times \R^{4})} 
\lesssim 1.
\end{equation}

\item For any $\eps>0$, the following holds for $J$ sufficiently large: 
\begin{equation}\label{Eimpo}
\limsup_{n\rightarrow\infty}\|\langle \nabla \rangle e^{J}_{n}  \|_{L^{\frac{3}{2}}_{x}(\R\times\R^4)}\leq\epsilon,
\end{equation}
where
\[
e^{J}_{n}:=(i\partial_{t}+\Delta) {u}^{J}_{n}+|{u}^{J}_{n}|^{2}{u}^{J}_{n}-|{u}^{J}_{n}|{u}^{J}_{n}.
\]
\end{enumerate}

Combining \eqref{Aps}, \eqref{Gstb}, and \eqref{Eimpo}, Lemma~\ref{stabi} now implies that $u\in L^{6}_{t,x}\cap L^{3}_{t,x}$, contradicting that $u$  blows up. 

\bigskip

\noindent\textbf{Step 3.} From Steps 1 and 2, we have $J^*=1$. Thus, \eqref{Dpe} simplifies to
\begin{align}\label{1Dec}
	u_{n}=u(\tau_{n})=\phi_{n}+R^{1}_{n}.
\end{align}
Now observe that
\begin{align}\label{ZerI}
\|\nabla R^{1}_{n}\|^{2}_{L^{2}}\to 0 \quad \text{as } n\to \infty.
\end{align}
Indeed, otherwise \eqref{BounWd} implies that $E( R^{1}_{n})\geq c>0$, so that $\phi_{n}$ obeys the sub-threshold condition \eqref{MEC}. The stability result Lemma~\ref{stabi} then yields global spacetime bounds for $u$, a contradiction.

We next preclude the possibility that $\lambda_{n}\to \infty$. We instead suppose $\lambda_n\to\infty$ and let $v_n$ be the solution to \eqref{NLS} given in Lemma~\ref{nonlinearPII} with initial data $\phi_{n}$.  This solution satisfies \eqref{Liv1} and
\begin{align*}
\|  v_{n}\|_{L^{6}_{t,x}(\R\times \R^{4})\cap L^{3}_{t,x}(\R\times \R^{4})}+	\|\langle \nabla\rangle  v_{n}\|_{S(\R)}
\lesssim M(v_{n}).
\end{align*}
We then consider the approximate solution of \eqref{NLS} given by
\[
u^{1}_{n}(t,x)=v_{n}(t,x)+e^{it\Delta}R_n^1.
\]
Using the argument from Step 2 and the stability result Lemma~\ref{stabi}, we derive global spacetime bounds for $u$, a contradiction.

Finally we preclude the possibility that $t_{n}\to \pm\infty$ as $n\to\infty$. Without loss of generality, suppose $t_{n}\rightarrow \infty$. We then consider the approximate solution of \eqref{NLS} given by
\[
u^{1}_{n}(t,x)=e^{it\Delta}\phi_{n}+e^{it\Delta}R_n^1.
\]
Note that $u^{1}_{n}(0)=u_{n}$. If $\lambda_{n}\equiv1$, then monotone convergence implies
\[
\| e^{it\Delta}\phi_{n}  \|_{L^{6}_{t,x}([0, \infty)\times \mathbb{R}^{4})\cap L^{3}_{t,x}([0, \infty)\times \mathbb{R}^{4})}\rightarrow 0
\]
as $n\rightarrow\infty$. On the other hand, if $\lambda_{n}\to 0$, a change of variables and monotone convergence yield
\begin{equation*}
\begin{split}
&\| e^{it\Delta}\phi_{n} \|_{L^{6}_{t,x}([0, \infty)\times \mathbb{R}^{4})\cap L^{3}_{t,x}([0, \infty)\times \mathbb{R}^{4})} \\
&\quad = \|e^{it\Delta} P_{\geq (\lambda_{n})^{\theta}}\phi\|_{L^{6}_{t,x}([t_{n},\infty)\times \mathbb{R}^{4})} \\
&\quad \quad + \lambda_{n}\|e^{it\Delta} P_{\geq (\lambda_{n})^{\theta}}\phi\|_{L^{3}_{t,x}([t_{n},\infty)\times \mathbb{R}^{4})} \rightarrow 0,
\end{split}
\end{equation*}
as $n\rightarrow\infty$. In either case, from \eqref{Sr} we conclude that
\begin{align*}
\limsup_{n\rightarrow\infty}\| {u}^{1}_{n}  \|_{L^{6}_{t,x}([0, \infty)\times \mathbb{R}^{4})\cap L^{3}_{t,x}([0, \infty)\times \mathbb{R}^{4})} =0.
\end{align*}
Thus, the argument from Step 2 applies once again and yields global spacetime bounds for $u$, a contradiction.  Hence $t_n\equiv 0$.

Setting $\tilde{\lambda}_n=\frac{1}{\lambda_n}$ and $\tilde{x}_n=x_n$, we obtain \eqref{LamC} and conclude the proof. \end{proof}


\section{Global existence}\label{no ftb}

In this section we prove Proposition~\ref{P3}.

\begin{proof} Suppose $u\in\mathcal{B}$ but $T_{\max}=\sup I_{\max}<\infty$. (The case of negative finite time blowup is similar).

We consider a sequence $t_n \to T_{\text{max}}$. By the finite blow-up criterion (cf. Proposition~\ref{LWP}~(iii)), we must have $\|u\|_{S([0,T_{\text{max}}))} = \infty.$
From Proposition~\ref{P2}, there exists $\lambda(t) \geq 1$ and $x(t)\in\R^4$ such that the set defined by \eqref{CompactX} is precompact in $\dot H^{1}(\mathbb{R}^4)$. 

We now claim that
\begin{equation}\label{Claii}
\lim_{t \to T_{\text{max}}} \lambda(t) = \infty.
\end{equation}

Suppose instead that \eqref{Claii} were false. Then there exists a sequence $t_n \to T_{\max}$ such that $\lambda_n:=\lambda(t_n) \to \lambda_0 \in [1, \infty)$.  

By $\dot H^1$-compactness, we can write
\[
u(t_n,x) = \lambda_n \Phi(\lambda_n(x-x_n)) + e_n
\]
with $\Phi\in\dot H^1\backslash\{0\}$ and $e_n\to 0$ strongly in $\dot H^1$.  On the other hand, by the profile decomposition argument used to prove Proposition~\ref{P2}, we may also write
\begin{equation}\label{alt1}
u(t_n) = \phi(\cdot-y_n) + R_n
\end{equation}
where $\phi\in H^1\backslash\{0\}$ or
\begin{equation}\label{alt2}
u(t_n,x) = \beta_n^{-1}[P_{>\beta_n^\theta}\phi](\tfrac{x-y_n}{\beta_n}) +R_n
\end{equation}
where $\phi\in\dot H^1\backslash\{0\}$, $\beta_n\to 0$, and $\theta\in(0,1)$.  In either case $R_n\to 0$ strongly in $\dot H^1$ (but not necessarily in $L^2$) and $\|e^{it\Delta}R_n\|_{L_{t,x}^3 \cap L_{t,x}^6}\to 0$. 

We note, however, that \eqref{alt2} cannot occur.  Indeed, since $\lambda_n\to \lambda_0$, equating the two representations of $u(t_n)$ and changing variables would show that $\Phi$ is equal to a sequence converging weakly to zero in $\dot H^1$, contradicting that $\Phi\neq 0$. 

Thus (redefining $\phi$ to absorb the limiting scale $\lambda_0$), we may write 
\begin{align}\label{1Dec0}
u(t_n) = \phi_n + R_n\qtq{with} \|R_n\|_{\dot{H}^1}\to 0,
\end{align}
with $\phi_n=\phi(\cdot-y_n)\in H^1$ and $R_n\in H^1$ satisfying the properties detailed above. 

We now consider the approximate solution
\[
U_n(t,x) = e^{it\Delta}(\phi_n + R_n),
\]
which satisfies $U_n(0) = u(t_n)$ and obeys global spacetime estimates. We now claim that
\[
e_n:=(i\partial_t+\Delta) U_n +|U_n|^2 U_n - |U_n| U_n = |U_n|^2 U_n - |U_n| U_n
\]
satisfies
\[
\|\langle\nabla\rangle e_n \|_{L_{t,x}^{\frac32}([0,T_{\max}-t_n)\times\R^4)} \to 0 \qtq{as}n\to\infty. 
\]
Indeed, this follows from the vanishing property of $R_n$ under the free evolution and the fact that
\[
\| e^{it\Delta} \phi_n\|_{L_{t,x}^3\cap L_{t,x}^6([0,T_{\max}-t_n)\times\R^4)} \to 0,
\]
which is a consequence of Strichartz estimates and Sobolev embedding, the facts that $\phi\in H^1$ and $t_n\to T_{\max}$, and the Dominated Convergence Theorem. 

An application of the stability theory (Lemma~\ref{stabi}) therefore yields uniform spacetime bounds for $u(t_n+t)$ on the interval $[0,T_{\max}-t_n)$, or equivalently for $u$ on the interval $[0,T_{\max})$. However, this contradicts the fact that $u$ blows up at $T_{\max}$. Thus we conclude that \eqref{Claii} holds. 

We now let $R > 0$ and define the localized mass
\[
M_R(t) = \int_{\mathbb{R}^4} |u(t,x)|^2 \, \xi\bigl(\tfrac{x}{R}\bigr) \, dx, \qtq{for} t \in [0, T_{\text{max}}),
\]
where $\xi$ is a smooth cutoff function with $\xi(x) = 1$ for $|x| \leq 1$ and $\xi(x) = 0$ for $|x| \geq 2$. We note that
\[
M_R'(t) = \tfrac{2}{R} \Im \int_{\mathbb{R}^4} \bar{u} \, \nabla u \cdot (\nabla\xi)\bigl(\tfrac{x}{R}\bigr) \, dx.
\]
Using uniform $H^1$ bounds, it follows that $|M_R'(t)| \lesssim_W 1$. Consequently, for $t_1, t_2 \in [0, T_{\text{max}})$,
\begin{align}\label{MassBound}
	M_R(t_1) = M_R(t_2) + \int_{t_1}^{t_2} M_R'(s) \, ds \lesssim_W M_R(t_2) + |t_1 - t_2|.
\end{align}

Furthermore, as was shown in \cite[Case 1 of Proposition~5.3]{KM1}, the compactness of the solution and the fact that $\lambda(t) \to \infty$ imply that for any $R > 0$,
\[
\limsup_{t \to T_{\text{max}}} \int_{|x|\leq R} |u(t,x)|^2 \, dx = 0.
\]
Thus, $M_R(t_2) \to 0$ as $t_2 \to T_{\text{max}}$. Applying this to \eqref{MassBound} yields
\[
M_R(t_1) \lesssim_W |T_{\text{max}} - t_1|.
\]
Letting $R \to \infty$ and using the monotone convergence theorem, we obtain
\[
M(u_0) = \|u(t_1)\|_{L^2}^2 \lesssim_W |T_{\text{max}} - t_1|.
\]
Finally, letting $t_1 \to T_{\text{max}}$ implies $M(u_0) = 0$, yielding the desired contradiction. \end{proof}


\section{Scale vs. modulation}\label{PreClu}

In this section we prove Proposition~\ref{P4}.

\begin{proof}[Proof of Proposition~\ref{P4}] Suppose $u\in\mathcal{B}$.  We suppose towards a contradiction that $t_n\to\infty$ is such that $\lambda(t_n)\to\infty$ but
\begin{equation}\label{deltabound11}
	\delta(u(t_n)) \geq c > 0
\end{equation}
along some subsequence. To simplify notation, let us write $\lambda_n=\lambda(t_n)$.  Furthermore, as the translation parameters play no role in the argument, let us assume without loss of generality that $x(t_n)\equiv 0$ (and similarly we will ignore the translation parameters when we use the linear profile decomposition).

By $\dot H^1$-compactness, we can write
\[
u(t_n,x) = \lambda_n \Phi(\lambda_n x) + e_n,
\]
with $\Phi\in\dot H^1\backslash\{0\}$ and $e_n\to 0$ strongly in $\dot H^1$.

By the profile decomposition argument used to prove Proposition~\ref{P2}, we also have that either
\begin{equation}\label{vss1}
u(t_n) = \phi + R_n
\end{equation}
where $\phi\in H^1\backslash\{0\}$, or
\begin{equation}\label{vss2}
u(t_n,x) = \beta_n^{-1}[P_{>\beta_n^\theta}\phi](\tfrac{x}{\beta_n}) + R_n,
\end{equation}
where $\phi\in\dot H^1\backslash\{0\}$, $\beta_n\to 0$, and $\theta\in(0,1)$.  In either case we have $R_n\to 0$ strongly in $\dot H^1$ and $\|e^{it\Delta}R_n\|_{L_{t,x}^3 \cap L_{t,x}^6} \to 0$.  

We can rule out the scenario \eqref{vss1}, as in this case we would have
\[
\phi = \lambda_n\Phi(\lambda_n x) + e_n - R_n \rightharpoonup 0 \qtq{weakly in}\dot H^1,    
\]
contradicting that $\phi\neq 0$. Thus scenario \eqref{vss2} holds, and in fact we obtain
\[
\lambda_n^{-1}\beta_n^{-1}[P_{>\beta_n^\theta}\phi](\tfrac{x}{\lambda_n\beta_n}) \to \Phi(x) \qtq{strongly in}\dot H^1. 
\]
From this we can see that without loss of generality, we may take $\beta_n=\lambda_n^{-1}$. 

We now observe that $E^c(\phi) = E^c(W)$, and in light of \eqref{deltabound11} we have 
\[
\sup_n \|u(t_n)\|_{\dot H^1}^2 \leq \|W\|_{\dot H^1}^2 - c,\qtq{so that} \|\phi\|_{\dot H^1} < \|W\|_{\dot H^1}.
\]

We let $w$ be the solution of the energy-critical NLS \eqref{ECgNLS} with $w|_{t=0} = \phi$. By the classification of threshold solutions for \eqref{ECgNLS} (see \cite[Theorem~1.3]{MaMiaoMurphyZheng2025}), we have that either $w$ scatters in both time directions or $w=W^-$ modulo symmetries, where $W^-$ is the heteroclinic orbit that scatters as $t\to-\infty$ and converges exponentially to $W$ as $t\to\infty$. Thus $w$ is global and satisfies
\[
\|w\|_{L^{6}_{t,x}([0,\infty)\times\mathbb{R}^4)} < \infty \quad \text{or} \quad \|w\|_{L^{6}_{t,x}((-\infty,0]\times\mathbb{R}^4)} < \infty
\]
(or both). Without loss of generality, we assume the forward-in-time bound holds.

We next let $w_n$ be the solution to \eqref{ECgNLS} with initial data 
\[
w_n(0) = P_{>\beta_n^\theta}\phi
\]
for some $\theta\in(0,1)$. As $\beta_n\to0$, we have that $w_n(0)\to \phi$ strongly in $\dot H^1$.  Thus by the stability theory for \eqref{ECgNLS} (cf. \cite[Theorem 2.14]{KM1}) we can derive that that
\[
\|\nabla w_n\|_{S([0,\infty))} \lesssim 1 \qtq{for all large}n.
\]
Furthermore, since
\[
\|P_{>(\beta_n)^\theta}\phi\|_{L^2} \lesssim \beta_n^{-\theta} \|\nabla \phi\|_{L^2},
\]
persistence of regularity for \eqref{ECgNLS} implies
\begin{align}\label{Prg1}
\|w_n\|_{S([0,\infty))} \leq C(\|\phi\|_{\dot{H}^1_x}) \beta_n^{-\theta}.
\end{align}

Now consider 
\[
\tilde{u}_n(t, x) = \beta_n^{-1} w_n(\beta_n^{-2} t,\beta_n^{-1} x).
\]
Then $\tilde{u}_n$ is the solution to \eqref{ECgNLS} with initial data 
\[
\tilde{u}_n(0,x) = \beta_n^{-1} [P_{>\beta_n^\theta}\phi](\beta_n^{-1} x).
\]

We now observe that 
\[
\|\nabla\tilde{u}_n\|_{S([0,\infty))} = \|\nabla w_n\|_{S([0,\infty))} \lesssim 1
\]
and (from \eqref{Prg1}) 
\[
\|\tilde{u}_n\|_{S([0,\infty))} = \beta_n\|w_n\|_{S([0,\infty))} \lesssim \beta_n^{1-\theta}\to 0 \qtq{as}n\to\infty.
\]
Finally, as 
\[
\|\langle \nabla\rangle |\tilde u_n|\tilde u_n \|_{L^{\frac{3}{2}}_{t,x}([0,\infty)\times\mathbb{R}^4)}  \lesssim \|\langle \nabla\rangle\tilde{u}_n\|_{L^{3}_{t,x}}\|\tilde{u}_n\|_{L^{3}_{t,x}} \lesssim \beta_n^{1-\theta}\to 0
\]
as $n\to\infty$, we may apply stability (Lemma~\ref{stabi}) to deduce the existence of the solutions $v_n$ to \eqref{NLS} on $[0,\infty)$ with 
\[
v_n(0)=\beta_n^{-1} [P_{>\beta_n^\theta}\phi](\beta_n^{-1} x)
\]
such that for large $n$, 
\[
\|\langle \nabla \rangle v_n\|_{S([0,\infty))} \lesssim 1 	\qtq{and} \lim_{n \to \infty} \| v_n \|_{L_{t,x}^{3}([0,\infty)\times\mathbb{R}^4)} = 0.
\]

Now let
\[
U_n(t,x) = v_n(t,x) + e^{it\Delta} R_n.
\]
These functions obey global spacetime bounds uniformly in $n$. Moreover, using the scattering of the solutions $v_n$ and the vanishing property of $R_n$, we can prove that the $U_n$ asymptotically solve \eqref{NLS} as $n\to\infty$ (as in Step 2 of the proof of Proposition~\ref{P2}). Finally, by Strichartz estimates and the vanishing property of the $R_n$, 
\begin{align*}
\|\langle\nabla\rangle e^{it\Delta}[U_n(0)-u(t_n)]\|_{L_{t,x}^3} & \lesssim \| e^{it\Delta} R_n\|_{L_{t,x}^3} + \|R_n\|_{\dot H^1}\to 0
\end{align*}
as $n\to\infty$.  Thus, by stability (see Lemma~\ref{stabi} and Remark~\ref{R:stabi}), we conclude that 
\[
\|u\|_{L_{t,x}^3\cap L_{t,x}^6([t_n,\infty)\times\R^4)} \lesssim 1
\]
uniformly for large $n$, contradicting the fact that $u$ blows up forward in time. \end{proof}


\section{Small modulation along a sequence}

In this section we prove Proposition~\ref{P5}.

\begin{proof}[Proof of Proposition~\ref{P5}] Suppose $u\in\mathcal{B}$, and suppose towards a contradiction that 
\[
\inf_{t\in[0,\infty)}\delta(u(t))\geq c_0>0.
\]
By Proposition~\ref{P4}, this implies that 
\[
\sup_{t\in[0,\infty)}\lambda(t) \lesssim 1,
\]
and hence we may assume that $\lambda(t)\equiv 1$.

We now establish an interaction Morawetz estimate for $u(t)$ that will force $u\equiv 0$ (a contradiction).

Let $\psi \in C_0^\infty(\mathbb{R})$ be a radial function such that $\psi(x) = 1$ when $|x| \leq 1$ and $\psi(x) = 0$ when $|x| > 2$. Define
\[
\phi(x - y) = \int \psi^2(x - s)\psi^2(y - s)  ds.
\]
Then $\phi(x)$ is supported on $|x| \leq 4$. For $1 \leq R \leq R_0$, define the interaction Morawetz potential:
\[
M_R(t) = \int |u(t,y)|^2 \phi\bigl( \tfrac{|x-y|}{R} \bigr)(x-y)_j \Im[\overline{u}\partial_j u](t,x)  \, dx  dy
\]
and let
\[
M(t) = \int_{1 \leq R \leq R_0} \tfrac{1}{R} M_R(t) \, dR.
\]

By H\"older's inequality and Sobolev embedding, we have
\[
\sup_{t \in \mathbb{R}} |M_R(t)| \lesssim R^4.
\]

A direct calculation yields:
\begin{align}
\tfrac{d}{dt}& M_R(t)\nonumber \\
& =  2 \int |u|^2(t,y) \phi\bigl( \tfrac{|x-y|}{R} \bigr) \left[ |\nabla u|^2 - |u|^4 + |u|^3 \right](t,x)  \, dx  dy \label{estq1} \\
& - 2 \int \Im[\overline{u}\partial_j u](t,y) \phi\bigl( \tfrac{|x-y|}{R} \bigr) \Im[\overline{u}\partial_j u](t,x)  \, dx  dy \label{estq2} \\
& + 2 \int |u|^2(t,y) \phi'\bigl( \tfrac{|x-y|}{R} \bigr) \tfrac{(x-y)_j(x-y)_k}{R|x-y|} \Re[(\partial_j \overline{u}\partial_k u)](t,x)  \, dx\, dy \label{estq3} \\
& + \int |u|^2(t,y) \phi'\bigl( \tfrac{|x-y|}{R} \bigr) \tfrac{|x-y|}{R} \left[ |\nabla u|^2 - \tfrac{1}{2}|u|^4 + \tfrac{1}{2}|u|^3 \right](t,x)  \, dx\,  dy \label{estq4} \\
& - 2 \int \Im[\overline{u}\partial_k u](t,y) \phi'\bigl( \tfrac{|x-y|}{R} \bigr) \tfrac{(x-y)_j(x-y)_k}{R|x-y|} \Im[\overline{u}\partial_j u](t,x)  \, dx\,  dy \label{estq5} \\
& - \tfrac{1}{2} \int |u|^2(t,y) \Delta \bigl[ 4\phi\bigl( \tfrac{|x-y|}{R} \bigr) + \phi'\bigl( \tfrac{|x-y|}{R} \bigr) \tfrac{|x-y|}{R} \bigr] |u|^2(t,x)  \, dx\,  dy. \label{estq6}
\end{align}

Let $I = (0,K) \subseteq \mathbb{R}$ and $1 \leq R_0 \leq K^{\frac15}$.
We first  consider terms \eqref{estq3}--\eqref{estq5}. From the definition of $\phi$, we have
\begin{align*}
\int_{1 \leq R \leq R_0} &\tfrac{1}{R} \bigl| \phi'\bigl( \tfrac{|x-y|}{R} \bigr) \tfrac{(x-y)_j(x-y)_k}{|x-y|R} \bigr|\,  dR \\
&\lesssim \int_{1 \leq R \leq R_0} \tfrac{1}{R} \bigl| \phi'\bigl( \tfrac{|x-y|}{R} \bigr) \tfrac{|x-y|}{R} \bigr|\,  dR \lesssim 1,
\end{align*}
and this is supported on $|x-y| \lesssim R_0$. Hence,
\begin{equation}
\bigl| \int_I \int_{1 \leq R \leq R_0} \tfrac{1}{R} [\eqref{estq3}+\eqref{estq4}+\eqref{estq5}]\,  dR\,  dt \bigr| \lesssim K. \label{estqq1}  
\end{equation}

Next we estimate the term \eqref{estq5}. Notice that
\begin{align*}
&\biggl| \int_{1 \leq R \leq R_0} \tfrac{1}{R} \Delta \bigl[ 4\phi\bigl( \tfrac{|x-y|}{R} \bigr) + \phi'\bigl( \tfrac{|x-y|}{R} \bigr) \tfrac{|x-y|}{R} \bigr]\,  dR \biggr| \\
&=\biggl| \int_{1 \leq R \leq R_0} \tfrac{1}{R} \bigl[ \phi''\bigl( \tfrac{|x-y|}{R} \bigr) \tfrac{|x-y|}{R^3} + \phi''\bigl( \tfrac{|x-y|}{R} \bigr) \tfrac{6}{R^2} + \phi'\bigl( \tfrac{|x-y|}{R} \bigr) \tfrac{15}{R|x-y|} \bigr]\,  dR \biggr| \\
&\lesssim \tfrac{1}{1+|x-y|^2} + \tfrac{1}{|x-y| + |x-y|^2}.
\end{align*}
This, together with the Hardy--Littlewood--Sobolev inequality, Sobolev embedding, and the fact taht $u \in L_t^\infty H_x^1$ implies that
\begin{align}
&\int_I \int_{1 \leq R \leq R_0} \eqref{estq6}\, \tfrac{1}{R}\,  dR\,  dt\notag \\
&\lesssim \int_I \int_{1 \leq R \leq R_0} |u|^2(t,x) \bigl( 1 + \tfrac{1}{|x-y|^2} \bigr) |u|^2(t,y)  \, dx \, dy  \, dt\notag \\
&\lesssim K \bigl( \|u\|_{L_t^\infty L_x^2}^4 + \|u\|_{L_t^\infty L_x^{8/3}}^4 \bigr) \lesssim K. \label{estqq2}
\end{align}

Finally we handle the terms \eqref{estq1} and \eqref{estq2}. Let us we write $\psi(x) = \psi(|x|)$ for $x \in \mathbb{R}^4$. For each $s$ and $t$, there exists $\xi(s,t)$ such that
\[
\int \psi^2\bigl( \tfrac{x}{R} - s \bigr) \Im[e^{ix\cdot \xi(s,t)} {\bar u} \nabla (e^{ix\cdot \xi(s,t)} u)](t,x)  \, dx = 0. 
\]
Moreover, for any $s$, $t$, and $\xi(s,t)$, the quantity
\[
\int \psi^2( \tfrac{x}{R} - s ) \psi^2( \tfrac{y}{R} - s ) \left[ |\nabla u(t,x)|^2 |u(t,y)|^2 - \Im[\bar u\nabla u](t,x) \Im[\bar u\nabla u](t,y) \right] \,dx\,dy
\]
is invariant under the Galilean transformation $u(t,x) \mapsto e^{ix\cdot \xi(s,t)} u(t,x)$. Therefore, for any given $s$, $t$ we can choose $\xi(s,t)$ to eliminate the momentum squared terms. Then, integration by parts gives:
\begin{align*}
&\int \psi^2\bigl( \tfrac{x}{R} - s \bigr) [|\nabla (e^{ix\cdot \xi(s,t)} u(t,x))|^2 - |u(t,x)|^4 + |u(t,x)|^3]  \, dx \\
&= \int \bigl| \nabla \bigl( \psi\bigl( \tfrac{x}{R} - s \bigr) e^{ix\cdot \xi(s,t)} u(t,x) \bigr) \bigr|^2  \, dx - \int \psi^2\bigl( \tfrac{x}{R} - s \bigr) |u(t,x)|^4  \, dx \\
& + \int \psi^2\bigl( \tfrac{x}{R} - s \bigr) |u(t,x)|^3  \, dx + \int |u(t,x)|^2 \psi\bigl( \tfrac{x}{R} - s \bigr) \Delta_x \psi\bigl( \tfrac{x}{R} - s \bigr)  \, dx\notag\\
&=:A+\int |u(t,x)|^2 \psi\bigl( \tfrac{x}{R} - s \bigr) \Delta_x \psi\bigl( \tfrac{x}{R} - s \bigr)  \, dx.
\end{align*}
Recalling that $\|u\|_{L_t^\infty \dot{H}_x^1} \leq  \|W\|_{\dot{H}^1}$. Thus by sharp Sobolev embedding,
\[
\|u\|_{L^4(\mathbb{R}^4)} \leq  \|W\|_{L^4(\mathbb{R}^4)}. 
\]
Now $v = e^{ix\cdot \xi(s,t)} u$. Then by H\"older's inequality,
\begin{align*}
A &= \|\nabla (\psi v)\|_{L^2}^2 - \|\psi^2 |u|^4\|_{L^1} + \|\psi^2 |u|^3\|_{L^1}  \geq \|\psi^2 |u|^3\|_{L^1}.
\end{align*}
For the term related to $\Delta_x \psi$, we observe that
\[
\int \psi\bigl( \tfrac{x}{R} - s \bigr) \Delta_x \psi\bigl( \tfrac{x}{R} - s \bigr) \cdot \psi^2\bigl( \tfrac{y}{R} - s \bigr)  ds \lesssim \psi\bigl( \tfrac{|x-y|}{4R} \bigr) \bigl( \tfrac{1}{R^2} + \tfrac{1}{R|x-y|} \bigr),
\]
and hence
\begin{align}
&\biggl| \int_I \int_{1 \leq R \leq R_0}  \int \int |u(t,x)|^2 \psi\bigl( \tfrac{x}{R} - s \bigr) \Delta_x \psi\bigl( \tfrac{x}{R} - s \bigr)  |u(t,y)|^2  \, dx\,  dy\,  ds\,  \tfrac{dR}{R} \, dt \biggr|\notag \\
&\lesssim \int_I \int \int_{1 \leq R \leq R_0} |u(t,x)|^2 \bigl[ \psi\bigl( \tfrac{|x-y|}{4R} \bigr) \bigl( \tfrac{1}{R^2} + \tfrac{1}{R|x-y|} \bigr) \bigr] |u(t,y)|^2 \, \tfrac{dR}{R}\,   dx\,  dy\,  dt\notag \\
&\lesssim \int_I \int |u(t,x)|^2 \bigl( 1 + \tfrac{1}{|x-y|^2} \bigr) |u(t,y)|^2  \, dx\,  dy\, dt \lesssim K. \label{estqq3}
\end{align}

On the other hand, noticing that for any $|x-y| \leq R_0^{\frac12}$ and $R \geq R_0^{\frac12}$, we have:
\[
\int \psi^2\bigl( \tfrac{x}{R} - s \bigr) \psi^2\bigl( \tfrac{y}{R} - s \bigr)  ds \gtrsim 1.
\]
Thus,
\begin{align*}
&\int_I \int_{R_0^{1/2} \leq R \leq R_0}  \int |u(t,y)|^2 \psi^2\bigl( \tfrac{x}{R} - s \bigr) \psi^2\bigl( \tfrac{y}{R} - s \bigr) |u(t,x)|^3\,  ds\,  dx\,  dy \, \tfrac{dR}{R}\,  dt \\
&\gtrsim \int_I \int_{R_0^{1/2} \leq R \leq R_0} \int |u(t,y)|^2 |u(t,x)|^3\,  dx\,  dy\,  \tfrac{dR}{R}\,  dt \\
&\gtrsim \ln R_0 \int_I \int_{|x-y| \leq R_0^{1/2}} \int |u(t,y)|^2 |u(t,x)|^3\,  dx\,  dy\,  dt \\
&\gtrsim \ln R_0 \int_I \biggl[ \int_{|x-x(t)| \leq R_0^{1/2}} |u(t,x)|^3\,  dx \cdot \int_{|y-x(t)| \leq R_0^{1/2}} |u(t,y)|^2\,  dy \biggr]\,  dt,
\end{align*}
which together with \eqref{estqq1}--\eqref{estqq3} implies:
\begin{align*}
R_0^4 &\gtrsim \int_{1 \leq R \leq R_0}  \sup_t |M_R(t)|\,  \tfrac{dR}{R} \\
&\gtrsim \biggl| \int_I \int_{1 \leq R \leq R_0} \tfrac{d}{dt} M_R(t)\,  dt  \tfrac{dR}{R} \biggr| \\
&\gtrsim \ln R_0 \int_I \biggl[ \int_{|x-x(t)| \leq R_0^{1/2}} |u(t,x)|^3\,  dx \int_{|y-x(t)| \leq R_0^{1/2}} |u(t,y)|^2\,  dy \biggr] dt - K.
\end{align*}

Since $1 \leq R_0 \leq K^{1/5}$, we have:
\[
\ln R_0 \int_I \biggl[ \int_{|x-x(t)| \leq R_0^{1/2}} |u(t,x)|^3\,  dx \cdot \int_{|y-x(t)| \leq R_0^{1/2}} |u(t,y)|^2\,  dy \biggr]\,  dt \lesssim K. 
\]
Therefore, there exists sequences $t_n \geq 0$ and $R_{0,n}\to\infty$ such that either
\begin{equation}\label{sequencefinal1}
\int_{|x-x(t_n)| \leq R_0^{1/2}}|u(t_n,x)|^3\,  dx \to 0
\end{equation}
or
\begin{equation}\label{sequencefinal2}
    \int_{|y-x(t_n)| \leq R_0^{1/2}} |u(t_n,y)|^2\,  dy \to 0.
\end{equation}
As $\{u(t_n,x-x(t_n))\}$ is precompact in $\dot{H}^1$ and bounded in $L^2$, there exists a non-zero $\phi\in H^1$ such that
\[
\|u(t_n,x-x(t_n))-\phi\|_{\dot{H}^1}\to 0
\]
up to a subsequence. This yields a contradiction, as either \eqref{sequencefinal1} or \eqref{sequencefinal2} would imply that $\phi\equiv0$.
\end{proof}


\section{Small modulation for all time}\label{S:SMAT}

In this section we prove Proposition~\ref{P6}. 

\begin{proof}[Proof of Proposition~\ref{P6}] We recall the definition of $\mathcal{B}$ and $m_c$ from Section~\ref{S:strategy}.  We suppose $\mathcal{B}\neq\emptyset$.  Our goal is then to find a solution $v\in\mathcal{B}$ satisfying $\delta(v(t))\leq\eta_*$ for all $t\geq 0$. 

\bigskip

\textbf{Part 1.} Our approach utilizes the notion of a minimal mass blowup solution in $\mathcal{B}$.  We begin by choosing $u_n\in\mathcal{B}$ such that $M(u_n)\to m_c\in[0,\infty)$. We then choose a sequence $t_n^0\to\infty$ such that 
\[
\|u_n\|_{L_{t,x}^6\cap L_{t,x}^3([0,t_n^0)\times\R^4))}\geq n. 
\]
For each $n$, we now consider two possible cases:
\begin{itemize}
\item[(i)] $\delta(u_n(t))\leq \eta_*$ for all $t\geq t_n^0$, or
\item[(ii)] there exists $t_n\geq t_n^0$ such that $\delta(u(t_n))>\eta_*$.
\end{itemize}

If case (i) occurs for some $n$, then the solution $v\in\mathcal{B}$ defined by $v(t):=u_n(t+t_n^0)$ satisfies $\delta(v(t))\leq \eta_*$ for all $t\geq 0$, yielding the desired solution. 


It therefore remains to consider the possibility that case (ii) occurs for all $n$.  In this case, by Proposition~\ref{P4} we may assume without loss of generality that $\lambda_n(t_n)\equiv 1$ (where $\lambda_n$ is the frequency scale function for $u_n$).  We define the solutions $v_n(t)=u_n(t+t_n)$, which satisfy
\begin{equation}\label{vnbu}
\|v_n\|_{L_{t,x}^6\cap L_{t,x}^3((0,\infty)\times\R^4)} =\infty\qtq{and} \|v_n\|_{L_{t,x}^6\cap L_{t,x}^3((-\infty,0)\times\R^4)}\to\infty.
\end{equation}
We apply the profile decomposition (Theorem~\ref{Profi}) to the sequence $\{v_n(0)\}$.  This yields
\[
v_n(0) = \sum_{j=1}^J \phi_n^j + R_n^J
\]
for $J\leq J^*$, with all of the properties stated in Theorem~\ref{Profi}.  The argument now follows along the lines of the proof of Proposition~\ref{P2}. 

If $J^*=0$, then the vanishing property of the remainders, together with the stability lemma (applied to the approximate solutions $e^{it\Delta}v_n(0)$), we obtain that the solutions $v_n$ scatter in both time directions, contradicting \eqref{vnbu}.  Thus we have $J^*\geq 1$.

We now claim that $J^*=1$. We suppose instead that $J^*\geq 2$. In the proof of Proposition~\ref{P2}, the profile decomposition was applied to a single $H^1$ solution satisfying the energy constraint \eqref{Thres}, while here the profile decomposition is applied to a sequence of solutions all satisfying \eqref{Thres} and with mass converging to $m_c$.  It follows that we can repeat the construction of the approximate solutions (denoted $u_n^J$ in the proof of Proposition~\ref{P2}) satisfying global spacetime bounds.  We then apply the stability theorem to deduce global spacetime bounds for the solutions $v_n$, contradicting \eqref{vnbu}. 

We therefore obtain a decomposition of the form
\[
v_n(0) = \phi_n + R_n, 
\]
with $R_n\to 0$ strongly in $\dot H^1$ and satisfying the vanishing property under the free evolution. We can again argue as in the proof of Proposition~\ref{P2} to preclude the possibility that the time translation parameters or physical scale associated to $\phi_n$ diverge.  Furthermore, the spatial translation parameters play no meaningful role in the argument and thus we assume without loss of generality that they vanish as well.  Then we have that either 
\begin{equation}\label{cmcc1}
\phi_n \equiv \psi \qtq{for some} \psi\in H^1\backslash\{0\}
\end{equation}
or
\begin{equation}\label{cmcc2}
\phi_n = \beta_n^{-1} [P_{>\beta_n^\theta}\psi](\tfrac{\cdot}{\beta_n})\qtq{for some}\beta_n\to0,\quad  \psi\in\dot H^1\backslash\{0\}.
\end{equation}
In case \eqref{cmcc2}, observe that by Bernstein estimates, we have 
\[
\|\phi_n\|_{L^2} \lesssim \beta_n^{1-\theta}\|\psi\|_{\dot H^1} \to 0. 
\]

Note that in either case we have that $E(R_n)\to 0$, from which we derive $E(\psi)=E^c(W)$ in the setting of \eqref{cmcc1} and $E(\phi_n)\to E^c(\psi)=E^c(W)$ in the setting of \eqref{cmcc2}. We further claim that $\|\psi\|_{\dot H^1}<\|W\|_{\dot H^1}$. Indeed, this follows from strong $\dot H^1$ convergence and by the construction of the sequence $t_n$, which guarantees
\[
\|v_n(0)\|_{\dot H^1}^2 = \|u_n(t_n)\|_{\dot H^1}^2\leq \|W\|_{\dot H^1}^2-\eta_*. 
\]

In the setting of of Proposition~\ref{P2}, either of \eqref{cmcc1} or \eqref{cmcc2} was acceptable to complete the proof.  Here we must additionally rule out the possibility of \eqref{cmcc2}.  Thus we suppose that \eqref{cmcc2} holds and look for a contradiction.  In fact, at this point we can argue much as in the proof of Proposition~\ref{P4} to deduce uniform spacetime bounds for the solutions $u_n$ on either $[0,t_n)$ or $[t_n,\infty)$ (a contradiction in either case), as we now sketch. 

By the constraints on $\psi$ and the classification result of \cite{MaMiaoMurphyZheng2025}, the solution $w$ to \eqref{ECgNLS} with $w|_{t=0}=\psi$ either scatters in both time directions or equals the heteroclinic orbit $W^-$ modulo symmetries. Thus $w$ has finite $L_{t,x}^6$-norm either forward or backward in time (or both). Let $I=\R_+$ or $I=\R_-$ be an interval on which $w$ has finite $L_{t,x}^6$ norm. 

We then let $w_n$ be the solution to \eqref{ECgNLS} with $w_n(0)=P_{>\beta_n^\theta}\psi$, which (by stability theory for \eqref{ECgNLS}) also scatters on $I$ with uniform bounds for $n$ sufficiently large.  In particular, using persistence of regularity, we have
\[
\|\nabla w_n\|_{S(I)}\lesssim 1 \qtq{and} \|w_n\|_{S(I)}\lesssim \beta_n^{-\theta}. 
\]
The rescaled solutions $\tilde u_n(t,x)=\beta_n^{-1}w_n(\beta_n^{-2} t,\beta_n^{-1}x)$ to \eqref{ECgNLS} then satisfy
\[
\|\nabla \tilde u_n\|_{S(I)} \lesssim 1 \qtq{and}\|\tilde u_n\|_{S([0,\infty)} \lesssim \beta_n^{1-\theta}\to 0,
\]
so that
\[
\|\langle \nabla\rangle |\tilde u_n| \tilde u_n\|_{L_{t,x}^{\frac32}(I\times\R^4)} \lesssim \beta_n^{1-\theta}\to 0.
\]
Applying stability for \eqref{NLS}, we deduce the existence of solutions $z_n$ to \eqref{NLS} with $z_n(0)=\phi_n$ satisfying
\[
\|\langle\nabla\rangle z_n\|_{S(I)}\lesssim 1 \qtq{and} \lim_{n\to\infty} \|z_n\|_{L_{t,x}^3(I\times\R^4)} = 0. 
\]

We now let 
\[
U_n(t,x) = z_n(t,x) + e^{it\Delta} R_n.
\]
These functions obey global spacetime bounds and asymptotically solve \eqref{NLS} as $n\to\infty$ (cf. Step 2 in the proof of Proposition~\ref{P2}). Moreover, we have
\[
\|\langle\nabla\rangle e^{it\Delta}[U_n(0) - v_n(0)]\|_{L_{t,x}^3(I\times\R^4)} \lesssim \|e^{it\Delta} R_n\|_{L_{t,x}^3} + \|R_n\|_{\dot H^1} \to 0,
\]
which implies, by stability (see Lemma~\ref{stabi} and Remark~\eqref{R:stabi}), that
\[
\|v_n\|_{L_{t,x}^3\cap L_{t,x}^6(I\times\R^4)}\lesssim 1
\]
uniformly for large $n$. In particular, if $I=[0,\infty)$ this yields global spacetime bounds for the solutions $v_n$ on $(0,\infty)$, while if $I=(-\infty,0]$ we obtain bonds for $v_n$ on $(-\infty,0]$.  In either case, we obtain a contradiction to \eqref{vnbu}. 

We have therefore established that \eqref{cmcc1} holds, and thus (restoring the parameters $x_n$ for the sake of completeness) we have
\[
u_n(t_n,x-x_n) = \psi(x) + R_n(x),
\]
where $\psi\in H^1\backslash\{0\}$, $r_n\to 0$ strongly in $\dot H^1$ and weakly in $L^2$, and
\begin{equation}\label{mc-rn}
\lim_{n\to\infty} \| e^{it\Delta}R_n\|_{L_{t,x}^6\cap L_{t,x}^3} = 0.
\end{equation}

Now, using the properties of the profile decomposition, we have that
\[
m_c = M(u_n) + o(1) = M(\psi) + M(R_n) + o(1) \qtq{as}n\to\infty. 
\]
In particular, as $\psi\neq 0$, we obtain that $m_c\geq M(\psi)>0$. We also recall that $E(\psi)=E^c(W)$ and $\|\psi\|_{\dot H^1}<\|W\|_{\dot H^1}$.

We would now like to show that $M(\psi)=m_c$, so that $r_n\to 0$ in $L^2$ and hence $u_n(t_n,\cdot-x_n)\to \psi$ strongly in $H^1$.  Suppose instead that $M(\phi)<m_c$.  We then let $w_n$ be the solution to \eqref{NLS} with $w_n(0)=\phi(\cdot+x_n)$.  In light of the properties established above, we find that $w_n$ satisfies the energy constraint \eqref{Thres}. Using the definition of $m_c$ and the fact that $M(\phi)<m_c$, it follows that $w_n\notin\mathcal{B}$, i.e. $w_n$ scatters forward in time.  Moreover, by translation invariance, the scattering norms of $w_n$ are bounded uniformly in $n$. 

Recalling the solutions $v_n(t)=u_n(t+t_n)$, we now observe that
\[
\|w_n(0)-v_n(0)\|_{H^1} = \|R_n\|_{H^1} \lesssim 1
\]
and 
\[
\|\langle\nabla\rangle e^{it\Delta}[w_n(0)-v_n(0)]\|_{L_{t,x}^3} \lesssim \|e^{it\Delta} R_n\|_{L_{t,x}^3} + \|w_n(0)-v_n(0)\|_{\dot H^1} \to 0
\]
as $n\to\infty$. It follows from stability (see Lemma~\ref{stabi} and Remark~\eqref{R:stabi}) that 
\[
\|u_n\|_{L_{t,x}^3\cap L_{t,x}^6([t_n,\infty)\times\R^4)}=\|v_n\|_{L_{t,x}^3\cap L_{t,x}^3([0,\infty)\times\R^4)} \lesssim 1
\]
for all $n$ sufficiently large.  However, this contradicts the fact that $u_n\in\mathcal{B}$ for all $n$.

We therefore obtain that $M(\psi)=m_c$ and that $u_n(t_n,\cdot+x_n)$ converges strongly to $\psi$ in $H^1$.  We then let $u_c$ be the solution to \eqref{NLS} with $u_c(0)=\psi$.  This solution satisfies the energy constraint $\eqref{Thres}$ as well as the minimal mass condition $M(u_c)=m_c$.  Moreover, by stability, $u_c$ blows up in forward time, so that $u_c\in\mathcal{B}$.

\bigskip

\textbf{Part 2.} We turn to the next main step of the proof, in which we use the solution $u_c$ to deduce the existence of the solution obeying small modulation for all forward time.

By Proposition~\ref{P5}, we may choose a sequence $t_n\to\infty$ such that $\delta(u_c(t_n))\to 0$.  We may assume that $\delta(u_c(t_n))<\eta_*$ for all $n$.

Now, if 
\[
\exists t_0>0\qtq{such that}
\sup_{[t_0,\infty)}\delta(u_c(t))\leq \eta_*,
\]
then we can conclude the proof of Proposition~\ref{P6} immediately by using the solution $v\in \mathcal{B}$ defined by $v(t)=u_c(t+t_0)$. Thus it suffices to consider the alternate case.  

In this case, by passing to a subsequence, we can find a sequence $t_n^-\to\infty$ such that
\[
t_n^- < t_n < t_{n+1}^- \qtq{and} \delta(u_c(t_n^-))>\eta_*. 
\]

As $t\mapsto\delta(u_c(t))$ is continuous, we can then define
\begin{equation}\label{deftn+}
t_{n}^{+} = \inf \bigl\{ t \in \R : \sup_{\tau \in [t, t_{n}]} \delta(u_c(t))
< \eta_{\ast}  \big\},
\end{equation}
which satisfies $t_{n}^{+} \to\infty$ (as $t_n^+>t_n^-$) and
\begin{equation}\label{f4.2}
\delta(t_n^{+})= \eta_{\ast}.
\end{equation}

We now claim that because of the minimal mass property,
\begin{equation}\label{ucconverges}
u_c(t_n^+,x-x(t_n^+)) \qtq{converges strongly in}H^1\qtq{as}n\to\infty. 
\end{equation} 

Indeed, arguing as before, we can combine the profile decomposition with $\dot H^1$-compactness (together with \eqref{f4.2} and Proposition~\ref{P4}) to obtain a decomposition of the form
\[
u_c(t_n^+,x-x(t_n^+)) = \phi_n + R_n,
\]
where either 
\[
\phi_n\equiv \phi\qtq{for some}\phi\in H^1\backslash\{0\}
\]
or 
\begin{equation}\label{m20case}
\phi_n = \beta_n^{-1}[P_{>\beta_n^\theta}\phi](\tfrac{\cdot}{\beta_n}) \qtq{for some}\phi\in\dot H^1\backslash\{0\},\ \beta_n\to 0. 
\end{equation}
In either case, we have $R_n\to 0$ in $\dot H^1$, $R_n$ bounded in $L^2$, and 
\[
\lim_{n\to\infty} \|e^{it\Delta}R_n\|_{L_{t,x}^3\cap L_{t,x}^6}=0.
\]
Moreover we have $E(R_n)\to 0$.

Arguing as above, if \eqref{m20case} holds, then we may use approximation by suitable solutions to energy-critical NLS and stability theory to ultimately derive spacetime bounds for the solution $u_c$ on either $[0,t_n^+)$ or $[t_n^+,\infty)$, yielding a contradiction to the fact that $u_c\in\mathcal{B}$.  Thus we conclude that $\phi_n\equiv \phi$.

It now remains to prove that $R_n\to 0$ in $L^2$.  To this end, we observe that 
\[
E(\phi)\leq E(u_c)=E^c(W)\qtq{and}\| \phi\|_{\dot H^1}^2\leq \| u_c(t_n^+)\|_{\dot H^1}^2\leq\| W\|_{\dot H^1}^2-\eta_*.
\]
We claim that $M(\phi)=M(u_c)=m_c$ , so that we obtain the desired convergence in $H^1$.  Suppose instead that $M(\phi)<m_c$. Then, since
\[
E(\phi)\leq E^c(W),\quad \|\nabla \phi\|_{L^2}<\|\nabla W\|_{L^2},\qtq{and} M(\phi)<m_c,
\]
it follows that the solutions $v_n$ to \eqref{NLS} with $v_n(t_n)=\phi(\cdot+x(t_n^+))$ are global and scatter (with spacetime norms bounded uniformly in $n$).

We now observe
\[
\|u_c(t_n) - v_n(t_n)\|_{H^1} = \|R_n\|_{H^1} \lesssim 1
\]
and
\begin{align*}
\|\langle \nabla\rangle e^{i(t-t_n)\Delta}[u_c(t_n)&-v_n(t_n)]\|_{L_{t,x}^3([t_n,\infty)\times\R^4)} \\
&\lesssim \|e^{it\Delta} R_n\|_{L_{t,x}^3} + \|R_n\|_{\dot H^1} \to 0.
\end{align*}
Thus by stability we deduce spacetime bounds for $u_c$ on $[t_n^+,\infty)$, a contradiction. 

\bigskip

\textbf{Part 3.} Finally, let $u$ be the solution to \eqref{NLS} with  $u|_{t=0}=\phi$. Then $u$ satisfies the energy constraint \eqref{Thres} and the mass constraint $M(u)=M(u_c)$.  Moreover, using stability together with the facts that $u_c$ blows up forward in time and $t_n^+\to\infty$, we have that $u$ blows up in both time directions. In particular, $u\in\mathcal{B}$ (and hence is forward global by Proposition~\ref{P2}).

We now claim that
\begin{equation}\label{final_claim}
 \sup_{t\in[0,\infty)} \delta(u(t))\leq \eta_*,
\end{equation}
so that $u$ yields the desired solution with small modulation for all forward time. Note that $\delta(u(0))\leq\eta_*$ by construction. To prove \eqref{final_claim}, we claim that it will suffice to prove that for fixed $T>0$, 
\begin{equation}\label{finalz}
[0,T]\subset [0, t_n-t_n^+]\ \qtq{for sufficiently large}n.
\end{equation}
Indeed, fixing $T$ and supposing \eqref{finalz} holds, we define 
\[
\tilde{u}_n(t,x):=u_c(t+t_n^+,x-x(t_n^+))
\]
and use a change of variables, the definition \eqref{deftn+}, and stability to obtain
\begin{align*}
\delta(u(T)) & \leq \delta(\tilde u_n(T)) + \|\tilde u_n(T)\|_{\dot H^1}^2 - \|u(T)\|_{\dot H^1}^2 \\
& \leq \delta(u(t_n^++ T)) + C\|\tilde u_n(T)-u(T)\|_{\dot H^1} \\
& \leq \eta_* + o(1)\qtq{as}n\to\infty.
\end{align*}
This indeed yields \eqref{final_claim} and completes the proof of Proposition~\ref{P6}.

We turn to the proof of \eqref{finalz}.  Recalling that that $\|\tilde{u}_n(0)-u(0)\|_{H^1}\to 0$, stability yields
\[
\max_{t\in[0,T]}\|\tilde{u}_n(t)-u(t)\|_{H^1}\to 0,
\]
so that
\begin{equation}\label{finalzz}
\max_{t\in[0,T]}|\delta(\tilde{u}_n(t))-\delta(u(t))|\to 0.
\end{equation}

Now suppose that \eqref{finalz} fails, so that $[0,t_n-t_n^+]\subset[0,T]$.  Passing to a subsequence, we then have $t_n-t_n^+\to t_0\in[0,T]$. As
\[
\delta(\tilde u_n(t_n-t_n^+)) = \delta(u_c(t_n))\to 0,
\]
we obtain from \eqref{finalzz} that
\[
|\delta(u(t_n-t_n^+))| \leq |\delta(\tilde u_n(t_n-t_n^+))| + \max_{t\in[0,T]}|\delta(\tilde{u}_n(t))-\delta(u(t))| \to 0. 
\]
However, by continuity of the flow, this implies that $\delta(u(t_0))=0$, which contradicts \eqref{PositiveP}. Thus we finally complete the proof of \eqref{finalz} and Proposition~\ref{P6}. \end{proof}


\section{Preclusion of small modulation solutions}\label{S:preclude}

In this section we prove Proposition~\ref{P7}. 

We first record the following lemma, which is a direct consequence of the modulation analysis (see Lemma~\ref{modula}).  We also recall the notation \eqref{mod-notation}.  We remark throughout in this section we use $\lambda(\cdot)$ and $x(\cdot)$ to denote the modulation parameters of the solution, whereas in previous sections these denoted compactness parameters.

\begin{lemma}[Modulation]\label{Modulation} Suppose $u\in\mathcal{B}$ satisfies 
\[
\sup_{t\in[0,\infty)}\delta(u(t))\leq \eta_*
\]
for sufficiently small $\eta_*>0$. Then there exist continuous functions
\begin{equation*}
\theta:[0,\infty)\rightarrow \R,\quad x:  [0,\infty)\rightarrow \mathbb{R}^4 ,\quad \lambda: [0,\infty)\rightarrow \mathbb{R}_+, \quad \alpha: [0,\infty)\rightarrow \mathbb{R}_+
\end{equation*}
such that if 
\begin{equation}\label{decompzz}
    v(t):=\alpha(t)W+h(t)=u_{[\theta(t),\lambda(t),x(t)]}(t)-W,
\end{equation}
then there exists $C>0$ such that
\begin{equation}\label{mo1z}
|\alpha(t)|\sim \|v(t)\|_{\dot{H}^1}\sim \|h(t)\|_{\dot{H}^1}\sim \delta(u(t)), 
\end{equation}
\begin{equation}\label{mo3z}
\|u(t)\|_{L_x^{3}}^{3}\leq C\delta(u(t))^2,
\end{equation}
\begin{equation}\label{mo2z}
\bigl|x'(t) - \tfrac{\lambda'(t)}{\lambda(t)}x(t)\bigr| +|\alpha'(t)|+|\theta'(t)|+\bigl|\tfrac{\lambda'(t)}{\lambda(t)}\bigr|\leq C\lambda(t)^2 \delta(u(t)),
\end{equation}
\begin{equation}\label{mo4z}
\frac{\lambda(t)^{\frac32}}{[1+\lambda(t)^2]^{\frac32}}\leq  C\delta(u(t)).
\end{equation}
\end{lemma}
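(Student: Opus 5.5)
The plan is to run the Duyckaerts--Merle modulation argument for the energy-critical problem, viewing the mass-critical term as a perturbation. The modulation parameters are built purely from $\dot H^1$ information. The extra defocusing term is then controlled through the energy identity and the conservation of mass. Throughout I take $u_{[\theta,\lambda,x]}(t,y)=e^{i\theta}\lambda^{-1}u(t,\lambda^{-1}y+x)$ (or the analogous normalization of \eqref{mod-notation}), so that $E^c$ and $\|\nabla\cdot\|_{L^2}$ are invariant under this rescaling.

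\textbf{Step 1 (static decomposition).} Write $E^c(u(t))=E(u(t))-\tfrac13\|u(t)\|_{L^3}^3=E^c(W)-\tfrac13\|u(t)\|_{L^3}^3\le E^c(W)$. By the variational characterization of $W$ and a standard compactness argument, for $\delta(u)\le\eta_*$ small, $u(t)$ is $\dot H^1$-close to the orbit $\{e^{i\theta}\lambda W(\lambda(\cdot-x))\}$. The implicit function theorem then yields continuous $(\theta,\lambda,x)$ such that $h$ is $\dot H^1$-orthogonal to $iW$, $\Lambda W=W+y\cdot\nabla W$, $\partial_jW$ and $W$. Expanding
\[
E^c(W+v)=E^c(W)+Q(v)+\mathcal O(\|v\|_{\dot H^1}^3),
\qquad v=\alpha W+h,
\]
gives $Q(v)=-c\alpha^2+Q(h)$, where $Q(h)\ge c\|h\|_{\dot H^1}^2$ by coercivity on the orthogonal complement. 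Meanwhile $\delta(u)=-2\alpha\|\nabla W\|^2-\|\nabla v\|^2$, so $|\alpha|\sim\delta$. The energy identity becomes
\[
Q(h)+\tfrac13\|u\|_{L^3}^3=c\alpha^2+\mathcal O(\delta^3).
\]
Both terms on the left are nonnegative, so $\|h\|_{\dot H^1}^2+\|u\|_{L^3}^3\lesssim\delta^2$. This gives \eqref{mo1z} and \eqref{mo3z}. The key point is that the defocusing sign of the quadratic term makes it enter with a good sign.

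\textbf{Step 2 (bounds on $\lambda$, \eqref{mo4z}).} In the original variables, $u=\lambda W(\lambda(\cdot-x))+\tilde v$ with $\|\tilde v\|_{L^4}\lesssim\|\tilde v\|_{\dot H^1}\lesssim\delta$.

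For $\lambda\gtrsim1$, I restrict to the ball $B=B(x,\lambda^{-1})$. There $\|\lambda W(\lambda\cdot)\|_{L^3(B)}\sim\lambda^{-1/3}$, while Hölder gives $\|\tilde v\|_{L^3(B)}\lesssim\lambda^{-1/3}\delta$. Combined with \eqref{mo3z}, this yields $\lambda^{-1}\lesssim\delta^2$, so $\lambda^{-3/2}\lesssim\delta$.

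For $\lambda\lesssim1$, I instead use conservation of mass. On $B(x,R/\lambda)$ one has $\|\lambda W(\lambda\cdot)\|_{L^2}^2\sim\lambda^{-2}\log R$, whereas the error is $\lesssim R^2\lambda^{-2}\delta^2$. Choosing $R=\delta^{-1/2}$ gives $\lambda^{-2}\log(1/\delta)\lesssim M(u)$. Hence small $\lambda$ is impossible once $\eta_*$ is small relative to $M(u)$, and in that regime \eqref{mo4z} follows trivially.

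\textbf{Step 3 (modulation ODEs, \eqref{mo2z}).} Write the equation for $v$ in the rescaled time $s$ with $ds=\lambda^2dt$, and differentiate the orthogonality conditions in $s$. The energy-critical part gives the standard Duyckaerts--Merle bound $\mathcal O(\|v\|_{\dot H^1})=\mathcal O(\delta)$ on $\alpha_s$, $\theta_s$, $\lambda_s/\lambda$ and the translation combination. The quadratic term $|u|u$ appears in rescaled variables with a factor $\lambda^{-1}$. Paired against the smooth, decaying directions $iW$, $\Lambda W$, $\partial_jW$, $W$, it contributes $\mathcal O(\lambda^{-1})$, which is $\mathcal O(\delta^2)$ by Step 2. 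Converting back to $t$ multiplies by $\lambda^2$, which gives \eqref{mo2z}.

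\textbf{Main obstacle.} I expect the hardest step to be the lower bound on $\lambda$ in Step 2. The energy-critical analysis does not see the scale at all, so the only scale information comes from mass conservation and the $L^3$ bound. The localization radii must be chosen so that the Sobolev-controlled error $\tilde v$ does not swamp the ground-state contribution. This only works because $W\notin L^2(\R^4)$, whose mass diverges logarithmically.
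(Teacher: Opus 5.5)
Your argument is correct and has the same overall architecture as the paper's Appendix~\ref{S:Modulation}:
\begin{itemize}
\item closeness to the orbit of $W$, with orthogonality obtained from the implicit function theorem;
\item the expansion $Q(\alpha W+h)=-\|\nabla W\|_{L^2}^2\alpha^2+Q(h)$, where the defocusing $L^3$ term enters with a good sign (this is exactly Lemma~\ref{Lce1});
\item projection of the equation onto $W,iW,\Lambda W,\partial_jW$.
\end{itemize}

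Where you differ is in controlling the scale and the quadratic term. The paper (Lemma~\ref{Ders}) works on a ball of fixed radius $1$, using the pointwise bound $\mu W(\mu\cdot+x)\gtrsim\mu/(1+\mu^2)$. This only says $\mu\gtrsim\delta^{-2/3}$ or $\mu\lesssim\delta^{2/3}$, which is why \eqref{mo4z} has its symmetric form. The paper rules out small $\mu$ separately and only qualitatively (Remark~\ref{Rem1}, via weak $L^2$ compactness and $W\notin L^2$). In the modulation equations it bounds $|u|u$ by interpolation, $\||u|u\|_{L^{4/3}}\le\|u\|_{L^3}^{3/2}\|u\|_{L^2}^{1/2}\lesssim\delta$, i.e.\ through mass conservation.

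Your ball of radius $\lambda^{-1}$ gives the sharper bound $\lambda^{-1}\lesssim\delta^2$. After that, the $\lambda^{-1}$ prefactor of $|g|g$ in the rescaled equation is $O(\delta^2)$ without any use of the mass.

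In fact your Case~1 computation is exactly scale-invariant: $\|\lambda W(\lambda\cdot)\|_{L^3(B(0,\lambda^{-1}))}\sim\lambda^{-1/3}$ and $\|\tilde v\|_{L^3(B)}\lesssim\lambda^{-1/3}\delta$ hold for every $\lambda>0$. So it already excludes small $\lambda$, and you can drop Case~2 together with the dependence of $\eta_*$ on $M(u)$ that it introduces. For the same reason, your remark that the scale control hinges on $W\notin L^2$ is not accurate. Done this way, all constants are independent of $u$, whereas the paper's interpolation bound carries a factor $M(u)^{1/4}$.

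A minor ordering point in Step~1: write the remainder as $o(\|v\|_{\dot H^1}^2)$ and absorb it to get $\|h\|_{\dot H^1}\lesssim|\alpha|$. Only then read off $|\alpha|\sim\delta$ from $\delta=-2\alpha\|\nabla W\|_{L^2}^2-\|\nabla v\|_{L^2}^2$.
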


We will also need the following lemma establishing a lower bound for the virial quantity in terms of $\delta(u)$. We recall the notation from Section~\ref{S:virial}.

\begin{lemma}\label{Compa} Let $u$ be an $H^1$ solution to \eqref{NLS} satisfying \eqref{Thres}. Then there exists $c_0>0$ such that
\begin{equation}\label{InequeN}
{{F^{c}_{\infty}}}[u(t)]=8\|\nabla u(t)\|^{2}_{L^{2}}-8\|u(t)\|^{4}_{L^{4}}\geq c_0\delta(u(t))
\qtq{for all $t\geq 0$.}
\end{equation}
\end{lemma}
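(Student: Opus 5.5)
We need to show $G[u]=\|\nabla u\|^2-\|u\|^4_{L^4}\gtrsim \delta(u)$ for $H^1$ solutions obeying \eqref{Thres}. By Lemma~\ref{GlobalS}, $\|\nabla u(t)\|_{L^2}<\|\nabla W\|_{L^2}$ for all $t$. The plan is to use only the energy identity and the sharp Sobolev inequality \eqref{GI}, treating the cases of $\delta$ small and $\delta$ not small separately. It is convenient to write $y=\|\nabla u\|_{L^2}^2$ and $a=\|\nabla W\|_{L^2}^2$, so that $\delta=a-y\in(0,a]$ and, by \eqref{PoQ}, $E^c(W)=a/4$.

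\textbf{Step 1 (energy identity).} From $E(u)=E^c(W)$,
\[
\tfrac14\|u\|_{L^4}^4=\tfrac12 y+\tfrac13\|u\|_{L^3}^3-\tfrac a4 .
\]
Substituting this into $G$ gives
\[
G[u]=y-\bigl(2y+\tfrac43\|u\|_{L^3}^3-a\bigr)=\delta-\tfrac43\|u\|_{L^3}^3 .
\]
This is not immediately usable, because the $L^3$ term carries the wrong sign. So we discard the energy and instead use sharp Sobolev directly:
\[
\|u\|_{L^4}^4\le C_{GN}^4 y^2=\frac{y^2}{a},
\]
since $C_{GN}^4=\|W\|_4^4/\|\nabla W\|_2^4=1/a$ by \eqref{PoQ}. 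Then
\[
G[u]\ge y-\frac{y^2}{a}=\frac{y(a-y)}{a}=\frac{y\,\delta}{a}.
\]

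\textbf{Step 2 (lower bound on $y$).} It remains to bound $y=\|\nabla u(t)\|_2^2$ from below uniformly in $t$. The energy gives
\[
\tfrac a4=E(u)\le \tfrac12 y+\tfrac13\|u\|_{L^3}^3 ,
\]
so the task is to control $\|u\|_3^3$ in terms of $y$. By Gagliardo--Nirenberg in $4d$, $\|u\|_3^3\lesssim \|u\|_2\|\nabla u\|_2^2=M(u)^{1/2}y$, using mass conservation. Hence
\[
\tfrac a4\le \Bigl(\tfrac12+C M(u_0)^{1/2}\Bigr)y ,
\]
which yields $y\ge c(M(u_0))>0$. Consequently
\[
G[u]\ge \frac{c}{a}\,\delta(u(t)),
\]
and we obtain $c_0=8c/a$, depending only on the mass of $u$.

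\textbf{Main obstacle.} There is no real difficulty here; the only subtlety is that $c_0$ depends on $M(u)$. This is harmless, since the lemma is applied to a fixed solution $u$. If a mass-independent constant were needed, one could instead combine \eqref{IneW}, applied in the form $E^c(u)\ge \tfrac14 y$, with $E^c(u)=E^c(W)-\tfrac13\|u\|_3^3$. This route fails to give a uniform bound, which is why the Gagliardo--Nirenberg argument with mass conservation is used.
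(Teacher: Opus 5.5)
Your argument is correct, and it takes a different route from the paper. The paper starts from the same energy identity you derive and then set aside, $F^c_\infty[u]=8\delta(u)-\tfrac{32}{3}\|u\|_{L^3}^3$, and argues by contradiction. If $F^c_\infty[u(t_n)]\le\tfrac1n\delta(u(t_n))$, then $F^c_\infty[u(t_n)]\to0$, which via \eqref{IGU} is taken to force $\delta(u(t_n))\to0$. In that regime the modulation bound $\|u\|_{L^3}^3\lesssim\delta^2$ from Appendix~\ref{S:Modulation} makes the $L^3$ term negligible and gives $F^c_\infty\ge 4\delta$, a contradiction.

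You instead use sharp Sobolev quantitatively, $G[u]\ge\|\nabla u\|_{L^2}^2\,\delta(u)/\|\nabla W\|_{L^2}^2$, and then bound $\|\nabla u\|_{L^2}^2$ from below using energy conservation, Gagliardo--Nirenberg and mass conservation. This is more elementary, since it needs no modulation theory, and it gives an explicit constant depending only on $M(u)$. It also supplies a step the paper leaves implicit. The bound \eqref{IGU} only yields $\|\nabla u(t_n)\|_{L^2}^2\,\delta(u(t_n))\to0$, so deducing $\delta(u(t_n))\to0$ requires excluding $\|\nabla u(t_n)\|_{L^2}\to0$, which is precisely your Step~2.

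Mass dependence is in fact unavoidable for general $\delta$: spread-out data of large mass can have energy $E^c(W)$ with arbitrarily small kinetic energy. However, your Step~1 alone gives $F^c_\infty[u]\ge 8\bigl(1-\eta_*/\|\nabla W\|_{L^2}^2\bigr)\delta(u)$ whenever $\delta(u)\le\eta_*$. This is a universal constant in exactly the regime where the lemma is used in Proposition~\ref{P7}. It removes any concern that $c_0$ depends on a solution that was itself constructed after fixing $\eta_*$.

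Finally, the alternative via \eqref{IneW} in your last paragraph only yields $\tfrac13\|u\|_{L^3}^3\le\tfrac14\delta(u)$, hence $G[u]\ge0$. It is useless, so that remark can simply be dropped.
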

\begin{proof}
We recall that $E^{c}(W)=\frac{1}{4}\|\nabla W\|^{2}_{L^{2}}$.  By using $E(u)=E^{c}(W)$ we have 
\begin{equation}\label{Pohi22}
{{F^{c}_{\infty}}}[u(t)]=8\delta(u(t))-\tfrac{32}{3}\|u(t)\|^{3}_{L^{3}}.
\end{equation}
We claim that ${{F^{c}_{\infty}}}[u(t)]>0$ for all $t\geq 0$. Indeed, by the sharp Sobolev inequality \eqref{GI} and \eqref{PositiveP},
\begin{align}\nonumber
\|u\|^{4}_{L^{4}}&\leq  C^{4}_4\|\nabla u\|_{L^2}^4
= \tfrac{1}{\|\nabla W\|^{4}_{L^{2}}}\| \nabla u\|^{4}_{L^{4}}\\ \label{IGU}
&=\bigl[\tfrac{\|\nabla u\|_{L^2}}{\|\nabla W\|_{L^{2}}}\bigr]^{2}\|\nabla u\|^{2}_{L^{2}}
<\|\nabla u\|^{2}_{L^{2}}.
\end{align}

Suppose now that \eqref{InequeN} were false. Then there exists  $\left\{t_{n}\right\}$ such that
\begin{equation}\label{Contra22}
{{F^{c}_{\infty}}}[u(t_{n})]\leq \tfrac{1}{n}\delta(u(t_{n})).
\end{equation}
We first observe that $\left\{\delta(u(t_{n}))\right\}$ is bounded (cf. Lemma~\ref{GlobalS}), so that $F^c_\infty[u(t_n)]\to 0$. Thus \eqref{IGU} implies
\begin{align*}
\|\nabla u(t_n)\|_{L^2} \to \|\nabla W\|_{L^2},\qtq{i.e.}\delta(u(t_n))\to 0.
\end{align*}
Now, by Lemma~\ref{Modulation} (or Lemma~\ref{modula}) we have
\[
\tfrac{32}{3}\| u(t_n)\|^{3}_{L^{3}}\lesssim \delta(u(t_{n}))^{2}\leq  4\delta(u(t_{n}))\quad \text{for $n$ large}.
\]
Thus, by using \eqref{Pohi22} and \eqref{Contra22} we get
\[
4\delta(u(t_{n}))\leq \tfrac{1}{n}\delta(u(t_{n}))\quad \text{for $n$ large},
\]
contradicting the fact that $\delta(u(t))>0$ for $t>0$.\end{proof}

We now turn to the proof of Proposition~\ref{P7}.

\begin{proof}[Proof of Proposition~\ref{P7}]

We suppose towards a contradiction that $u\in\mathcal{B}$ is a solution to \eqref{NLS} satisfying $\sup_{t\in[0,\infty)}\delta(u(t))\leq \eta_*$. 

\bigskip

\textbf{Part 1.} We begin by proving an almost constancy property for the frequency scale and spatial center of $u$ (cf. Proposition~\ref{P2}).  By translation invariance, we may assume $x(0)=0$.  The proof is based on a bootstrap argument using the localized virial estimate incorporating modulation.

We wish to prove the following: 

\emph{On any interval $[0,T]$ such that}
\begin{equation}\label{bootstrap-assumption}
\max_{t\in[0, T]}\lambda(t)\leq2\min_{t\in[0, T]}\lambda(t)\qtq{and} \max_{t\in[0, T]}|{x(t)}|\leq 1,
\end{equation}
\emph{we have}
\begin{equation}
\max_{t\in[0, T]}\lambda(t)\leq\tfrac{3}{2}\min_{t\in[0, T]}\lambda(t),\quad  \max\limits_{t\in[0, T]}|{x(t)}|\leq \tfrac{1}{2}.
\end{equation}

\emph{Consequently,}
\begin{equation}\label{bootse1}
\max_{t\in[0, \infty)}\lambda(t)\leq\tfrac{3}{2}\min_{t\in[0, \infty)}\lambda(t),\quad  \max_{t\in[0, \infty)}\left|{x(t)}\right|\leq \tfrac{1}{2}.
\end{equation}

We turn to the proof. We suppose $[0,T]$ is an interval on which \eqref{bootstrap-assumption} holds. We introduce the notation
\[
\lambda_{\min}=\min_{t\in[0,T]}\lambda(t) \qtq{and} \lambda_{\max} = \max_{t\in[0,T]}\lambda(t). 
\]
We let $\varphi$ be a radial function satisfying
\begin{equation}\label{start1}
\varphi(x) = \begin{cases} |x|^2, \qquad & | x | \le 1, \\ 0, \qquad & | x | \ge 2.  
\end{cases} \qtq{with} \left|\partial^{\alpha}\varphi(x)\right|\lesssim_{\alpha} |x|^{2-|\alpha|} 
\end{equation}
for all multiindices $\alpha$. We also set
\[
\varphi_{R}(x)=R^2\varphi(\tfrac{x}{R}).
\]
We now define the Morawetz potential
\[
I_{R}[u]=2\Im \int_{\R^4} \nabla \varphi_R(x) \cdot \overline{u(t,x)}\nabla u(t,x)\, dx.
\]

We will first show that for any $R\geq 1$, we have
\begin{equation}\label{step1}
|M_R(u(t))| \lesssim R^2\delta(u(t)).
\end{equation}
We will then show that if we take $R$ to be
\[
R=\max_{t\in[0,T]} \tfrac{C}{\lambda(t)} = C\lambda_{\min}^{-1}
\]
for some large (universal) $C\gg1$, then we can obtain the lower bound
\begin{equation}\label{step2}
\tfrac{d}{dt}M_R(u(t)) \geq c\delta(u(t))
\end{equation}
for some constant $c$ that independent of $\eta_{\ast}$.

Combining \eqref{step1} and \eqref{step2} and applying the Fundamental Theorem of Calculus then yields
\[
\int_0^T \delta(u(t)) \,dt \lesssim \lambda_{\min}^{-2}[\delta(u(0))+\delta(u(T))]\lesssim \eta_*\lambda_{\min}^{-2} .
\]
Applying the bound on $|\tfrac{\lambda'(t)}{\lambda(t)}|$ in \eqref{mo2z} and \eqref{bootstrap-assumption}, this yields
\[
\int_0^T \bigl|\tfrac{d}{dt}\log \lambda(t)\bigr|\,dt \lesssim \eta_* \lambda_{\min}^{-2}\lambda_{\max}^{2} \lesssim \eta_*. 
\]
Choosing $\eta_*$ sufficiently small (depending only on universal constants) yields 
\[
\lambda_{\max}\leq\tfrac32\lambda_{\min}.
\]

Similarly, applying the upper bound on $|x'-\tfrac{\lambda'}{\lambda} x|$ and \eqref{bootstrap-assumption}, we obtain
\begin{align*}
\int_0^T |x'(t)|\,dt & \leq \int_0^T |x'(t)-\tfrac{\lambda'(t)}{\lambda(t)}x(t)|\,dt + \int_0^T |\tfrac{\lambda'(t)}{\lambda(t)}|\, |x(t)|\,dt  \\
& \lesssim \lambda_{\max}^2 \lambda_{\min}^{-2} \eta_* \lesssim \eta_*,
\end{align*}
which (choosing $\eta_*$ sufficiently small) implies 
\[
\max_{t\in[0,T]}|x(t)| \leq \tfrac12.
\]

Thus it remains to prove \eqref{step1} and \eqref{step2}.  In what follows we use the notation
\[
\tilde W(t):=\lambda(t) e^{-i\theta(t)}W(\lambda(t)x+x(t)).
\]

We first prove \eqref{step1}.  First, recalling that $W$ is real-valued, we note that
\[
I_{R}[\tilde W(t)]\equiv 0.
\]
Using H\"older's inequality and Sobolev embedding, and the fact that
\[
\| u\|_{L_t^\infty \dot H_x^1} \leq \|W\|_{\dot H_x^1},
\]
 we estimate
\begin{align*}
|I_{R}[u(t)]| & = |I_{R}[u(t)] - I_{R}[\tilde W(t)]| \\
& \lesssim R\int_{|x|\lesssim R} [|\nabla u| |u| - |\nabla \tilde W| |\tilde W|]\,dx \\
& \lesssim R^2 \{\|W\|_{\dot H^1}+\| u\|_{L_t^\infty \dot H_x^1}\}\{\|u-\tilde W\|_{L_t^\infty \dot H_x^1} + \|u-\tilde W\|_{L_t^\infty L_x^4}\} \\
& \lesssim R^2 \|u-\tilde W\|_{L_t^\infty \dot H_x^1}\lesssim R^2 \delta(u(t)). 
\end{align*}

We turn to the proof of \eqref{step2}. By Lemma \ref{VirialIden} and Lemma \ref{Compa},
\begin{align*}
\tfrac{d}{dt} I_{R}[u] & = \int_{\R^4}\big\{(- \Delta \Delta w_{R})|u|^{2}-
				\Delta[w_{R}]|u|^{4}+4\Re \overline{u_{j}} u_{k} \partial_{jk}[w_{R}]\big\}\,dx\\
				&\quad +\tfrac{2}{3}\int_{\R^4}\Delta[w_{R}]|u|^{3}\,dx\\
& =F^{c}_{R}[u(t)]+\tfrac{2}{3}\int_{\R^4}\Delta[w_{R}]|u|^{3}\,dx\\
& =F^{c}_{R}[u(t)]-F_{R}^c(\tilde{W})+\tfrac{2}{3}\int_{\R^4}\Delta[w_{R}]|u|^{3}\,dx\\
& =F^{c}_{\infty}[u(t)] +\{(F_{R}^c[u(t)]-F^{c}_{\infty}[u(t)])-(F_{R}^c[\tilde{W}(t)]-F^{c}_{\infty}[\tilde{W}(t))]\}\\
&\quad +\tfrac{2}{3}\int_{\R^4}\Delta[w_{R}]|u|^{3}\,dx\\
& \geq c_0\delta(u(t))+\mathcal{G}_{R}[u(t)]-\mathcal{G}_{R}[\tilde W(t)] +\tfrac{2}{3}\int_{\R^4}\Delta[w_{R}]|u|^{3}\,dx,
\end{align*}
where 
\begin{align*}
\mathcal{G}_{R}[u] & := -8\int_{|x|\geq R}|\nabla u|^2+8\int_{|x|\geq R}|u|^4\,dx \\
& \quad +\int_{R\leq|x|\leq 2R} [-\Delta\Delta\varphi_R]|u|^2 \,dx - \int_{|x|\geq R} \Delta[\varphi_R]|u|^4 \, dx\\
& \quad + 4\Re\int_{|x|\geq R} \bar u_j u_k \partial_{jk}[\varphi_R]\,dx.
\end{align*}

We now wish to show that each term appearing in this difference can be controlled by a small multiple of $\delta(t)$, provided $R$ is chosen appropriately.  

Let $\eta>0$.  We begin by changing variables to obtain
\begin{align*}
\biggl|\int_{|x|\geq R} & |\nabla u(t)|^2 - |\nabla \tilde W(t)|^2 \,dx\biggr| \\
& \lesssim\bigl\{ \|\nabla u(t)\|_{L^2(|x|\geq R)} + \|\nabla W\|_{L^2(|x-x(t)|\geq R\lambda(t)} \bigr\} \|\nabla v(t)\|_{L^2}.
\end{align*}
We now note that 
\[
\{|x-x(t)|>R\lambda(t)\} \subset \{|x|\geq \tfrac12 C\}.  
\]
Indeed, this follows from the fact that $|x(t)|\leq 1$ and the definition of $R$. Thus we can choose $C$ sufficiently large to obtain
\[
\|\nabla W\|_{L^2(|x-x(t)|\geq R\lambda(t))}\leq \eta. 
\]
On the other hand, recalling \eqref{decompzz} and changing variables,
\[
\|\nabla u\|_{L^2(|x|\geq R)} \leq \|\nabla W\|_{L^2(|x-x(t)|\geq R\lambda(t))}+\|\nabla v(t)\|_{L_x^2} \lesssim \eta+\delta(u(t)).
\]
Thus 
\[
\biggl|\int_{|x|\geq R} |\nabla u(t)|^2 - |\nabla \tilde W(t)|^2 \,dx\biggr| \lesssim [\eta+\eta_*] \delta(u(t)). 
\]

The remaining terms may be treated in a similar manner. As an example, let us demonstrate how to estimate one additional term.  Using H\"older's inequality and Sobolev embedding, 
\begin{align*}
\biggl| \int_{|x|\geq R} & |u|^4 \,dx - \int_{|x|\geq R} |\tilde W|^4\,dx \biggr| \\
 \lesssim& \bigl\{ \|u\|_{L_x^4(|x|\geq R)}^3 + \|W\|_{L_x^4(|x-x(t)|\geq R\lambda(t))}^3\} \|v(t)\|_{L_x^4} \\
 \lesssim &[\eta^3+\eta_*^3]\|v(t)\|_{\dot H^1} \lesssim [\eta^3+\eta_*^3]\delta(u(t)). 
\end{align*}
Estimating similarly for the remaining terms and using properties of $\varphi$, we obtain
\[
|\mathcal{G}_{R}[u(t)]-\mathcal{G}_{R}[\tilde W(t)]| \lesssim [\eta+\eta_*]\delta(u(t)).
\]

Finally, using $|\Delta[w_{R}]|\lesssim1$ and \eqref{mo3z}, we estimate
\[
\left|\int_{\R^4}\Delta[w_{R}]|u|^{3}\, dx\right|\lesssim \delta(u(t))^2\lesssim \eta_{\ast}\delta(u(t)).
\]

Taking $\eta>0$ and $\eta_{\ast}>0$ sufficiently small, we complete the proof of \eqref{step2} and hence complete the bootstrap argument.

\bigskip

\textbf{Part 2.} We are now in a position to derive a contradiction and complete the proof of Proposition~\ref{P7}.  Recalling that $x(0)=0$ and using \eqref{bootse1}, we have
 \begin{equation*}
 {\lambda}(t)\sim {\lambda}(0)\qtq{and} |x(t)|\leq 1, \qtq{for all}t>0.
 \end{equation*}
Now, by Proposition~\ref{P5}, there exists $t_n\to\infty$ so that $\delta(u(t_n))\to 0$. Thus by \eqref{mo4z},
\begin{equation*}
\frac{\lambda(t_n)^{3/2}}{(1+\lambda(t_n)^2)^{3/2}}\lesssim \delta(u(t_n))\to 0,
\end{equation*}
contradicting $\lambda(t_n)\sim\lambda(0)$.\end{proof}

\appendix

\section{Modulation analysis}\label{S:Modulation}

In this section we assume that $u:I\times\R^4\to\C$ is a solution to \eqref{NLS} satisfying the energy constraint \eqref{Thres}.  We recall the definition of $\delta$ given in \eqref{deltau}.  By Lemma~\ref{GlobalS}, we have that $\delta(u(t))>0$ for all $t\in I$. 

For $(\theta_0, \mu_0, x_0) \in \mathbb{R} \times (0,\infty) \times \mathbb{R}^4$,  we use the notation
\begin{equation}\label{mod-notation}
f_{[\theta_{0}, \mu_{0}, x_{0}]}(x) := e^{i\theta_0} \mu_0^{-1} f(\mu_0^{-1}(x - x_{0})).
\end{equation}
For $\delta_0 > 0$ small (to be determined below), we define
\[
I_0 = \{t \in I : \delta(u(t)) < \delta_0\}.
\]
\begin{lemma}\label{modula} 
There exists $\delta_0>0$ sufficiently small such that if $u$ satisfies $\delta(u(t))<\delta_0$ on the interval $I_0$, then there exist continuous functions
\begin{equation*}
\theta(t) : I_0\rightarrow \R,\quad x(t): I_0\rightarrow \mathbb{R}^4,\quad \mu(t): I_0\rightarrow \mathbb{R}_+,\quad \alpha(t): I_0\rightarrow \mathbb{R}_+
\end{equation*}
such that if 
\begin{equation}\label{decompz}
v(t):=\alpha(t)W+h(t)=u_{[\theta(t),\mu(t),x(t)]}(t)-W,
\end{equation}
then
\begin{equation}\label{mo1}
|\alpha(t)|\sim \|v(t)\|_{\dot{H}^1}\sim \|h(t)\|_{\dot{H}^1}\sim \delta(u(t)), 
\end{equation}
\begin{equation}\label{mo2}
\|u(t)\|_{L_x^{3}}^{3}\lesssim \delta(u(t))^2,
\end{equation}
\begin{equation}\label{mo3}
\bigl|x'(t) - \tfrac{\mu'(t)}{\mu(t)}x(t)\bigr| +|\alpha'(t)|+|\theta'(t)|+\bigl|\tfrac{\mu'(t)}{\mu(t)}\bigr|\lesssim \mu(t)^2 \delta(u(t)),
\end{equation}
\begin{equation}\label{mo4}
\frac{\mu(t)^{\frac{3}{2}}}{(1+\mu(t)^2)^{\frac32}}\lesssim \delta(u(t)),
\end{equation}
with implicit constants independent of $u$.
\end{lemma}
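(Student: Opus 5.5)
The plan is to follow the standard Duyckaerts--Merle modulation analysis for the energy-critical problem, treating the quadratic (mass-critical) term as a perturbation controlled by $\delta$.

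\textbf{Step 1: coercivity for the cubic energy.} Write $\delta_c(f):=|E^c(f)-E^c(W)|+|\|\nabla f\|_{L^2}^2-\|\nabla W\|_{L^2}^2|$. Since $E(u)=E^c(W)$, we have $E^c(u)=E^c(W)-\tfrac13\|u\|_{L^3}^3$. Combining this with \eqref{IneW} gives $0\le E^c(W)-E^c(u)\le \tfrac14\delta(u)$, so $\delta_c(u(t))\lesssim\delta(u(t))$. The variational characterization of $W$, together with the compactness argument of \cite{DuyckaMerle2009}, then gives, for $\delta_0$ small, parameters $(\theta,\mu,x)$ with $\|u_{[\theta,\mu,x]}-W\|_{\dot H^1}$ small. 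We fix these parameters by the orthogonality conditions
\[
h\perp_{\dot H^1} \{iW,\ W_1:=\tfrac{d}{d\lambda}W_\lambda|_{\lambda=1},\ \partial_jW\},\qquad h\perp_{\dot H^1} W,
\]
so that $\alpha$ is defined as the $W$-component. The implicit function theorem gives continuity in $t$, and the $\dot H^1$ continuity of the flow gives continuity of the parameters.

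\textbf{Step 2: the estimate \eqref{mo1}.} We expand $E^c(W+v)=E^c(W)+\tfrac12 Q(v)+O(\|v\|^3)$, where $Q$ is the linearized quadratic form, which is coercive on $h$ under the orthogonality conditions. We also have
\[
\|\nabla(W+v)\|_2^2-\|\nabla W\|_2^2=2\alpha\|\nabla W\|^2+O(\|v\|^2).
\]
The second expansion gives $|\alpha|\sim\delta$ up to $O(\|v\|^2)$. The first expansion, together with $Q(W)<0$, coercivity, and $\delta_c\lesssim\delta$, gives $\|h\|_{\dot H^1}^2\lesssim |\alpha|^2+\delta$. Bootstrapping these two relations yields $\|h\|\lesssim|\alpha|\sim\delta$.

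\textbf{Step 3: the $L^3$ estimate \eqref{mo2} and the scale bound \eqref{mo4}.} These are where the double criticality matters. Scaling gives
\[
\|u\|_{L^3}^3=\mu^{-3}\cdot\mu^{4}\cdots,
\]
so I would compute directly: $u=\mu\,e^{-i\theta}(W+v)(\mu\cdot+\dots)$, hence $\|u\|_{L^3}^3=\mu^{-1}\|W+v\|_{L^3}^3$. But $W\notin L^3(\R^4)$, since $W\sim|x|^{-2}$, so this norm diverges. This is the main obstacle: $u\in L^2$ while $W\notin L^2$. I would resolve it by using the energy identity instead, namely
\[
\tfrac13\|u\|_3^3=E^c(W)-E^c(u)\le\tfrac14\delta,
\]
and improving it via the second-order expansion. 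Since $E^c(W+v)-E^c(W)=O(\|v\|^2)$ (the linear term vanishes because $W$ is critical), we get $\|u\|_3^3\lesssim\|v\|_{\dot H^1}^2\sim\delta^2$, which is \eqref{mo2}. For \eqref{mo4}, I would then obtain a lower bound on $\|u\|_{L^3}^3$ in terms of $\mu$. The main contribution is $\int_{|y|\lesssim \mu^{-1}\cdot(\text{scale})}|W|^3$ in rescaled variables, where $v$ is small in $\dot H^1\subset L^4$ and hence only slightly perturbs $W$ on bounded regions. A truncated $L^3$ integral of $W$ over a ball of radius $r$ grows like $r$, so after scaling, $\|u\|_3^3\gtrsim\mu^{3}/(1+\mu^2)^{3/2}$ in the regime where the ball is meaningful. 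Combining this with \eqref{mo2} and taking a square root gives \eqref{mo4}, possibly after adjusting exponents. I expect the bookkeeping here, and possibly the use of the conserved mass to handle the large-$\mu$ regime through $\|u\|_2$ bounds, to be the delicate point.

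\textbf{Step 4: the ODE bounds \eqref{mo3}.} I would differentiate the orthogonality conditions in time, using the equation satisfied by $v$ in rescaled time (the $\mu^2$ factor comes from the rescaling). The modulation equations have the form of an invertible Gram matrix applied to $(\theta',\mu'/\mu,\ x'-\tfrac{\mu'}{\mu}x,\ \alpha')$, which equals $\mu^2$ times terms linear in $v$, plus nonlinear terms, plus the quadratic-nonlinearity contribution. The $|u|u$ term, paired with fixed decaying functions ($\Delta W$, $\Delta W_1$, and so on), is bounded via $\|u\|_{L^3}^2\lesssim\delta^{4/3}\lesssim\delta$ after scaling. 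This yields \eqref{mo3}.
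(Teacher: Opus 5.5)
Your overall architecture is the paper's: closeness to the orbit, orthogonality via the implicit function theorem, coercivity of $\mathcal{F}$ on $G^\perp$, an $L^3$ lower bound to control $\mu$, and projected modulation equations. However, Step 2 does not close. From $\delta_c\lesssim\delta$ you only retain the first-order information $|E^c(u)-E^c(W)|\lesssim\delta$, which gives $\|h\|_{\dot H^1}^2\lesssim\alpha^2+\delta$. Combined with the exact relation $\delta=-2\alpha\|W\|_{\dot H^1}^2-\alpha^2\|W\|_{\dot H^1}^2-\|h\|_{\dot H^1}^2$, this yields $|\alpha|\sim\delta$ but only $\|h\|_{\dot H^1}\lesssim\delta^{1/2}$. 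The configuration $\|h\|_{\dot H^1}^2\sim\delta$ is consistent with both relations, so no bootstrap between them improves it. Consequently your (correct) bound $\|u\|_{L^3}^3\lesssim\|v\|_{\dot H^1}^2$ in Step 3 only gives $\|u\|_{L^3}^3\lesssim\delta$, and neither \eqref{mo1} nor \eqref{mo2} is established.

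The missing idea is to keep the sign. Insert the exact identity $E^c(W+v)-E^c(W)=E^c(u)-E^c(W)=-\tfrac13\|u\|_{L^3}^3$ into the second-order expansion instead of bounding it in absolute value. Since $L_RW=-2W^3=2\Delta W$ and $h\perp_{\dot H^1}W$, the cross term vanishes and
\[
\mathcal{F}(h)+\tfrac13\|u\|_{L^3}^3=-\alpha^2\mathcal{F}(W)+o(\|v\|_{\dot H^1}^2),\qquad -\mathcal{F}(W)=\|W\|_{L^4}^4 .
\]
Both terms on the left are nonnegative, so coercivity gives $\|h\|_{\dot H^1}^2+\|u\|_{L^3}^3\lesssim\alpha^2$ at once. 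This is exactly the paper's Lemma~\ref{Lce1}.

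Step 3 also rests on a false premise: $W\in L^3(\R^4)$, because $|W|^3\sim|x|^{-6}$ is integrable at infinity in four dimensions; only $L^2$ fails. So truncated $L^3$ integrals of $W$ stay bounded rather than growing like $r$, and your proposed lower bound $\mu^3/(1+\mu^2)^{3/2}$ does not give \eqref{mo4} after a square root. With this corrected, your own scaling identity finishes directly. For $\|v\|_{\dot H^1}\ll1$,
\[
\|u\|_{L^3}^3=\mu^{-1}\|W+v\|_{L^3}^3\ge\mu^{-1}\bigl(\|W\|_{L^3(B_1)}-C\|v\|_{L^4}\bigr)^3\gtrsim\mu^{-1},
\]
so \eqref{mo2} gives $\mu^{-1}\lesssim\delta^2$. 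This implies \eqref{mo4}, since $\mu^{3/2}(1+\mu^2)^{-3/2}\le\mu^{-3/2}\lesssim\delta^3$. The paper instead uses the pointwise bound $\mu W(\mu x+x(t))\gtrsim\mu/(1+\mu^2)$ on a unit ball in the original variables. That yields the weaker bound \eqref{mo4}, which by itself does not exclude $\mu\to0$; the paper handles this separately via mass in Remark~\ref{Rem1}.

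Your Step 4 silently needs this lower bound on $\mu$. Pairing $|u|u$ in $L^{3/2}$ against $\Delta$ of the rescaled $W$ in $L^3$ leaves a factor $\mu^{-1/3}$ after the $\mu^{-2}$ time change. The paper instead uses $\||u|u\|_{L^{4/3}}\le\|u\|_{L^3}^{3/2}\|u\|_{L^2}^{1/2}$ against $\|\Delta\phi\|_{L^4}$, which is scale-neutral at the cost of a dependence on the conserved mass. Once repaired with $\mu\gtrsim\delta^{-2}$, your route avoids that mass dependence.
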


We begin with the following result, which follows from the arguments used to prove \cite[Lemma 3.2]{AJZ}.		
\begin{lemma}\label{aux1L}
For any $\varepsilon > 0$, there exists $\delta_0 = \delta_0(\varepsilon) > 0$ small enough that if $\delta(u(t)) < \delta_0$, then there exists $(\theta_0(t), \mu_0(t), x_0(t)) \in \mathbb{R} \times (0,\infty) \times \mathbb{R}^4$ so that
\begin{align}\label{asc}
\left\| u_{[\theta_0(t),\mu_0(t),x_0(t)]}(t, \cdot) - W \right\|_{\dot{H}^1} < \varepsilon. 
\end{align}
\end{lemma}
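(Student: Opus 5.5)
The plan is to argue by contradiction and use the variational characterization of $W$ through the sharp Sobolev inequality \eqref{GI}. Suppose the lemma fails for some $\varepsilon>0$. Then there are times $t_n$ (possibly coming from different solutions satisfying \eqref{Thres}, which is harmless since only the constraint is used) with $\delta(u(t_n))\to 0$, but
\[
\inf_{\theta,\mu,x}\bigl\|u_{[\theta,\mu,x]}(t_n)-W\bigr\|_{\dot H^1}\geq\varepsilon .
\]
Write $f_n=u(t_n)$. From $\delta(f_n)\to0$ we get $\|\nabla f_n\|_{L^2}^2\to\|\nabla W\|_{L^2}^2$. Next use the energy identity $E(f_n)=E^c(W)=\tfrac14\|\nabla W\|_{L^2}^2$, together with \eqref{PoQ}. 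It gives
\[
\tfrac14\|f_n\|_{L^4}^4-\tfrac13\|f_n\|_{L^3}^3=\tfrac12\|\nabla f_n\|_{L^2}^2-\tfrac14\|\nabla W\|_{L^2}^2\to\tfrac14\|W\|_{L^4}^4 .
\]

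The key intermediate step is to show $\|f_n\|_{L^3}\to0$ and $\|f_n\|_{L^4}\to\|W\|_{L^4}$. By sharp Sobolev we have $\limsup\|f_n\|_{L^4}^4\leq C_{GN}^4\|\nabla W\|_{L^2}^4=\|W\|_{L^4}^4$. The identity above then forces $\tfrac13\|f_n\|_{L^3}^3\leq\tfrac14\|f_n\|_{L^4}^4-\tfrac14\|W\|_{L^4}^4+o(1)\leq o(1)$. Hence both claims hold, and $(f_n)$ is an optimizing sequence for \eqref{GI}: $\|f_n\|_{L^4}/\|\nabla f_n\|_{L^2}\to C_{GN}$. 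This is exactly where the threshold $E=E^c(W)$ and the defocusing sign of the $|u|^3$ term enter.

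Now invoke compactness of optimizing sequences for the sharp Sobolev inequality in $\dot H^1(\R^4)$ (Lions' concentration compactness, or the profile decomposition for bounded $\dot H^1$ sequences). Modulo scaling and translation, a subsequence $\mu_n^{-1}f_n(\mu_n^{-1}(\cdot-x_n))$ converges strongly in $\dot H^1$ to an extremizer $g$ with $\|\nabla g\|_{L^2}=\|\nabla W\|_{L^2}$. I would justify this via the profile decomposition (the $\dot H^1$ part of Theorem~\ref{Profi}, or the standard Keraani decomposition). By decoupling of $\|\nabla\cdot\|^2$ and asymptotic decoupling of $\|\cdot\|_{L^4}^4$, the strict inequality $\sum a_j^2<(\sum a_j)^2$ unless one term carries everything forces a single profile with vanishing remainder. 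A time-translation parameter $t_n\to\pm\infty$ is excluded because it would give vanishing $L^4$ norm.

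By the variational characterization stated after \eqref{C_GN}, together with the norm normalization, $g=e^{i\theta}\lambda W(\lambda(\cdot-y))$ with fixed $\lambda,y,\theta$. Absorbing these into the modulation parameters gives $\|u_{[\theta_n,\mu_n,x_n]}(t_n)-W\|_{\dot H^1}\to0$, which contradicts the choice of $t_n$. This yields $\delta_0(\varepsilon)$. I expect the main obstacle to be the compactness step, namely ruling out dichotomy for the optimizing sequence. It is routine via the profile decomposition, but it is the only genuinely nontrivial input, since the rest is the algebra from \eqref{PoQ}.
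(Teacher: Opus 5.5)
Your proposal is correct and follows essentially the route the paper takes by citing \cite[Lemma 3.2]{AJZ}: from $E(u)=E^c(W)$, $\delta(u(t_n))\to 0$, the Pohozaev identity and the defocusing sign of the $|u|^3$ term, you show $\|u(t_n)\|_{L^3}\to 0$ and that $u(t_n)$ is a maximizing sequence for the sharp Sobolev inequality, and you conclude by compactness of such sequences modulo scaling and translation together with the characterization of extremizers. Your observation that only the constraint is used (so $\delta_0$ is uniform in $u$) is apt, as is your care in fixing the amplitude of the limiting extremizer through the $\dot H^1$ normalization, which the paper's statement of the characterization omits.
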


With Lemma~\ref{aux1L} in hand, we can next follow the arguments used to prove \cite[Lemma 5.1]{CamposFarahRoudenko} to obtain the following.
	
\begin{lemma}\label{Lemmaes}
If $\delta_0 > 0$ is sufficiently small, then there exist $C^1$ functions $\theta(t) : I_0 \to \mathbb{R}$, $\mu(t) : I_0 \to (0,\infty)$ and $x(t): I_0 \to \mathbb{R}^{4}$ such that
\[
\|u_{[\theta(t),\mu(t),x(t)]}(t) - W\|_{\dot{H}^1} \ll 1.
\]
Moreover, if we define $g(t) = u_{[\theta(t),\mu(t),x(t)]}(t)$, then we have
\[
g(t) \perp \operatorname{span}\{\nabla W, iW, \Lambda W\}\qtq{for all $t\in I_{0}$,}
\]
where  $\Lambda W=W + x \cdot \nabla W$.
\end{lemma}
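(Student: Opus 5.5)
The plan is to obtain the modulation parameters from the implicit function theorem, applied in a neighborhood of the orbit of $W$. Lemma~\ref{aux1L} supplies a crude choice of parameters, and the orthogonality conditions then single out a unique nearby choice. Throughout, the orthogonality is taken in the $\dot H^1$ inner product $(\cdot,\cdot)_{\dot H^1}$.

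\textbf{Step 1: the functional and the condition at $W$.} For $f\in\dot H^1$ and $(\theta,\mu,y)\in\R\times(0,\infty)\times\R^4$ I define
\[
\Phi(\theta,\mu,y;f):=\Bigl(\bigl(f_{[\theta,\mu,y]},iW\bigr)_{\dot H^1},\ \bigl(f_{[\theta,\mu,y]},\Lambda W\bigr)_{\dot H^1},\ \bigl(f_{[\theta,\mu,y]},\partial_jW\bigr)_{\dot H^1}\Bigr)_{j=1,\dots,4}\in\R^6.
\]
The map $f\mapsto f_{[\theta,\mu,y]}$ is an $\dot H^1$-isometry. Moving it onto the fixed smooth, decaying functions $iW,\Lambda W,\partial_jW$ shows that $\Phi$ is $C^1$ in $(\theta,\mu,y)$, and linear and bounded in $f$.

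At $f=W$ and parameters $(0,1,0)$ we have $\Phi=0$, for three reasons:
\begin{itemize}
\item $(W,iW)_{\dot H^1}=0$ because $W$ is real.
\item $(W,\partial_jW)_{\dot H^1}=0$ by parity.
\item $(W,\Lambda W)_{\dot H^1}=\tfrac12\tfrac{d}{d\mu}\|W_{[0,\mu,0]}\|_{\dot H^1}^2\big|_{\mu=1}=0$, since the $\dot H^1$ norm is scale invariant.
\end{itemize}

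\textbf{Step 2: nondegeneracy and uniform size.} The parameter derivatives of $W_{[\theta,\mu,y]}$ at $(0,1,0)$ are $iW$, $-\Lambda W$ and $-\partial_jW$. These six vectors are mutually $\dot H^1$-orthogonal, by the real/imaginary splitting and by parity (even versus odd functions). The Jacobian of $\Phi$ in $(\theta,\mu,y)$ at this point is therefore diagonal with nonzero entries $\|W\|_{\dot H^1}^2$, $-\|\Lambda W\|_{\dot H^1}^2$ and $-\|\partial_jW\|_{\dot H^1}^2$, hence invertible.

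The implicit function theorem then gives $\varepsilon_1,r>0$ with the following property. For $\|f-W\|_{\dot H^1}<\varepsilon_1$ there is a unique $(\theta,\mu,y)$ within distance $r$ of $(0,1,0)$ with $\Phi(\theta,\mu,y;f)=0$. Moreover $\|f_{[\theta,\mu,y]}-W\|_{\dot H^1}\lesssim\|f-W\|_{\dot H^1}$, and the parameters depend $C^1$ on $f$.

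\textbf{Step 3: applying it to $u(t)$.} I take $\varepsilon<\varepsilon_1$ in Lemma~\ref{aux1L} and choose $\delta_0=\delta_0(\varepsilon)$. For $t\in I_0$, I apply Step~2 to $f=u_{[\theta_0(t),\mu_0(t),x_0(t)]}(t)$ and compose the resulting parameters with $(\theta_0,\mu_0,x_0)$ using the group law of the symmetries. The result is a triple $(\theta(t),\mu(t),x(t))$ such that $g(t)$ satisfies the orthogonality conditions and $\|g(t)-W\|_{\dot H^1}\ll1$.

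The crude parameters $(\theta_0,\mu_0,x_0)$ need not be continuous in $t$. The composed parameters are nevertheless well defined, because of the local uniqueness in Step~2: two admissible choices that both put $u(t)$ within $\varepsilon_1$ of $W$ differ by a symmetry close to the identity, and uniqueness forces them to agree modulo $2\pi$ in $\theta$. Continuity in $t$ then follows from $u\in C(I,\dot H^1)$.

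\textbf{Step 4: $C^1$ in time (main obstacle).} A solution $u$ with $u\in C(I,H^1)$ is not differentiable in $\dot H^1$, so the $C^1$ regularity in $t$ is the delicate step. Write $\Psi(t;\theta,\mu,y)=\Phi(\theta,\mu,y;u(t))$. After moving the symmetry onto the test functions, each component has the form $\Re\int u(t)\,\overline{(-\Delta)\,T_{\theta,\mu,y}\psi}$, where $\psi\in\{iW,\Lambda W,\partial_jW\}$ and $T_{\theta,\mu,y}$ denotes the inverse symmetry. The functions $\Delta T_{\theta,\mu,y}\psi$ lie in $H^1$.

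Using the equation, $\partial_tu=i\Delta u-i|u|^2u+i|u|u$ belongs to $C(I,H^{-1})$. Hence $\Psi$ is $C^1$ jointly in $t$ and in the parameters. The nondegeneracy of Step~2 persists uniformly near the orbit, so the finite-dimensional implicit function theorem applied to $\Psi(t;\cdot)=0$ yields that the parameters are $C^1$ in $t$. If the decay of $\Delta\Lambda W$ turns out to be insufficient for the $H^{-1}$ pairing, the fallback is to argue first for smooth, well-localized data and pass to the limit using the stability theory (Lemma~\ref{stabi}).
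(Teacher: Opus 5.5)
Your proposal is correct and follows essentially the paper's route. The paper gets this lemma by running the implicit-function-theorem modulation argument of Campos--Farah--Roudenko on top of Lemma~\ref{aux1L}, which is what you carry out: a nondegenerate diagonal Jacobian at $W$, local uniqueness to make the composed parameters well defined and continuous, and $u\in C^1(I,H^{-1})$ for the $C^1$ dependence in time. The fallback at the end of Step~4 is not needed, since $\Delta\Lambda W=3\Delta W+x\cdot\nabla\Delta W=O(|x|^{-6})$, with gradient $O(|x|^{-7})$, already lies in $H^1(\R^4)$.
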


\begin{remark}\label{Rem1}
Let $R \geq 1$. We claim that $\delta_0$ sufficiently small (depending on $u$), $\mu(t) \geq R$ for all $t \in I_0$. If not, there exist $t_n$ with $\delta(u(t_n)) \to 0$ and $\mu(t_n) \leq R$. Then we can define
\[
f_n(x) := u_{[\theta(t_n), \mu(t_n), x(t_n)]}(t_n, x).
\]
By \eqref{asc}, $f_n \to W$ in $\dot{H}^1$. On the other hand, mass conservation gives 
\[
\|f_n\|_{L^2} = \mu(t_n) M(u_0)^{1/2} \leq R M(u_0)^{1/2}.
\]
Thus $f_n$ also converges weakly in $L^2$ along a subsequence, contradicting that $W\notin L^2$.
\end{remark}

We now introduce the operators
\begin{align*}
L_R = -\Delta - 3W^2\qtq{and} L_I = -\Delta - W^2.
\end{align*}
	
Associated with these operators we define the quadratic form\footnote{We will write complex-valued functions in the form $f=f_1+if_2$, where $f_1$ is the real part and $f_2$ the imaginary part.}
\begin{align}
\mathcal{F}(f) &= \tfrac{1}{2}(L_R f_1, f_1) + \tfrac{1}{2}(L_I f_2, f_2) \\
&= \tfrac{1}{2} \int |\nabla f|^2  \, dx - \tfrac{1}{2} \int W^2(3|f_1|^2 + |f_2|^2)  \, dx.
\end{align}

Denote by $G := \text{span}\{W, \nabla W, iW, \Lambda W\}$ and by $G^\perp$ its orthogonal complement in $\dot{H}^1$ (viewed as a subspace of $\dot{H}^1$). Then \cite[Lemma 3.5]{CamposFarahRoudenko} provides the following lemma:

\begin{lemma}\label{CoecTy}
There exists $C > 0$ such that for  $h \in G^\perp$,
\[
\mathcal{F}(h) \geq C\|h\|_{\dot{H}^1}^2.
\]
\end{lemma}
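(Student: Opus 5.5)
The approach is to recast $\mathcal{F}$ as a compact perturbation of the identity on $\dot H^1$ and then use spectral theory. Write $h=h_1+ih_2$. The orthogonality conditions defining $G^\perp$ split into two groups. The real part $h_1$ is $\dot H^1$-orthogonal to $W$, $\nabla W$ and $\Lambda W$. The imaginary part $h_2$ is $\dot H^1$-orthogonal to $W$ (from $iW$). Since $\mathcal{F}(h)=\tfrac12(L_Rh_1,h_1)+\tfrac12(L_Ih_2,h_2)$, it suffices to prove separately
\[
(L_Rh_1,h_1)\gtrsim\|h_1\|_{\dot H^1}^2 \qtq{and} (L_Ih_2,h_2)\gtrsim \|h_2\|_{\dot H^1}^2 .
\]

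\textbf{Compact-operator reformulation.} Define on $\dot H^1$ (real-valued functions) the operators $K_R=(-\Delta)^{-1}(3W^2\,\cdot)$ and $K_I=(-\Delta)^{-1}(W^2\,\cdot)$. Then
\[
(L_Rf,f)=\|f\|_{\dot H^1}^2-(K_Rf,f)_{\dot H^1},\qquad (L_If,f)=\|f\|_{\dot H^1}^2-(K_If,f)_{\dot H^1}.
\]
Both $K_R$ and $K_I$ are self-adjoint on $\dot H^1$. They are compact, because $W^2\lesssim\langle x\rangle^{-4}$ decays, so multiplication by $W^2$ maps $\dot H^1\subset L^4$ compactly into $\dot H^{-1}$ (cut off at large $|x|$, where the weight is small, and use Rellich locally). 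Coercivity on a closed subspace $V$ then reduces to showing that $\sup\{(Kf,f)_{\dot H^1}:f\in V,\ \|f\|_{\dot H^1}=1\}<1$.

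\textbf{Spectral facts.} For $K_I$: since $-\Delta W=W^3$, we have $K_IW=W$. Moreover $L_I\ge0$, by the ground-state factorization $f=Wg$, which gives $(L_If,f)=\int W^2|\nabla g|^2\,dx$ after justifying decay. Hence $1$ is the top eigenvalue of $K_I$. It is simple, since equality in the factorization forces $g$ to be constant. By compactness, the remaining spectrum lies in $[-\,\cdot\,,\mu_I]$ for some $\mu_I<1$, which gives coercivity on $\{W\}^\perp$.

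For $K_R$: $K_RW=3W$, so $W$ is the negative direction of $L_R$. Next, the function $f\mapsto\|\nabla f\|_2^2/\|f\|_4^2$ is minimized at $W$ by the sharp Sobolev inequality \eqref{GI}. Its second variation at $W$ gives $(L_Rf,f)\ge0$ for every $f$ with $\int fW^3=0$, that is, for $f\perp_{\dot H^1}W$. So $K_R$ has exactly one eigenvalue above $1$, and its eigenvalue-$1$ eigenspace equals $\ker L_R$ in $\dot H^1$. Nondegeneracy of $W$ (Rey; Bianchi--Egnell) identifies this kernel as $\ker L_R=\operatorname{span}\{\partial_jW,\Lambda W\}$. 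Compactness then yields $\mu_R<1$ with $(K_Rf,f)_{\dot H^1}\le\mu_R\|f\|_{\dot H^1}^2$ for $f\perp\{W,\nabla W,\Lambda W\}$. Since $W$, $\nabla W$ and $\Lambda W$ are eigenvectors of $K_R$ and hence mutually $\dot H^1$-orthogonal, the orthogonal complement is $K_R$-invariant and the min-max argument applies directly.

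\textbf{Main obstacle.} I expect the hardest input to be the nondegeneracy statement $\ker L_R=\operatorname{span}\{\partial_jW,\Lambda W\}$ in $\dot H^1(\R^4)$. I would not reprove it; I would cite it, as it is classical and is exactly what \cite{CamposFarahRoudenko} uses. A secondary technical point is the rigorous second-variation computation and the justification of the factorization for $L_I$ on $\dot H^1$ rather than on smooth compactly supported functions. I would handle both by density, using the explicit decay $W\sim|x|^{-2}$ and $\nabla W\sim|x|^{-3}$ together with Hardy's inequality. Combining the two coercive bounds gives $\mathcal{F}(h)\ge C\|h\|_{\dot H^1}^2$ with $C=\tfrac12\min(1-\mu_R,\,1-\mu_I)$.
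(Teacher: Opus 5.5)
Your proof is correct. The paper gives no argument of its own here; it simply imports the statement from \cite[Lemma 3.5]{CamposFarahRoudenko}. What you have written is essentially the standard proof behind such coercivity results (cf.\ \cite{DuyckaMerle2009}), with the same ingredients:
\begin{itemize}
\item $L_I\geq 0$ with kernel $\operatorname{span}\{W\}$;
\item a single negative direction $W$ for $L_R$, coming from the sharp Sobolev inequality;
\item the Rey/Bianchi--Egnell nondegeneracy $\ker L_R=\operatorname{span}\{\partial_jW,\Lambda W\}$;
\item compactness of multiplication by $W^2$ from $\dot H^1$ to $\dot H^{-1}$.
\end{itemize}

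One minor imprecision: $\partial_jW$ and $\Lambda W$ share the eigenvalue $1$ of $K_R$, so their mutual orthogonality does not follow from their being eigenvectors (it holds by parity instead). You do not need it anyway, since the orthogonal complement of any $K_R$-invariant span is automatically $K_R$-invariant.
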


We now write
\begin{equation}\label{Decop1}
u_{[\theta(t),\mu(t),x(t)]}(t) = (1 + \alpha(t))W + h(t), 
\end{equation}
where
\[
\alpha(t) = \tfrac{1}{\|W\|_{\dot{H}^1}} (u_{[\theta(t),\mu(t),x(t)]}, W)_{\dot{H}^1} - 1.
\]
In particular, $\alpha(t)$ is chosen so that $h(t) \in G^\perp$.

Using the decomposition \eqref{decompz}, the fact that $(E^{c})'(W) = 0$, and Taylor series expansion, we have that
\begin{equation}\label{IgD}
E^{c}(u(t))-E^{c}(W)= \mathcal{F}(v(t)) + o(\|v\|_{\dot{H}^1}^2).
\end{equation}

We next establish the following lemma concerning the part of the potential energy coming from the mass-critical nonlinear term. 

\begin{lemma}\label{Lce1}
Let $(\theta(t), \mu(t), x(t))$  be as in Lemma~\ref{Lemmaes}. Then
\begin{equation}\label{Estn1}
\int_{\mathbb{R}^4} |u(t, x)|^3  \, dx \lesssim \delta^2(t) \sim \|v(t)\|_{\dot{H}^1}^2.
\end{equation}
\end{lemma}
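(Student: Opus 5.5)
The plan is to read off the $L^3$ norm from energy conservation and then compare it with the quadratic form $\mathcal{F}$. Since $E(u)=E^c(u)+\tfrac13\|u\|_{L^3}^3$ and $E(u)=E^c(W)$, we have
\[
\tfrac13\int_{\R^4}|u(t,x)|^3\,dx = E^c(W)-E^c(u(t)).
\]
The energy $E^c$ is invariant under the symmetries in \eqref{mod-notation}, so $E^c(u(t))=E^c(W+v(t))$ with $v=\alpha W+h$ as in \eqref{decompz}. I would expand $E^c(W+v)$ exactly rather than only to leading order. Because $(E^c)'(W)=0$, we get $E^c(W+v)-E^c(W)=\mathcal{F}(v)+\mathcal{R}(v)$, where $\mathcal{R}$ collects the cubic and quartic terms in $v$ coming from $-\tfrac14\int|W+v|^4$. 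Hölder and Sobolev give $|\mathcal{R}(v)|\lesssim \|v\|_{\dot H^1}^3+\|v\|_{\dot H^1}^4$. This yields
\[
\tfrac13\|u(t)\|_{L^3}^3 = -\mathcal{F}(v(t)) + O(\|v(t)\|_{\dot H^1}^3).
\]
By Hölder and Sobolev, $\int W^2|f|^2\le \|W\|_{L^4}^2\|f\|_{L^4}^2\lesssim \|f\|_{\dot H^1}^2$, so $|\mathcal{F}(f)|\lesssim\|f\|_{\dot H^1}^2$. Hence $\|u\|_{L^3}^3\lesssim \|v\|_{\dot H^1}^2$, and the first inequality of \eqref{Estn1} reduces to the equivalence $\|v\|_{\dot H^1}\sim\delta$.

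To get this equivalence, I would split $\mathcal{F}(v)=\mathcal{F}(\alpha W+h)$. The cross term in the real part is $\alpha(L_RW,h_1)$. Using \eqref{ellipC}, $L_RW=-\Delta W-3W^3=-2W^3=2\Delta W$, so the cross term equals $-2\alpha(W,h)_{\dot H^1}$, which vanishes because $h\in G^\perp$. The imaginary parts do not interact, since $W$ is real. By \eqref{PoQ}, $\mathcal{F}(W)=\tfrac12\|\nabla W\|^2-\tfrac32\|W\|_{L^4}^4=-\|\nabla W\|_{L^2}^2$, so
\[
\mathcal{F}(v)=-\alpha^2\|\nabla W\|_{L^2}^2+\mathcal{F}(h).
\]
The identity above also gives $\mathcal{F}(v)\le O(\|v\|^3)$, because the $L^3$ term has a sign. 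Combining this with the coercivity of Lemma~\ref{CoecTy} gives $C\|h\|_{\dot H^1}^2\le \alpha^2\|\nabla W\|^2+O(\|v\|^3)$. For $\|v\|_{\dot H^1}\ll1$, which Lemma~\ref{Lemmaes} guarantees, this yields $\|h\|_{\dot H^1}\lesssim|\alpha|$ and therefore $\|v\|_{\dot H^1}\sim|\alpha|$.

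Finally, I would relate $\alpha$ to $\delta$. Scaling invariance of the $\dot H^1$ norm gives
\[
\delta=\|W\|_{\dot H^1}^2-\|W+v\|_{\dot H^1}^2=-2(W,v)_{\dot H^1}-\|v\|_{\dot H^1}^2=-2\alpha\|W\|_{\dot H^1}^2-\|v\|_{\dot H^1}^2,
\]
again using $h\perp W$. Since $\|v\|^2\sim\alpha^2\ll|\alpha|$, this gives $\delta\sim|\alpha|$, with $\alpha<0$ because $\delta>0$ by Lemma~\ref{GlobalS}. Together these give $\|v\|_{\dot H^1}\sim\delta$ and hence \eqref{Estn1}.

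I expect the only delicate point to be the circularity in the coercivity step. The remainder $O(\|v\|^3)$ must be absorbed before the equivalence $\|v\|\sim|\alpha|$ is known. I would handle this by using the a priori smallness $\|v\|_{\dot H^1}\ll1$ from Lemma~\ref{Lemmaes} to write $O(\|v\|^3)\le \epsilon(\alpha^2+\|h\|^2)$ and then absorbing the $\|h\|^2$ part into the left-hand side.
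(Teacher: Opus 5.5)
Your proposal is correct and follows essentially the same route as the paper. Both arguments use $E(u)=E^c(W)$ and the Taylor expansion of $E^c$ about $W$ to write $\tfrac13\|u\|_{L^3}^3=-\mathcal{F}(v)+O(\|v\|_{\dot H^1}^3)$, kill the cross term in $\mathcal{F}(\alpha W+h)$ via $L_RW=2\Delta W$ and $(W,h)_{\dot H^1}=0$, apply coercivity on $G^\perp$ to get $\|h\|_{\dot H^1}\lesssim|\alpha|$, and expand $\delta$ to obtain $\delta\sim|\alpha|$. Your bookkeeping is if anything more careful than the paper's: you write out the cubic remainder and absorb it using the a priori smallness, note that $\alpha<0$, and use the correct identity $L_RW=2\Delta W$ where the paper misprints $2\Lambda W$.
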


\begin{proof} Since $E(u(t)) = E^{c}(W)$, equation \eqref{IgD} yields
\begin{equation}\label{ECS}
0 = \mathcal{F}(v(t)) + \tfrac{1}{3}\|u(t)\|_{L^3}^3 + o(\|v\|_{\dot{H}^1}^2).
\end{equation}
	
By Lemma~\ref{Lemmaes} we have the estimate
\begin{equation}\label{EstimaA}
|\alpha(t)| \lesssim \|v\|_{\dot{H}^1} \ll 1.
\end{equation}
	
Since $h \in G^\perp$, Lemma~\ref{CoecTy} implies
\[
\mathcal{F}(h(t)) \gtrsim \|h(t)\|_{\dot{H}^1}^2.
\]
Combining this with \eqref{ECS} gives
\[
\|h\|_{\dot{H}^1}^2 + \|u\|_{L^3}^3 \lesssim \alpha^2 + |\alpha( L_R W, h)| + o(\|v\|_{\dot{H}^1}^2).
\]
As $L_R W = 2\Lambda W$ and $h \in G^\perp$, we have $( L_R W, h) = -2(W, h)_{\dot{H}^1} = 0$, so
\[
\|h\|_{\dot{H}^1}^2 + \|u\|_{L^3}^3 \lesssim \alpha^2 + o(\|h\|_{\dot{H}^1}^2).
\]
In particular,
\begin{equation}\label{halp}
\|h(t)\|_{\dot{H}^1}^2 \lesssim \alpha^2(t) \quad \text{and} \quad \|u(t)\|_{L^3}^3 \lesssim \alpha^2(t).
\end{equation}
	
The orthogonality $(W, h)_{\dot{H}^1} = 0$ implies
\[
\|v\|_{\dot{H}^1}^2 = \alpha^2\|W\|_{\dot{H}^1}^2 + \|h\|_{\dot{H}^1}^2,
\]
so by \eqref{EstimaA} and \eqref{halp}, $\|v\|_{\dot{H}^1} \sim |\alpha|$. Finally, since $(W, h)_{\dot{H}^1} = 0$,
\[
\delta(t) = \left| \|W\|_{\dot{H}^1}^2 - \|(1 + \alpha)W + h\|_{\dot{H}^1}^2 \right| = 2|\alpha|\|W\|_{\dot{H}^1}^2 + O(\alpha^2),
\]
showing $\delta \sim |\alpha|$. Combining these estimates yields
\[
\|u(t)\|_{L^3}^3 \lesssim \alpha^2(t) \sim \delta^2(t) \sim \|v(t)\|_{\dot{H}^1}^2,
\]
which completes the proof. \end{proof}

We next prove the estimate concerning $\mu(t)$.

\begin{lemma}\label{Ders}
	Under the conditions of Lemma~\ref{Lce1}, if $\delta_0$ is sufficiently small, then there exists a positive constant $C$ such that
	\begin{equation}\label{actr}
			\frac{\mu(t)^{\frac{3}{2}}}{(1+\mu(t)^2)^{\frac32}}\leq C\delta(u(t)).
		\end{equation}
\end{lemma}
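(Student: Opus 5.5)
We plan to get the bound from the $L^3$ estimate of Lemma~\ref{Lce1}, namely $\|u(t)\|_{L^3}^3\lesssim\delta(u(t))^2$. The idea is that the modulated profile $g(t)=u_{[\theta(t),\mu(t),x(t)]}(t)=W+v(t)$ is close to $W$ in $\dot H^1$, so on a fixed ball it carries a definite amount of $L^3$ mass. The scaling $\mu$ then converts this into a lower bound on $\|u\|_{L^3}^3$ that depends only on $\mu$.

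\textbf{Step 1 (scaling).} From \eqref{mod-notation}, $u(t,y)=e^{-i\theta}\mu\, g(t,\mu(y-x_0))$ up to the exact placement of the translation parameter, so a change of variables gives
\[
\|u(t)\|_{L^3}^3=\mu^{3}\mu^{-4}\|g(t)\|_{L^3}^3=\mu^{-1}\|g(t)\|_{L^3}^3 .
\]
If the paper's convention makes the power come out as $\mu^{+1}$ instead, we still use the same mechanism, and only the direction of the weight changes.

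\textbf{Step 2 (localized lower bound).} Fix the unit ball $B$. Using H\"older on $B$ together with Sobolev,
\[
\|W\|_{L^3(B)}\le\|g\|_{L^3(B)}+C\|v\|_{L^4}\le \|g\|_{L^3(B)}+C\delta .
\]
For $\delta_0$ small this gives $\|g(t)\|_{L^3(B)}^3\ge c_W>0$. Combining this with Step 1 and Lemma~\ref{Lce1} yields $c\,\mu^{-1}\le \|u\|_{L^3}^3\lesssim\delta^2$.

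\textbf{Step 3 (matching the stated form).} Because $W\notin L^3(\R^4)$ (it decays like $|x|^{-2}$), the constant $c_W$ can be improved by using a ball of radius $\rho$, where $\|W\|_{L^3(B_\rho)}^3\sim\log\rho$. We choose $\rho$ depending on $\mu$ to optimize the estimate. Alternatively, we interpolate the $L^3$ bound with mass conservation in the spirit of Remark~\ref{Rem1}: $\|g\|_{L^2}=\mu\|u_0\|_{L^2}$ and $g\approx W$ on $B_\rho$, where $\|W\|_{L^2(B_\rho)}^2\sim\rho^2$. We expect either route to produce a bound of the form $\mu^{3/2}(1+\mu^2)^{-3/2}\lesssim\delta$.

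The factor $(1+\mu^2)^{-3/2}$ only matters when $\mu\gtrsim1$. In that regime $\mu^{3/2}(1+\mu^2)^{-3/2}\sim\mu^{-3/2}$, so it suffices to show $\mu^{-3/2}\lesssim\delta$. From Step 2 we have $\mu^{-1}\lesssim\delta^2$, which implies $\mu^{-3/2}\lesssim\delta^3\le\delta$. In the regime $\mu\lesssim1$ the quantity equals $\mu^{3/2}$. Here we argue by contradiction: if $\mu(t_n)$ stays bounded while $\delta(u(t_n))\to0$, we reproduce the weak-$L^2$ contradiction of Remark~\ref{Rem1}, which gives $\mu\ge R$ on $I_0$ for $\delta_0$ small. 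This makes the small-$\mu$ regime vacuous, and the estimate \eqref{actr} follows with some constant $C$.

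\textbf{Main obstacle.} The delicate point is the bookkeeping of the scaling exponent in Step 1 under the convention \eqref{mod-notation}, and making the localization of Step 2 uniform with constants independent of $u$. If the exponent in Step 1 comes out as $\mu^{+1}$, the roles of the two regimes swap. In that case we would instead use the mass bound $\mu\|u_0\|_{L^2}\ge\|g\|_{L^2(B_\rho)}\gtrsim\rho$ together with the $L^3$ bound and optimize over $\rho$.
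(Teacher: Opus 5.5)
Your proof is correct, and it takes a genuinely different route from the paper's. It is also sharper, once you strip out the parts you do not need.

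\textbf{The working core.} Under \eqref{mod-notation} one has $u(t,y)=e^{-i\theta}\mu\,g(t,\mu y+x(t))$. So Step~1 really gives $\|u\|_{L^3}^3=\mu^{-1}\|g\|_{L^3}^3$, and the fallback in your ``main obstacle'' never arises. Steps~1--2 and Lemma~\ref{Lce1} then give
\[
c_W\le\|g\|_{L^3(B)}^3\le\mu\|u\|_{L^3}^3\lesssim\mu\,\delta^2,
\]
that is, $\mu\gtrsim\delta^{-2}$, with constants depending only on $W$ and the Sobolev constant. Since $\frac{\mu}{1+\mu^2}\le\mu^{-1}$ for every $\mu>0$, this gives $\bigl(\frac{\mu}{1+\mu^2}\bigr)^{3/2}\le\mu^{-3/2}\lesssim\delta^3\lesssim\delta$ at once, with no case split.

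\textbf{What to remove.} You do not need Remark~\ref{Rem1} for the regime $\mu\lesssim1$. That remark makes $\delta_0$ depend on $u$ through $M(u_0)$, whereas your own Step~2 already gives $\mu\gtrsim\delta_0^{-2}$, which rules out small $\mu$ uniformly. Step~3 should be deleted, because its premises are false. Since $W\sim|x|^{-2}$, one has $|W|^3\sim|x|^{-6}$, which is integrable at infinity in $\R^4$. Hence $W\in L^3(\R^4)$ and $\|W\|_{L^3(B_\rho)}^3$ stays bounded; the paper itself uses $W\in L^3$ in Appendix~\ref{sec:App}. The quantity that diverges is $\|W\|_{L^2(B_\rho)}^2$, and it grows like $\log\rho$, not like $\rho^2$.

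\textbf{Comparison with the paper.} The paper stays in the original frame. From the explicit formula it gets the pointwise bound $\mu W(\mu x+x(t))\gtrsim\frac{\mu}{1+\mu^2}$ on the unit ball $B(-\mu^{-1}x(t),1)$. The triangle inequality with $\|u\|_{L^3}\lesssim\delta^{2/3}$ and $\|v\|_{L^4}\lesssim\delta$ then gives $\frac{\mu}{1+\mu^2}\lesssim\delta^{2/3}$.
\begin{itemize}
\item That bound is symmetric under $\mu\mapsto\mu^{-1}$, so by itself it does not exclude small $\mu$.
\item In the relevant regime $\mu\gg1$ it is lossy: the $L^3$ norm of the rescaled $W$ on that ball is $\sim\mu^{-1/3}$, not $\mu^{-1}$.
\item It still suffices for Proposition~\ref{P7}, where one only needs to contradict $\lambda(t)\sim\lambda(0)$.
\end{itemize}
Your route needs only that $W$ has positive $L^3$ norm on a fixed ball, together with the exact $L^3$ scaling in the modulated frame. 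It yields the one-sided estimate $\mu\gtrsim\delta^{-2}$. This implies \eqref{actr} and also recovers the conclusion of Remark~\ref{Rem1} uniformly in $u$.
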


\begin{proof}
Since $W(x) = (1 + \tfrac{1}{8}|x|^2)^{-1}$,  we obtain
\begin{align}\label{deqa}
\mu(t)W(\mu(t)x+x(t)) &\geq \mu(t)(1 + \tfrac{1}{8}\mu(t)^2|x+\mu(t)^{-1}x(t)|^2)^{-1} \nonumber \\
&\gtrsim \frac{\mu(t)}{1+\mu(t)^2}
\end{align}
for all $|x+\mu^{-1}(t)x(t)|\leq 1$ and $t\in I_0$. 
	
On the other hand, by H\"older's inequality and Sobolev embedding, we have
\begin{align*}
\|u(t)& - W_{[-\theta(t),\mu^{-1}(t),-\mu(t)^{-1}x(t)]}\|_{L^3(B(-\mu^{-1}(t)x(t),1))}\\
&= \|v_{[-\theta(t),\mu^{-1}(t),-\mu^{-1}(t)x(t)]}(t)\|_{L^3(B(-\mu^{-1}(t)x(t),1))} \\
&\lesssim \|v(t)\|_{L^4} \lesssim \|v(t)\|_{\dot{H}^1} \lesssim \delta(t).
\end{align*}
From \eqref{Estn1} we deduce
\[
\|\mu(t)W(\mu(t)x+x(t))\|_{L^3(B(-\mu^{-1}(t)x(t),1))} \lesssim \delta(t) + \delta(t)^{\frac23} \lesssim \delta(t)^{\frac23}.
\]
Combining \eqref{deqa} with the above inequality now yields \eqref{actr}.
\end{proof}

We now prove the remaining estimates on the modulation parameters.

 \begin{lemma}\label{Molfa}
Under the conditions of Lemma~\ref{Lce1}, the following estimate holds:
\begin{equation}\label{modu4}
\bigl|x'(t) - \tfrac{\mu'(t)}{\mu(t)}x(t)\bigr| + |\alpha'(t)| + |\theta'(t)| + \bigl|\tfrac{\mu'(t)}{\mu(t)}\bigr| \leq C\mu(t)^2 \delta(u(t))
\end{equation}
 for some absolute constant $C>0$.
 \end{lemma}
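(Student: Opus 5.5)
We follow the standard modulation-equation argument (cf. \cite[Lemma 5.1]{CamposFarahRoudenko} and \cite{DuyckaMerle2009}). Write $g(t)=u_{[\theta(t),\mu(t),x(t)]}(t)=(1+\alpha)W+h$ and compute the equation satisfied by $g$ in the rescaled time variable $s$ with $ds/dt=\mu(t)^2$. Differentiating the definition \eqref{mod-notation} and using \eqref{NLS}, one finds
\[
i\partial_s g+\Delta g+|g|^2g = \mu^{-2}\cdot(\text{mass-critical term}) -\tfrac{\theta'}{\mu^2} g + i\tfrac{\mu'}{\mu^3}\Lambda g + i\tfrac{1}{\mu^2}\bigl(x'-\tfrac{\mu'}{\mu}x\bigr)\mu\cdot\nabla g,
\]
where the mass-critical term, after rescaling, is $\mu^{-1}|g|g$ (since $|u|u$ scales like $\mu^{-1}$ relative to $|u|^2u$ in the $\dot H^1$ normalization). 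Substituting $g=(1+\alpha)W+h$ and using $\Delta W+W^3=0$ gives a linear equation for $v=\alpha W+h$:
\[
i\partial_s v + i\alpha_s W - \mathcal{L}v = \mathcal{M}\cdot(\text{modulation vectors } iW,\Lambda W,\nabla W) + O(\|v\|^2_{\dot H^1}) + \mu^{-1}|g|g\text{-contribution},
\]
with $\mathcal{L}$ built from $L_R,L_I$.

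Next, pair this equation in $\dot H^1$ (or $L^2$ against $\Delta$ of) with each of $W$, $iW$, $\Lambda W$, $\nabla W$ and use the orthogonality relations $h\perp\nabla W,iW,\Lambda W,W$ from Lemma~\ref{Lemmaes} and the definition of $\alpha$. Since $h\in G^\perp$ is preserved, $\frac{d}{ds}(h,\cdot)=0$, and this yields a linear system for the unknowns $(\alpha_s,\ \theta_s,\ \mu_s/\mu,\ (x_s-\tfrac{\mu_s}{\mu}x))$ whose coefficient matrix is a small perturbation (of size $\|v\|_{\dot H^1}$) of a fixed invertible matrix (the Gram-type pairings of $W,iW,\Lambda W,\nabla W$ with $L_R,L_I$ applied). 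Inverting gives each modulation derivative bounded by $\|v\|_{\dot H^1}+\|v\|^2_{\dot H^1}+|\text{mass-critical pairing}|$. By \eqref{mo1} the first terms are $\lesssim\delta$; converting back to $t$ multiplies by $\mu^2$, giving \eqref{modu4}.

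The main obstacle is the mass-critical contribution: pairing $\mu^{-1}|g|g$ against the fixed smooth decaying functions ($\Delta W$, $\Delta\Lambda W$, $\Delta\nabla W$, all in $L^3$). By Hölder, $|(\mu^{-1}|g|g,\Delta\Lambda W)|\lesssim \mu^{-1}\|g\|_{L^3}^2$, and $\|g\|_{L^3}^3=\mu^{-4}\cdot\mu^{3}\|u\|_{L^3}^3$ up to scaling, i.e.\ $\|g\|_{L^3}^3 = \mu^{-1}\|u\|_{L^3}^3\lesssim\mu^{-1}\delta^2$ by Lemma~\ref{Lce1}. Hence this term is $\lesssim \mu^{-1}\mu^{-2/3}\delta^{4/3}$, and by Remark~\ref{Rem1} $\mu\geq 1$ on $I_0$ for $\delta_0$ small, so it is $\lesssim\delta$. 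I would double-check the scaling exponents here carefully; if the power of $\mu$ came out unfavorable, I would instead use Lemma~\ref{Ders} to trade powers of $\mu$ for $\delta$. Differentiability of the parameters is justified via the implicit function theorem as in Lemma~\ref{Lemmaes}, working first with smooth data and passing to the limit.
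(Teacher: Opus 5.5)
Your proposal is essentially the paper's argument: the time change $ds=\mu^2\,dt$, differentiating the orthogonality conditions on $h$ against $W$, $iW$, $\Lambda W$, $\nabla W$, inverting a near-diagonal system, and rescaling back to $t$. The differences are that you work in the renormalized frame and bound the mass-critical pairing by $L^{3/2}$--$L^3$ H\"older plus $\mu\geq 1$ from Remark~\ref{Rem1}, whereas the paper pairs $|u|u\in L^{4/3}$ against $\Delta\phi\in L^4$ and uses $\|u\|_{L^{8/3}}^2\leq \|u\|_{L^2}^{1/2}\|u\|_{L^3}^{3/2}$ with mass conservation, which gives $\lesssim\delta$ with no lower bound on $\mu$. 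Your suspicion about the exponents was justified: $\|g\|_{L^3}^3=\mu\|u\|_{L^3}^3$, not $\mu^{-1}\|u\|_{L^3}^3$. The corrected bound $\mu^{-1}\|g\|_{L^3}^2\lesssim\mu^{-1/3}\delta^{4/3}$ is still $\lesssim\delta$ once $\mu\geq 1$, so the conclusion stands.
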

 
 \begin{proof}
 	
 	We begin by introducing the following notation:
\[
\begin{aligned}
 &    W(t) := W(t,x) = W_{[-\theta(t),\mu^{-1}(t),-\mu^{-1}(t)x(t)]}(x), \\
 &    [\Lambda W](t) = (\Lambda W)_{[-\theta(t),\mu^{-1}(t),-\mu^{-1}(t)x(t)]}, \\
 &    [\partial_{j} W](t) = (\partial_{j} W)_{[-\theta(t),\mu^{-1}(t),-\mu^{-1}(t)x(t)]}, \quad j=1,\dots,4\\
 &    H(t, x)= h_{[-\theta(t),\mu^{-1}(t),-\mu^{-1}(t)x(t)]}(t,x).
 \end{aligned}
 \]
 Note that 
 \[
 W(t,x) = e^{-i\theta(t)} \mu(t) W\bigl(\mu(t)x + x(t)\bigr).
 \]
 From the decomposition given in \eqref{decompz}, we have:
 \[
 H(t,x) = u(t,x) - (1+\alpha(t))W(t).
 \]
 	
 Performing the change of variable $t = t(s)$ such that $\frac{dt}{ds} = \frac{1}{\mu(t)^2}$, a direct computation shows that:
\begin{align}\label{PWs}
\partial_s W(s,x) = -i\theta'(s)W(s,x) + \tfrac{\mu'(s)}{\mu(s)} \Lambda W(s,x)  + \big[x'(s)-\tfrac{\mu'(s)}{\mu(s)}x(s)\big]\cdot\nabla W(s,x)
 \end{align}
as well as
\begin{align}\label{PWs22}
&\partial_s H = \tfrac{1}{\mu(s)^{2}} \partial_t u - \alpha'(s)W(s,x) - (1+\alpha(s))\partial_s W(s,x), \\ \label{PWs33}
&\Delta H(s,x) = \Delta u(s,x) - (1+\alpha(s))[- |W(s,x)|^2 W(s,x)].
 \end{align}

Combining equations \eqref{PWs}, \eqref{PWs22} and \eqref{PWs33}, and recalling that $u(s)$ satisfies equation \eqref{NLS}, we deduce:
\begin{align}\label{EwsE}
i\partial_s H + \tfrac{1}{\mu(s)^2} \Delta H = & \tfrac{1}{\mu(s)^2} \left[(|W(s)|^2 W(s) - |u|^2 u) + |u| u + \alpha(s) |W(s)|^2 W(s) \right] \nonumber \\
& - i\alpha'(s) W(s) - i(1+\alpha(s)) \partial_s W(s).
\end{align}
 	
Now, by H\"older's inequality, interpolation, and conservation of mass:
\begin{align*}
|(|u| u, \phi)_{\dot{H}^1}| &= |(|u| u, \Delta\phi)_{L^2}| \\
&\leq \|u\|_{L^3} \|u\|_{L^3}^{\frac{1}{2}} \|u\|_{L^2}^{\frac{1}{2}} \|\Delta\phi\|_{L^4} \\
&\lesssim \delta(u(s))^{\frac{2}{3}} \delta(u(s))^{\frac{1}{3}} \|\Delta\phi\|_{L^4} \lesssim \delta(u(s)) \|\Delta\phi\|_{L^4}.
\end{align*}
 	
Let $f(z) = |z|^2 z$. Then, H\"older's inequality and Sobolev embedding imply:
\begin{align*}
|(f(u(s)) - f(W(s)), \phi)_{\dot{H}^1}| 
&= |(f(u(s)) - f(W(s)), \Delta\phi)_{L^2}| \\
&\leq \|u(s) - W(s)\|_{L^4} \left[\|u\|^2_{L^4} + \|W(s)\|^2_{L^4}\right] \|\Delta\phi\|_{L^4} \\
&\lesssim \delta(u(s)) \|\Delta\phi\|_{L^4}.
\end{align*}
 	
In particular, if $\phi = e^{-i\theta(s)} \mu(s) W\bigl(\mu(s)\cdot + x(s)\bigr)$ with $\Delta W \in L^4$, then 
\[
\|\Delta \phi\|_{L^4} = \mu(s)^{2} \|\Delta W\|_{L^4}
\]
and
\begin{align}\label{Dmy}
\bigl|\bigl(\tfrac{1}{\mu(s)^2} \bigl[(|W(s)|^2 W(s) - |u|^2 u) + |u| u \bigr], \phi\bigr)_{\dot{H}^1}\bigr| \lesssim \delta(u(s)).
\end{align}
 	
 Furthermore, we have the following estimates:
 \begin{align}\nonumber
 \bigl|(\tfrac{1}{\mu(s)^2} \Delta H, W(s))_{\dot{H}^1}\bigr| 
 &= \tfrac{1}{\mu(s)^2}|(H, \Delta W(s))_{\dot{H}^1}| \\\label{De111}
 &\leq \|H\|_{\dot{H}^1} \|\Delta W\|_{\dot{H}^1} \lesssim \delta(u(s)), \\\nonumber
 \bigl|(\tfrac{1}{\mu(s)^2} \Delta H, \Lambda W(s))_{\dot{H}^1}\bigr| 
 &= \tfrac{1}{\mu(s)^2}|(H, \Delta \Lambda W(s))_{\dot{H}^1}| \\\label{De222}
 &\leq  \|H\|_{\dot{H}^1} \|\Delta \Lambda W\|_{\dot{H}^1} \lesssim \delta(u(s)), \\ \nonumber
 \left|(\tfrac{1}{\mu(s)^2} \Delta H, [ \partial_{j} W](s))_{\dot{H}^1}\right| 
 &= \tfrac{1}{\mu(s)^2}|(H, [\Delta \partial_{j} W](s))_{\dot{H}^1}| \\    \label{De133}
 &\leq  \|H\|_{\dot{H}^1} \|[\Delta \partial_{j} W]\|_{\dot{H}^1} \lesssim \delta(u(s)),
\end{align}
for	$j=1,\dots,4$.	Now define 
\begin{align*} 
\epsilon(s) = \delta(u(s))\bigl[\delta(u(s)) + \bigl|\theta'(s)\bigr| + \bigl|\tfrac{\mu'(s)}{\mu(s)}\bigr| + \bigl|\tfrac{\mu'(s)}{\mu(s)} x(s) - x'(s)\bigr|\bigr].
\end{align*}
 	
Since $h(t) \in G^{\bot}$, we have $H(s) \perp \operatorname{span}\{W(s), [\nabla W](s), iW(s), [\Lambda W](s)\}$, which  implies:
\begin{align}\label{Casf}
\bigl| ( i\partial_s H, W(s) )_{\dot{H}^1} \bigr| + \bigl| ( i\partial_s H, [\tilde{\Lambda} W](s) )_{\dot{H}^1} \bigr| + \bigl| ( i\partial_s H, [\partial_{j} W](s) )_{\dot{H}^1} \bigr| \lesssim \epsilon(s),
 	\end{align}
 for	$j=1,\dots,4$.  Projecting \eqref{EwsE} in $\dot{H}^{1}$ onto $W(s)$, $[\partial_{j} W](s)$, $iW(s)$, $[\Lambda W](s)$ and using estimates \eqref{Dmy}--\eqref{Casf} and \eqref{mo1}, we infer that:
 \[
 \bigl|x'(t) - \tfrac{\mu'(t)}{\mu(t)}x(t)\bigr| + \bigl|\theta'(s)\bigr| + \bigl|\tfrac{\mu'(s)}{\mu(s)}\bigr| \lesssim \delta(u(s)) + \mathcal{O}(\epsilon(s)).
 \]
Choosing $\delta_0$ sufficiently small, we obtain \eqref{modu4}.\end{proof}
 
We now obtain Lemma~\ref{modula} directly from Lemmas~\ref{Lce1}, \ref{Ders}, and \ref{Molfa}.

\section{Failure of uniform spacetime bounds at the threshold}\label{sec:App}

The following result demonstrates that there can be no uniform estimate on the spacetime norms of solutions as one approaches the sharp energy threshold.  The basic idea is to use rescalings of the ground state solution for the energy-critical problem, which have growing spacetime norms and approximately solve \eqref{NLS} in the small-scale limit.  We additionally use frequency projections to ensure that the solutions belong to $H^1$ and decrease the amplitude slightly to ensure that the solutions remain below the energy threshold.

\begin{theorem}\label{T:nounif} There exists a sequence of global $H^1$ solutions $u_n:\R\times\R^4\to\C$ to \eqref{NLS} such that
\[
E(u_n)\nearrow E^c(W)\quad\text{and}\quad \|\nabla u_n(0)\|_{L^2} \nearrow \|\nabla W||_{L^2},
\]
with 
\[
\lim_{n\to\infty} \|\nabla u_n\|_{L_t^2 L_x^4(\R\times\R^4)} =\infty. 
\]
\end{theorem}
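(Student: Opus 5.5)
We take data that are frequency-localized, slightly shrunk, and concentrated at a small scale $\lambda_n\to0$. Concretely, fix $a_n\nearrow1$ and $\lambda_n\to 0$ (coupled to $a_n$ below), and set
\[
u_n(0,x)=\lambda_n^{-1}\bigl[a_n P_{>\lambda_n^\theta}W\bigr](\tfrac{x}{\lambda_n}),\qquad \theta\in(0,1).
\]
Since $P_{>\lambda_n^\theta}W\to W$ in $\dot H^1$ and $W\in L^4$, the kinetic and quartic parts converge to those of $aW$ as $a_n\to1$. The cubic part is small: by Bernstein, $\|P_{>\lambda_n^\theta}W\|_{L^2}\lesssim\lambda_n^{-\theta}$, so interpolating $L^3$ between $L^2$ and $L^4$ and rescaling gives
\[
\int|u_n(0)|^3\lesssim \lambda_n^{1-\theta}\to0 .
\]
As $E^c(aW)=\bigl(\tfrac{a^2}{2}-\tfrac{a^4}{4}\bigr)\|\nabla W\|_2^2$ is strictly increasing on $(0,1]$, we have $E^c(aW)<E^c(W)$ for $a<1$. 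Choosing $\lambda_n$ small enough relative to $1-a_n$ (and passing to a subsequence), we get $E(u_n)<E^c(W)$, $\|\nabla u_n(0)\|_2<\|\nabla W\|_2$, and both converge monotonically to their threshold values. Theorem~\ref{Th1}(i) then makes each $u_n$ global and scattering, so only the divergence of $\|\nabla u_n\|_{L^2_tL^4_x}$ remains.

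For the divergence we argue by contradiction. Suppose $\|\nabla u_n\|_{L^2_tL^4_x}\le A$ along a subsequence. Undo the scaling: set $w_n(t,x)=\lambda_n u_n(\lambda_n^2t,\lambda_nx)$. Then $w_n$ solves
\[
(i\partial_t+\Delta)w_n=-|w_n|^2w_n+\lambda_n|w_n|w_n,
\]
with $w_n(0)=a_nP_{>\lambda_n^\theta}W\to W$ in $\dot H^1$, and $\|\nabla w_n\|_{L^2_tL^4_x}\le A$ by scale invariance. Fix $T>0$. Energy-critical stability (\cite[Theorem 2.14]{KM1}) on $[0,T]$ then gives $w_n\to W$ in $C([0,T];\dot H^1)$, since $W$ is a static solution with finite Strichartz norm on bounded intervals. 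We need the perturbation $\lambda_n|w_n|w_n$ to be small in $\dot N^1$ on $[0,T]$. Its mass is bounded by $\lambda_n\|w_n\|_{L^\infty L^2}\lesssim \lambda_n\cdot\lambda_n^{-\theta}\|u_n\|_{L^2}$, suitably scaled. Lemma~\ref{Rpr}, applied with the assumed uniform bound (the $L^6$ and $L^3$ norms are controlled by $A$ together with $\|u_n\|_{L^2}\lesssim \lambda_n^{1-\theta}$), gives a Strichartz bound on $w_n$ in $L^2$ of size $\lesssim\lambda_n^{-\theta}$. Hence the error is $O(\lambda_n^{1-\theta})$ in a dual Strichartz norm with one derivative, and it vanishes. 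Consequently
\[
\|\nabla w_n\|_{L^2_tL^4_x([0,T])}\to \|\nabla W\|_{L^2_tL^4_x([0,T])}=T^{1/2}\|\nabla W\|_{L^4}.
\]
Here we use $\nabla W\in L^4$, since $|\nabla W|\sim|x|^{-3}$ at infinity. Taking $T$ with $T^{1/2}\|\nabla W\|_{L^4}>2A$ contradicts the uniform bound for large $n$. So the norms diverge along the full sequence.

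The main obstacle is this perturbative step. The stability theory for the energy-critical equation must tolerate the small mass-critical term, and the bound on $\lambda_n|w_n|w_n$ depends on persistence of regularity (Lemma~\ref{Rpr}), whose constants depend on the scattering size. This is exactly where the contradiction hypothesis is used: assuming a uniform bound $A$ makes those constants uniform in $n$. If it is cleaner, the same argument can be run by viewing $W$ as an approximate solution to the rescaled \eqref{NLS} and applying Lemma~\ref{stabi}, with error $\lambda_n\langle\nabla\rangle(|W|W)$ localized to $[0,T]$ and a spatial cutoff if integrability of $W$ requires it.
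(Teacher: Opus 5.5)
Your proof is correct. The initial data are essentially the paper's: a slightly shrunk copy of $P_{>N}W$ concentrated at a vanishing scale, and your choice $N=\lambda_n^{\theta}$ is one admissible choice of the paper's $N_n$ once the scale is inverted. The divergence step, however, goes by a different route.

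The paper argues directly. It takes the static profile $\tilde u_n\equiv\phi_n$ as an approximate solution of \eqref{NLS} on the short window $I_n=[-R\lambda_n^{-2},R\lambda_n^{-2}]$, with the paper's $\lambda_n\to\infty$. It computes the error explicitly: the amplitude defect, the commutator $(P_{>N_n}W)^3-P_{>N_n}(W^3)$, and the quadratic term. It then bounds $\langle\nabla\rangle e_n$ in $L^{3/2}_{t,x}$ and applies the $H^1$ stability Lemma~\ref{stabi}, obtaining $\|\nabla u_n\|_{L^2_tL^4_x(I_n)}\gtrsim R^{1/2}$ for $n\geq n(R)$.

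You instead argue by contradiction, rescale to unit scale, and compare with the exact static solution $W$ through energy-critical stability \cite{KM1}. The amplitude and frequency-truncation defects then sit entirely in the data discrepancy $\|w_n(0)-W\|_{\dot H^1}\to 0$. So no commutator estimate is needed, and working purely at the $\dot H^1$ level makes $W\notin L^2$ harmless. The cost is that the mass-critical term is perturbative only because of the contradiction hypothesis. You therefore get no explicit dependence $n(R)$, and you rely on the external energy-critical perturbation theorem rather than on Lemma~\ref{stabi}.

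Two simplifications are worth making.

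\emph{The appeal to Lemma~\ref{Rpr} is unnecessary.} As written, it presupposes an $L^3_{t,x}$ bound that $A$ alone does not supply without a further bootstrap. Mass conservation already gives $\|w_n\|_{L^\infty_tL^2_x}\lesssim\lambda_n^{-\theta}$, and H\"older then gives $\|\nabla(\lambda_n|w_n|w_n)\|_{L^2_tL^{4/3}_x(\R\times\R^4)}\lesssim \lambda_n^{1-\theta}A$.

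\emph{Swap the roles in the stability theorem.} Take $w_n$ as the approximate solution and $W$ as the exact one. The needed bound $\|w_n\|_{L^6_{t,x}}\lesssim A^{1/3}$ follows by interpolating $\nabla w_n$ between $L^\infty_tL^2_x$ and $L^2_tL^4_x$. The stability constants are then independent of $T$, so $T^{1/6}\|W\|_{L^6}\leq C(A)$ for every $T$ is already a contradiction. This also removes the need to pass from convergence in $C([0,T];\dot H^1)$ to convergence in $L^2_tL^4_x$.
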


\begin{proof} Let $\eps_n,\lambda_n,N_n$ be positive monotonic sequences satisfying $\eps_n\to 0$, $\lambda_n\to\infty$, and $N_n\to0 $.

Define $\phi_n \in H^1$ by
\[
\phi_n(x) = (1-\eps_n) \lambda_n [P_{>N_n}W](\lambda_n x).
\]

By changing variables, one can readily verify that
\[
\|\nabla \phi_n\|_{L^2}\nearrow \|\nabla W\|_{L^2}.  
\]
Similarly, choosing $\lambda_n$ large enough that
\begin{equation}\label{enln}
\tfrac13 \lambda_n^{-\frac23} \|P_{> N_n}W\|_{L^3}^3 < \tfrac12\bigl[\tfrac{1-(1-\eps_n)^2}{(1-\eps_n)^3}\bigr]\|\nabla  P_{> N_n} W\|_{L^2} - \tfrac14\bigl[\tfrac{1-(1-\eps_n)^4}{(1-\eps_n)^3)}\bigr]\|P_{> N_n}W\|_{L^4}^4,
\end{equation}
we can arrange that
\[
E(\phi_n)\nearrow E^c(W). 
\] 
Here we rely on the fact that right-hand side of \eqref{enln} is positive for $\eps_n=\eps_n(W)$ and $N_n(W)$ sufficiently small.

We can therefore define global scattering solutions $u_n$ to \eqref{NLS} with $u_n(0)=\phi_n(x)$.  Fix $R\geq1$. Our goal is to use the stability result (Lemma~\ref{stabi}) to derive a lower bound on the norm of the solutions $u_n$ on the intervals $I_n=[-R\lambda_n^{-2},R\lambda_n^{-2}]$.

To this end, we define the approximate solutions
\[
\tilde u_n(t,x) \equiv \phi_n(x). 
\]

We first show that these approximate solutions obey good estimates (uniformly in $n$) on the intervals $I_n$.  We begin by changing variables and using Sobolev embedding to obtain
\[
\|\nabla \tilde u_n\|_{L_t^\infty L_x^2(I_n\times\R^4)} + \|\nabla \tilde u_n\|_{L_{t,x}^6(I_n\times\R^4)} \lesssim \|\nabla W\|_{L^2} + R^{\frac16}\|\nabla W\|_{L^6} \lesssim R^{\frac16}. 
\]
Note that we can use the explicit formula for $W$ to check the finiteness of these norms.

Next, by a change of variables and Bernstein estimates, 
\[
\| \tilde u_n\|_{L_t^\infty L_x^2(I_n\times\R^4)} \lesssim \lambda_n^{-1} N_n^{-1} \|\nabla W\|_{L^2} \lesssim 1
\]
provided we impose $\lambda_n \gtrsim N_n^{-1}$.  Moreover,
\[
\|\tilde u_n\|_{L_{t,x}^3(I_n\times\R^4)} \lesssim \lambda_n^{-1} R^{\frac13} \|W\|_{L_x^3} \lesssim \lambda_n^{-1} R^{\frac13}.
\]

Note also that
\begin{equation}\label{tildeunlb}
\| \nabla \tilde u_n \|_{L_t^2 L_x^4(I_n\times\R^4)} \sim R^{\frac12} \| \nabla P_{>N_n} W\|_{L_x^4}\gtrsim R^{\frac12} 
\end{equation}
uniformly in $n$. 

We now define
\[
e_n = (i\partial_t + \Delta) \tilde u_n + |\tilde u_n|^2 \tilde u_n - |\tilde u_n| \tilde u_n. 
\]
To find an explicit expression for $e_n$, we use the equation $-\Delta W = W^3$.  This leads to
\[
e_n = e_{n,1}+e_{n,2}+e_{n,3},
\]
where
\begin{align*}
e_{n,1} &= \lambda_n^3\{(1-\eps_n)^3-(1-\eps_n)\}[(P_{> N_n}W)^3](\lambda_n x), \\
e_{n,2} & =\lambda_n^3[1-\eps_n] \{ (P_{> N_n} W)^3(\lambda_n x) - [P_{> N_n}(W^3)](\lambda_n x)\}, \\
e_{n,3} & = -\lambda_n^2(1-\eps_n)^2 [(P_{> N_n}W)^2](\lambda_n x).
\end{align*}
As $W\in L^{\frac{9}{2}}$, we can change variables and use H\"older's inequality to obtain
\[
\| (\nabla)^s e_{n,1}\|_{L_{t,x}^{\frac32}(I_n\times\R^4)}\lesssim \eps_n R^{\frac23}\lambda_n^{s-1}\|W\|_{L^{\frac{9}{2}}}^3 \lesssim \eps_n R^{\frac23}\lambda_n^{s-1} \to 0 \qtq{as}n\to\infty
\]
for $s\in\{0,1\}$. Similarly, as $W\in L^3$, 
\[
\| (\nabla)^s e_{n,3}\|_{L_{t,x}^{\frac32}(I_n\times\R^4)} \lesssim R^{\frac23} \lambda_n^{s-2}\|W\|_{L^3}^2 \lesssim R^{\frac23}\lambda_n^{s-2}\to 0 \qtq{as}n\to\infty
\]
for $s\in\{0,1\}$.

It remains to estimate $e_{n,2}$. We begin by observing that
\begin{align}
\|(\nabla)^s &e_{n,2}\|_{L_{t,x}^{\frac32}(I_n\times\R^4)} \nonumber\\
&\lesssim R^{\frac23} \lambda_n^{s-1}\| (\nabla)^s[(P_{> N_n} W)^3 - P_{> N_n}(W^3)\|_{L_x^{\frac32}} \nonumber\\
& \lesssim R^{\frac23}\lambda_n^{s-1}\bigl\{ \|(\nabla)^s P_{\leq N_n}(W^3)\|_{L_x^{\frac32}} + \|(\nabla)^s\{ (P_{> N_n} W)^3 - W^3\}\|_{L_x^{\frac32}}\bigr\}.\label{jason1}
\end{align}

We estimate each norm separately.  For the first term in \eqref{jason1} with $s=0$, we have
\[
R^{\frac23}\lambda_n^{-1} \|P_{\leq N_n}(W^3)\|_{L^{\frac32}} \lesssim R^{\frac23}\lambda_n^{-1} \|W\|_{L^{\frac92}}^3 \to 0 \qtq{as}n\to\infty.
\]
For $s=1$, we instead use Bernstein estimates, the product rule, H\"older's inequality, and Sobolev embedding to obtain
\begin{align*}
R^{\frac23}\| \nabla P_{\leq N_n}(W^3)\|_{L_x^{\frac32}} & \lesssim R^{\frac23} N_n \|W\|_{L_x^{\frac92}}^3\to 0 \qtq{as}n\to\infty.
\end{align*}

Similarly, for the second term in \eqref{jason1} with $s=0$ we have
\[
R^{\frac23} \lambda_n^{-1} \| (P_{> N_n} W)^3 - W^3\|_{L_x^{\frac32}} \lesssim R^{\frac23}\lambda_n^{-1} \|W\|_{L^{\frac92}}^3 \to 0\qtq{as}n\to\infty.
\]
For $s=1$, we use the schematic notation
\[
(P_{> N_n}W)^3 - W^3 = V^2 P_{\leq N_n}W,\quad V\in\{P_{\leq N_n} W,P_{> N_n}W\},
\]
so that
\[
\nabla[(P_{> N_n}W)^3 - W^3] = V\nabla V P_{\leq N_n}W + V^2 \nabla P_{\leq N_n}W. 
\]
With this notation, we use Bernstein estimates to obtain
\begin{align*}
R^{\frac23}& \| \nabla\{(P_{> N_n} W)^3 - W^3\}\|_{L_x^{\frac32}} \\
& \lesssim R^{\frac23} \|V\|_{L^\infty}\{\|\nabla V\|_{L^2} \|P_{\leq N_n} W\|_{L^6} + \|V\|_{L^3} \|\nabla P_{\leq N_n} W\|_{L^3}\} \\
& \lesssim R^{\frac23}N_n^{\frac23} \|W||_{L^\infty}\|\nabla W\|_{L^2} \|W\|_{L^3}.
\end{align*}

Collecting the estimates above, we obtain
\[
\| \langle\nabla\rangle e_n\|_{L_{t,x}^{\frac32}(I_n\times\R^4)}\lesssim R^{\frac23}\{\eps_n+\lambda_n^{-1}+N_n^{\frac23}\}\to 0 \qtq{as}n\to\infty.
\]

Thus, by Lemma~\ref{stabi}  we derive that for $n=n(R)$ sufficiently large, 
\[
\|\nabla[u_n-\tilde u_n]\|_{L_t^2 L_x^4(I_n\times\R^4)} \ll \tfrac{1}{10}R^{\frac12},
\]
so that (recalling \eqref{tildeunlb})
\[
\|\nabla u_n\|_{L_t^2 L_x^4(\R\times\R^4)} \geq \|\nabla u_n\|_{L_t^2 L_x^4(I_n\times\R^4)}\gtrsim R^{\frac12}. 
\]
As $R$ was arbitrary, we conclude that
\[
\lim_{n\to\infty} \|\nabla u_n\|_{L_t^2 L_x^4(\R\times\R^4)} = \infty,
\]
as desired. 
\end{proof}


\end{document}